\providecommand{\U}[1]{\protect\rule{.1in}{.1in}}
\newtheorem{theorem}{Theorem}
\newtheorem{lemma}[theorem]{Lemma}
\newtheorem{proposition}[theorem]{Proposition}
\newenvironment{proof}[1][Proof]{\noindent\textbf{#1.} }{\ \rule{0.5em}{0.5em}}
\begin{document}

\title{Construction of Toda flow\\via Sato-Segal-Wilson theory}
\author{Shuo Zhang$^{1}$, \ Shinichi Kotani$^{2}$, \ Jiahao Xu$^{3}$ \ }
\date{}
\maketitle

\begin{abstract}
A Toda flow is constructed on a space of bounded initial data through
Sato-Segal-Wilson theory. The flow is described by the Weyl functions of the
underlying Jacobi operators. This is a continuation of the previous work on
the KdV flow.

\end{abstract}

\section{Introduction}

The Toda\footnotetext[1]{Dept. of Math. Nanjing Univ. Nanjing China
shuozhang@smail.nju.edu.cn} lattice\footnotetext[2]{Dept. of Math. \ Osaka
Univ. \ Toyonaka Japan \ skotani@outlook.com} introduced\footnotetext[3]{Dept.
of Math. Nanjing Univ. Nanjing China 178881559@qq.com} by M. Toda in 1967 is a
simple model for a one-dimensional crystal in solid state physics, and it is
known to be one of the earliest examples of a non-linear completely integrable
system. Originally it was an infinite dimensional system of equations%
\[%
\begin{array}
[c]{l}%
\overset{\cdot}{p}_{n}(t)=e^{-\left(  q_{n}(t)-q_{n-1}(t)\right)
}-e^{-\left(  q_{n+1}(t)-q_{n}(t)\right)  }\\
\overset{\cdot}{q}_{n}(t)=p_{n}(t)
\end{array}
\]
with $n\in\mathbb{Z}$, and later it was rewritten in an equivalent form%
\begin{equation}%
\begin{array}
[c]{l}%
\overset{\cdot}{a}_{n}(t)=a_{n}(t)\left(  b_{n}(t)-b_{n-1}(t)\right) \\
\overset{\cdot}{b}_{n}(t)=2\left(  a_{n+1}(t)^{2}-a_{n}(t)^{2}\right)
\end{array}
\label{1}%
\end{equation}
by Flaschka variables%
\[
a_{n}(t)=\dfrac{1}{2}e^{-\left(  q_{n}(t)-q_{n-1}(t)\right)  /2}\text{,
\ \ }b_{n}(t)=-\dfrac{1}{2}p_{n}(t)\text{.}%
\]
This version is useful because it relates Toda lattice with a Jacobi operator%
\[
\left(  H_{q}u\right)  _{n}=a_{n+1}u_{n+1}+a_{n}u_{n-1}+b_{n}u_{n}\text{,}%
\]
where $a_{n}>0$, $b_{n}\in\mathbb{R}$ and $q\equiv\left\{  a_{n}%
,b_{n}\right\}  _{n\in\mathbb{Z}}$. The remarkable fact here is that if the
coefficients $q$ are replaced by a solution $q\left(  t\right)  \equiv\left\{
a_{n}\left(  t\right)  ,b_{n}\left(  t\right)  \right\}  _{n\in\mathbb{Z}}$ to
(\ref{1}), then the arising Jacobi operator $H_{q(t)}$ and the initial Jacobi
operator $H_{q(0)}$ have the same spectrum. Flaschka discovered in 1973 that
$H_{q(t)}$ satisfies the following operator equation%
\begin{equation}
\partial_{t}H_{q(t)}=\left[  P\left(  t\right)  ,H_{q(t)}\right]  \text{
\ }\left(  \equiv P\left(  t\right)  H_{q(t)}-H_{q(t)}P\left(  t\right)
\right)  \label{2}%
\end{equation}
with a skew symmetric operator $P(t)$%
\[
\left(  P\left(  t\right)  u\right)  _{n}=\dfrac{1}{2}\left(  a_{n}%
(t)u_{n+1}-a_{n-1}(t)u_{n-1}\right)  \text{,}%
\]
which establishes the unitary equivalence of $H_{q(t)}$ and $H_{q(0)}$. If
$P(t)$ is replaced by other suitable skew symmetric operators, we obtain
infinitely many solutions to a hierarchy of non-linear equations by (\ref{2}),
which is called Toda hierarchy. The pair $\left\{  H_{q(t)},P(t)\right\}  $ is
called a Lax pair and the operator $H_{q(t)}$ is the underlying operator for
this hierarchy of equations. There are many works treating Toda hierarchy and
main results obtained in the last century can be found in the book by G.
Teschl \cite{t}.

Recently in 2018 there appeared another point of view for the Toda hierarchy
by C. Remling \cite{r} and D. C. Ong-C. Remling \cite{or}. Remling considered
the Lax equation in a form
\begin{equation}
\partial_{t}H_{q(t)}=\left[  p\left(  H_{q(t)}\right)  _{a},H_{q(t)}\right]
\label{3}%
\end{equation}
with real polynomial $p$, where $X_{a}$ for a self-adjoint bounded operator
$X$ on $\ell^{2}\left(  \mathbb{Z}\right)  $ is defined by%
\begin{equation}
\left(  X_{a}\right)  _{jk}=\left\{
\begin{array}
[c]{cc}%
X_{jk} & \text{if }j<k\\
0 & \text{if }j=k\\
-X_{jk} & \text{if }j>k
\end{array}
\right.  \text{ \ with }X_{jk}=\left\langle \delta_{j},X\delta_{k}%
\right\rangle \text{.} \label{4}%
\end{equation}
He introduced a notion of cocycles or transfer matrices which map Weyl
functions for Jacobi operators to Weyl functions. This notion played a crucial
role in the study of spectral problems for ergodic Jacobi operators, so it is
expected that the cocycle property of the Toda flow is also significant to
investigate global behavior of solutions to the Toda lattice. Remling-Ong
expanded the Toda hierarchy from polynomials to entire functions preserving
the cocycle property.

The purpose of this article is to apply Sato's theory and provide another
approach to the study of the Toda hierarchy. In 1980 M. Sato \cite{sa}
obtained solutions to many integrable systems by constructing flows on an
infinite dimensional Grassmann manifold. Later in 1985 G. Segal-G. Wilson
\cite{sew} interpreted Sato's theory by Hardy space on the unit circle of the
complex plane $\mathbb{C}$. They considered Grassmann manifolds on
$L^{2}\left(  \left\vert z\right\vert =1\right)  $ consisting of closed
subspaces which are invariant under multiplication by $z^{n}$, and they showed
the KdV hierarchy is obtained when $n=2$. Segal-Wilson's approach was employed
by S. Kotani \cite{k} to construct general non-decaying solutions to the KdV
equation including many almost-periodic initial data. In the present article
we apply Segal-Wilson's method to the Toda hierarchy by following Kotani's work.

For the KdV case $z^{2}$ was employed as the spectral parameter for
Schr\"{o}dinger operators, which are the underlying operators for the KdV
hierarchy. In the Toda case we use $z+z^{-1}$ as its spectral parameter since
$\left\{  z^{n}\right\}  _{n\in\mathbb{Z}}$ forms a system of generalized
eigen-functions for the discrete Laplacian $u_{n+1}+u_{n-1}$. The bounded
domain $D_{+}$ in $\mathbb{C}$ which is fundamental in the following argument
is chosen so that it satisfies%
\begin{equation}%
\begin{array}
[c]{l}%
D_{+}\ni z\text{ \ }\longrightarrow\text{ \ }z^{-1}\text{, \ }\overline{z}\in
D_{+}\text{ (}\overline{z}\text{ denotes the complex conjugate of }z\text{)}\\
D_{+}\text{ contains the spectrum of Jacobi operators}%
\end{array}
\text{.}\label{5}%
\end{equation}
Throughout the paper we assume the spectrum \textrm{sp}$H_{q}$ of a Jacobi
operator $H_{q}$ is bounded, which is equivalent to the boundedness of the
coefficients $q=\left\{  a_{n},b_{n}\right\}  _{n\in\mathbb{Z}}$. Let%
\[
\phi\left(  z\right)  =z+z^{-1}\text{.}%
\]
If \textrm{sp}$H_{q}\subset\left[  -\lambda_{0},\lambda_{0}\right]  $ holds
for $\lambda_{0}>2$ ($\left[  -2,2\right]  $ is the spectrum for the discrete
Laplacian), then%
\[
\Sigma_{\lambda_{0}}\equiv\phi^{-1}\left(  \left[  -\lambda_{0},\lambda
_{0}\right]  \right)  =\left\{  \left\vert z\right\vert =1\right\}
\cup\left[  -\ell,-\ell^{-1}\right]  \cup\left[  \ell^{-1},\ell\right]
\]
with $\ell=\left(  \lambda_{0}+\sqrt{\lambda_{0}^{2}-4}\right)  /2$.
Therefore, $D_{+}$ should satisfy%
\[
\Sigma_{\lambda_{0}}\subset D_{+}\text{.}%
\]
Since $D_{+}$ is bounded, the origin $0$ is not an element of $D_{+}$,
otherwise the symmetry in (\ref{5}) implies $\infty=0^{-1}\in D_{+}$. We
denote%
\[%
\begin{array}
[c]{l}%
C=\partial D_{+}\text{ (the boundary of }D_{+}\text{), \ \ }D_{-}%
=\mathbb{C}\backslash\left(  D_{+}\cup C\right)  \\
C=C_{1}\cup C_{2}\text{ (}C_{1}\text{ is the outer curve and }C_{2}\text{ is
the inner curve of }C\text{)}%
\end{array}
\text{,}%
\]
and assume $C_{1}$, $C_{2}$ are closed smooth simple curves. Denote%
\[
D_{-}=D_{-}^{1}\cup D_{-}^{2}\text{ \ \ (2 disjoint domains),}%
\]
where $D_{-}^{1}$ is unbounded and $D_{-}^{2}$ containing $0$ is bounded.

The basic space is the Hilbert space $L^{2}\left(  C\right)  $, which is a
direct sum of 2 closed subspaces $H_{\pm}$:%
\[%
\begin{array}
[c]{l}%
H_{+}=L^{2}\text{-closure of }\left\{  \text{all rational functions with no
poles in }D_{+}\right\} \\
H_{-}=L^{2}\text{-closure of }\left\{
\begin{array}
[c]{l}%
\text{all functions }f\text{ on }D_{-}\text{ such that }\left.  f\right\vert
_{D_{-}^{j}}\text{ are}\\
\text{rational functions with no poles on }D_{-}^{j}\\
\text{for }j=1\text{, }2\text{ and }f(z)=o(1)\text{ as }z\rightarrow\infty
\end{array}
\right\}
\end{array}
\text{.}%
\]
The Hardy space $H_{+}$ is generated by $\left\{  z^{n}\right\}
_{n\in\mathbb{Z}}$, and%
\[
r\left(  z\right)  =\left\{
\begin{array}
[c]{cc}%
z^{m} & \text{for \ }z\in D_{-}^{1}\\
z^{n} & \text{for \ }z\in D_{-}^{2}%
\end{array}
\right.  \text{ with }m\leq-1\text{ and }n\geq0
\]
is an element of $H_{-}$. The projections $\mathfrak{p}_{\pm}$ from
$L^{2}\left(  C\right)  $ onto $H_{\pm}$ are obtained by%
\[%
\begin{array}
[c]{ll}%
\left(  \mathfrak{p}_{+}f\right)  \left(  z\right)  =\dfrac{1}{2\pi i}%
{\displaystyle\int_{C}}
\dfrac{f\left(  \lambda\right)  }{\lambda-z}d\lambda & \text{for }z\in D_{+}\\
\left(  \mathfrak{p}_{-}f\right)  \left(  z\right)  =\dfrac{1}{2\pi i}%
{\displaystyle\int_{C}}
\dfrac{f\left(  \lambda\right)  }{z-\lambda}d\lambda & \text{for }z\in D_{-}%
\end{array}
\text{,}%
\]
where integrals on $C$ are defined by%
\[%
{\displaystyle\int_{C}}
f\left(  \lambda\right)  d\lambda=%
{\displaystyle\int_{C_{1}}}
f\left(  \lambda\right)  d\lambda+%
{\displaystyle\int_{C_{2}}}
f\left(  \lambda\right)  d\lambda\text{.}%
\]
$C_{1}$ is oriented anti-clockwise and $C_{2}$ clockwise. It is known that
$\mathfrak{p}_{\pm}$ are bounded on $L^{2}\left(  C\right)  $ (see \cite{v})
and satisfy%
\[
\mathfrak{p}_{\pm}^{2}=\mathfrak{p}_{\pm}\text{, \ \ \ }\mathfrak{p}%
_{+}+\mathfrak{p}_{-}=I\text{ \ (the identity operator on }L^{2}\left(
C\right)  \text{).}%
\]
In our Toda case Sato-Segal-Wilson theory is performed on a Glassmann manifold
$Gr^{\left(  toda\right)  }$ consisting of closed subspaces $W$ of
$L^{2}\left(  C\right)  $ satisfying%
\begin{equation}
\phi\left(  z\right)  W\subset W\text{.} \label{6}%
\end{equation}
Additionally we assume also the comparability of $W$ with $H_{+}$, that is%
\begin{equation}
\mathfrak{p}_{+}:W\longrightarrow H_{+}\text{ \ \ is bijective.} \label{7}%
\end{equation}
Generally a bounded vector function%
\[
\boldsymbol{a}\left(  \lambda\right)  =\left(  a_{1}\left(  \lambda\right)
,a_{2}\left(  \lambda\right)  \right)  \text{ \ on }C
\]
gives such $W$ by%
\begin{equation}
W_{\boldsymbol{a}}\equiv\boldsymbol{a}H_{+}=\left\{  \boldsymbol{a}u\text{;
\ }u\in H_{+}\right\}  \text{,} \label{8}%
\end{equation}
where the product $\boldsymbol{a}u$ is defined by%
\begin{equation}
\left(  \boldsymbol{a}u\right)  \left(  \lambda\right)  =a_{1}\left(
\lambda\right)  u\left(  \lambda\right)  +a_{2}\left(  \lambda\right)
\lambda^{-1}u\left(  \lambda^{-1}\right)  \text{.} \label{9}%
\end{equation}
The invariance $\phi\left(  \lambda\right)  =\phi\left(  \lambda^{-1}\right)
$ implies%
\[
\phi\left(  \lambda\right)  \left(  \boldsymbol{a}u\right)  \left(
\lambda\right)  =a_{1}\left(  \lambda\right)  \left(  \phi u\right)  \left(
\lambda\right)  +a_{2}\left(  \lambda\right)  \lambda^{-1}\left(  \phi
u\right)  \left(  \lambda^{-1}\right)  =\left(  \boldsymbol{a}\left(  \phi
u\right)  \right)  \left(  \lambda\right)  \text{,}%
\]
hence $\phi W_{\boldsymbol{a}}\subset W_{\boldsymbol{a}}$ holds due to $\phi
H_{+}\subset H_{+}$, which shows (\ref{6}). Moreover, defining an operator on
$H_{+}$ by%
\[
T\left(  \boldsymbol{a}\right)  u=\mathfrak{p}_{+}\left(  \boldsymbol{a}%
u\right)  \text{ \ for }u\in H_{+}\text{,}%
\]
one sees that the property (\ref{7}) for $W_{\boldsymbol{a}}$ is equivalent to%
\[
T\left(  \boldsymbol{a}\right)  :H_{+}\rightarrow H_{+}\text{ \ is bijective.}%
\]
The operator $T\left(  \boldsymbol{a}\right)  $ is called Toeplitz operator
with symbol $\boldsymbol{a}$. Hereafter we consider $W_{\boldsymbol{a}}$
instead of a general $W$.

Sato introduced tau-function to represent his flows on a Grassmann manifold.
In our context it is defined as follows. Set%
\begin{equation}
\boldsymbol{A}^{inv}\left(  C\right)  =\left\{  \boldsymbol{a}=\left(
a_{1},a_{2}\right)  \text{; }\sup_{\lambda\in C}\left\Vert \boldsymbol{a}%
\left(  \lambda\right)  \right\Vert <\infty\text{ and }T\left(  \boldsymbol{a}%
\right)  \text{ is invertible on }H_{+}\right\}  \label{10}%
\end{equation}
with $\left\Vert \boldsymbol{a}\right\Vert =\sqrt{\left\vert a_{1}\right\vert
^{2}+\left\vert a_{2}\right\vert ^{2}}$ and a group $\Gamma$ acting on the
space of symbols%
\begin{equation}
\Gamma=\left\{
\begin{array}
[c]{c}%
g=re^{h}\text{; }r\text{ is a rational function with no poles nor zeros in
}\Sigma_{\lambda_{0}}\\
\text{and }h\text{ is analytic in a neighborhood of }\Sigma_{\lambda_{0}}%
\end{array}
\right\}  \text{.} \label{11}%
\end{equation}
The domain $D_{+}$ for each $g\in\Gamma$ is chosen so that $g$ has no poles
nor zeros in $\overline{D}_{+}$ (the closure). For $\boldsymbol{a}%
\in\boldsymbol{A}^{inv}\left(  C\right)  $, $g\in\Gamma$ define%
\begin{equation}
\tau_{\boldsymbol{a}}\left(  g\right)  =\det\left(  g^{-1}T\left(
g\boldsymbol{a}\right)  T\left(  \boldsymbol{a}\right)  ^{-1}\right)  \text{.}
\label{12}%
\end{equation}
Since $g^{-1}T\left(  g\boldsymbol{a}\right)  T\left(  \boldsymbol{a}\right)
^{-1}-I$ is a trace class operator on $H_{+}$, one can define $\tau
_{\boldsymbol{a}}\left(  g\right)  $. Since we are trying to construct the
Toda flow on a subclass of $\boldsymbol{A}^{inv}\left(  C\right)  $, the
property $g\boldsymbol{a}\in\boldsymbol{A}^{inv}\left(  C\right)  $ is crucial
and this is valid if and only if $\tau_{\boldsymbol{a}}\left(  g\right)
\neq0$. Therefore, it is one of the main issues in this paper to find a class
of symbols $\boldsymbol{a}$ satisfying $\tau_{\boldsymbol{a}}\left(  g\right)
\neq0$ for sufficiently enough $g\in\Gamma$. Since we are interested only in
real valued solutions, we assume $\overline{\boldsymbol{a}}=\boldsymbol{a}$,
where generally the conjugation $\overline{f}$ is defined by
\[
\overline{f}\left(  z\right)  =\overline{f\left(  \overline{z}\right)
}\text{.}%
\]
Then, it is easily seen that $\tau_{\boldsymbol{a}}\left(  g\right)
\in\mathbb{R}$ if $\overline{\boldsymbol{a}}=\boldsymbol{a}$ and $g\in
\Gamma_{\operatorname{real}}$ with%
\begin{equation}
\Gamma_{\operatorname{real}}=\left\{  g\in\Gamma\text{; \ }g=\overline
{g}\right\}  \text{.} \label{13}%
\end{equation}
If $\tau_{\boldsymbol{a}}\left(  z^{n}\right)  >0$ holds for any
$n\in\mathbb{Z}$, then the coefficients $\left\{  a_{n}\left(  \boldsymbol{a}%
\right)  ,b_{n}\left(  \boldsymbol{a}\right)  \right\}  _{n\in\mathbb{Z}}$ are
obtained by%
\begin{equation}
a_{n}\left(  \boldsymbol{a}\right)  =\sqrt{\dfrac{\tau_{\boldsymbol{a}}\left(
z^{n}\right)  \tau_{\boldsymbol{a}}\left(  z^{n-2}\right)  }{\tau
_{\boldsymbol{a}}\left(  z^{n-1}\right)  ^{2}}}\text{, \ \ \ }b_{n}\left(
\boldsymbol{a}\right)  =\left.  \partial_{\varepsilon}\log\dfrac
{\tau_{\boldsymbol{a}}\left(  z^{n}e^{\varepsilon z}\right)  }{\tau
_{\boldsymbol{a}}\left(  z^{n-1}e^{\varepsilon z}\right)  }\right\vert
_{\varepsilon=0}\text{.} \label{14}%
\end{equation}
This formula resembles with the conventional representation of $\left\{
a_{n},b_{n}\right\}  _{n\in\mathbb{Z}}$ by the moment problem, but the present
one is a representation on the whole $\mathbb{Z}$.

Set%
\[
Q_{\lambda_{0}}=\left\{  q=\left\{  a_{n},b_{n}\right\}  _{n\in\mathbb{Z}%
}\text{; \ }a_{n}>0\text{, }b_{n}\in\mathbb{R}\text{ and \textrm{sp}}%
H_{q}\subset\left[  -\lambda_{0},\lambda_{0}\right]  \right\}
\]
and let $m_{\pm}$ be the Weyl functions of $H_{q}$ (see Appendix). Define an
analytic function $m$ on $\mathbb{C}\backslash\Sigma_{\lambda_{0}}$ by%
\[
m\left(  z\right)  =\left\{
\begin{array}
[c]{ll}%
z+z^{-1}+a_{1}^{2}m_{+}\left(  z+z^{-1}\right)  & \text{if }z\in
\mathbb{C}\backslash\Sigma_{\lambda_{0}}\text{ and }\left\vert z\right\vert
>1\\
-a_{0}^{2}m_{-}\left(  z+z^{-1}\right)  +b_{0} & \text{if }z\in\mathbb{C}%
\backslash\Sigma_{\lambda_{0}}\text{ and }\left\vert z\right\vert <1
\end{array}
\right.  \text{.}%
\]
It is known that $q$ is completely recovered from $m$. If we define a symbol
$\boldsymbol{m}$%
\[
\boldsymbol{m}\left(  z\right)  =\left(  \dfrac{zm(z)-1}{z^{2}-1},z^{2}%
\dfrac{z-m\left(  z\right)  }{z^{2}-1}\right)  \text{,}%
\]
then it is proved that $\tau_{\boldsymbol{m}}\left(  g\right)  >0$ for any
$g\in\Gamma_{\operatorname{real}}\mathcal{\ }$and one can define $q\left(
g\boldsymbol{m}\right)  =\left\{  a_{n}\left(  g\boldsymbol{m}\right)
,b_{n}\left(  g\boldsymbol{m}\right)  \right\}  _{n\in\mathbb{Z}}$ by
(\ref{14}). $q\left(  g\boldsymbol{m}\right)  \in Q_{\lambda_{0}}$ is also
shown if $\boldsymbol{m}$ is generated by $\left\{  m_{\pm},a_{0},a_{1}%
,b_{0}\right\}  $ of $q\in Q_{\lambda_{0}}$, hence one can define%
\[
\mathrm{Toda}\left(  g\right)  q=q\left(  g\boldsymbol{m}\right)  \in
Q_{\lambda_{0}}\text{ \ for }q\in Q_{\lambda_{0}}\text{.}%
\]
Our main theorem is

\begin{theorem}
\label{t1}$\left\{  \mathrm{Toda}\left(  g\right)  \right\}  _{g\in
\Gamma_{\operatorname{real}}}$ define a flow on $Q_{\lambda_{0}}$ and yields
the Toda hierarchy by choosing $g_{t}=e^{tp}$ for any real polynomial $p$.
Especially $\mathrm{Toda}\left(  e^{tz}\right)  q$ provides a Toda lattice.
\end{theorem}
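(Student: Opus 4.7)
My plan is to establish the theorem in three stages: (i) show that $\mathrm{Toda}$ defines a $\Gamma_{\operatorname{real}}$-action on $Q_{\lambda_0}$ via a cocycle identity for the tau function; (ii) differentiate along the one-parameter subgroup $g_t = e^{tp}$ to extract the Toda hierarchy equations; (iii) specialize $p(z) = z$ to recover (\ref{1}). For (i), the central observation is the cocycle relation
\[
\tau_{\boldsymbol{a}}(g_1 g_2) = \tau_{\boldsymbol{a}}(g_1)\,\tau_{g_1\boldsymbol{a}}(g_2),
\]
which follows from (\ref{12}) and multiplicativity of $\det$ once one factors $(g_1g_2)^{-1}T(g_1g_2\boldsymbol{a})T(\boldsymbol{a})^{-1}$ through $T(g_1\boldsymbol{a})$. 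Applied with $g_1 = h$ and $g_2 = z^n$ or $z^n e^{\varepsilon z}$, the prefactor $\tau_{\boldsymbol{m}}(h)$ cancels in each ratio appearing in (\ref{14}), so $a_n(h\boldsymbol{m})$ and $b_n(h\boldsymbol{m})$ are computed directly from $\tau_{\boldsymbol{m}}(hz^n)$ and $\tau_{\boldsymbol{m}}(hz^n e^{\varepsilon z})$. The group law $\mathrm{Toda}(g_1g_2) = \mathrm{Toda}(g_1)\mathrm{Toda}(g_2)$ then reduces to the Grassmannian identification $W_{g_2\boldsymbol{m}_q} = W_{\boldsymbol{m}_{\mathrm{Toda}(g_2)q}}$: once this is in hand, iterating the formula gives $\mathrm{Toda}(g_1)\mathrm{Toda}(g_2)q = q((g_1g_2)\boldsymbol{m}_q) = \mathrm{Toda}(g_1g_2)q$. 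The identification itself follows from uniqueness of the Weyl representative of a Grassmann element, combined with the fact that both $g_2\boldsymbol{m}_q$ and $\boldsymbol{m}_{\mathrm{Toda}(g_2)q}$ yield the same Jacobi coefficients via (\ref{14}) and hence the same Weyl functions on $\mathbb{C}\setminus\Sigma_{\lambda_0}$.

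For (ii), fix $q \in Q_{\lambda_0}$, set $q(t) = \mathrm{Toda}(g_t)q$ with $g_t = e^{tp}$, and differentiate $\log a_n(t)$ and $b_n(t)$ in $t$ using (\ref{14}) and the cocycle identity. The derivative $\partial_t \log(\tau_{\boldsymbol{m}}(g_t z^n)/\tau_{\boldsymbol{m}}(g_t z^{n-1}))$ reduces to evaluating the variational derivative of $\log\tau$ against $p$, and the Sato--Segal--Wilson bilinear (Hirota) identities, valid in the present $L^2(C)$ framework, express these time derivatives as polynomial functionals of the neighbouring $\{a_m(t),b_m(t)\}$. To recognise these polynomials as those appearing on the right of (\ref{3}), I would compare the induced infinitesimal change on the Weyl function $m$: the cocycle formalism yields a rational expression in $p$ and $m$ matching the spectral-side computation that characterises the Remling--Ong flow (\ref{3}), and because $q$ is recovered uniquely from $m$, equality of the $m$-derivatives forces equality of the $q$-derivatives.

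Finally, for (iii), taking $p(z) = z$ the Hirota identity collapses to a three-term recursion that, together with (\ref{14}), yields
\[
\partial_t a_n = a_n(b_n-b_{n-1}), \qquad \partial_t b_n = 2(a_{n+1}^2-a_n^2),
\]
i.e.\ (\ref{1}). The main obstacle I anticipate is stage (ii): proving the requisite Hirota-type identities in the two-curve Grassmannian on $L^2(C)$ with the twisted product (\ref{9}), and then matching the resulting polynomials with the commutator $[p(H_q)_a, H_q]$. Both pieces should follow the Kotani KdV blueprint \cite{k}, but the replacement $z^2 \mapsto z+z^{-1}$ and the two-component structure of $\boldsymbol{a}$ introduce nontrivial bookkeeping that must be handled carefully.
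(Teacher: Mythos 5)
Your outline captures the formal algebra of the construction (the cocycle identity for $\tau$, differentiation along $g_t=e^{tp}$, specialization to $p(z)=z$), but it omits the issue that constitutes most of the paper's actual work: showing that the flow is \emph{defined} at all, i.e.\ that $\tau_{\boldsymbol{m}}\left(g\right)\neq 0$ --- in fact $>0$ --- for every $g\in\Gamma_{\operatorname{real}}$ and every symbol $\boldsymbol{m}$ built from $q\in Q_{\lambda_{0}}$, and that the resulting $q\left(g\boldsymbol{m}\right)$ lies back in $Q_{\lambda_{0}}$. Without $\tau_{\boldsymbol{a}}\left(g\right)\neq0$ the Toeplitz operator $T\left(g\boldsymbol{a}\right)$ is not invertible (Lemma \ref{l8}(i)), so the coefficients (\ref{14}) are undefined and the flow hits a singularity; without $\tau_{\boldsymbol{a}}\left(z^{n}\right)>0$ the square root in (\ref{14}) does not produce real $a_{n}>0$; and without a spectral bound one cannot return to $Q_{\lambda_{0}}$ and iterate, so the ``flow on $Q_{\lambda_{0}}$'' part of the statement is not established. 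The paper devotes Sections 6--8 to exactly this: the classes $\boldsymbol{A}_{+}^{inv}$ and $\boldsymbol{A}_{++}^{inv}$, the Herglotz-type analysis of $m_{\boldsymbol{a}}$ (Lemmas \ref{l14}, \ref{l15}, \ref{l18} and Lemma \ref{la2}), the positive-solution argument bounding $\mathrm{sp}H_{q}$ (Lemma \ref{l16}, Proposition \ref{p3}), and only then the identification with $Q_{\lambda_{0}}$. None of this follows from the cocycle identity alone, and your plan does not supply a substitute.

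Two further points. First, your stage (ii) rests on Sato--Segal--Wilson bilinear (Hirota) identities which you yourself flag as unproven in this two-curve, two-component setting; the paper avoids them entirely and derives the hierarchy by the more elementary dressing argument of Lemma \ref{l12}: one shows $\partial_{t}f_{n}=-p\left(L\right)_{-}f_{n}$ directly from $\mathfrak{p}_{+}z^{n}e^{t\widehat{p}}f_{n}=1$ together with the injectivity of $\mathfrak{p}_{+}$ on $W$, reads off $\partial_{t}L=\left[L,p\left(L\right)_{-}\right]$ using the linear independence of the $f_{k}$ (Lemma \ref{l13}), and converts to $\left[H,p\left(H\right)_{a}\right]$ by the diagonal conjugation $\Lambda_{\boldsymbol{a}}$. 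Second, your group-law argument via the identification $W_{g_{2}\boldsymbol{m}_{q}}=W_{\boldsymbol{m}_{\mathrm{Toda}\left(g_{2}\right)q}}$ is not correct as stated: the paper explicitly notes that the symbol-to-$q$ correspondence is many-to-one, and these two subspaces need not coincide. What is true, and what the paper proves (Lemmas \ref{l13-1} and \ref{l19}), is that two symbols with the same $m$-function again have the same $m$-function after multiplication by any $g\in\Gamma_{\operatorname{real}}$, hence yield the same $q$; this requires the explicit transformation formula (\ref{28-4}) for $m_{q_{\zeta}\boldsymbol{a}}$, not a uniqueness statement about Grassmann representatives.
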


This theorem implies that one can construct the Toda flow on the space of
bounded initial data $\left\{  a_{n},b_{n}\right\}  _{n\in\mathbb{Z}}$, and
the group $\Gamma_{\operatorname{real}}$ is more general than entire functions
treated by Ong-Remling. One can define the cocycles for the present flow.

One of the advantages for this construction is the possibility to extend the
flow to a flow on an unbounded initial data $\left\{  a_{n},b_{n}\right\}
_{n\in\mathbb{Z}}$. For this purpose one has to replace the domain $D_{+}$
with a domain containing $\left(  -\infty,0\right)  \cup\left(  0,\infty
\right)  $ and trace the whole argument below. This will be a future work.

\section{Basic properties of $W_{\boldsymbol{a}}$}

For a bounded vector symbol $\boldsymbol{a}$ on $C$ the space
$W_{\boldsymbol{a}}$ is defined in (\ref{8}). In this section we show basic
properties of $W_{\boldsymbol{a}}$ for $\boldsymbol{a}\in\boldsymbol{A}%
^{inv}\left(  C\right)  $.

Define%
\[
Rf(z)=z^{-1}f\left(  z^{-1}\right)  \text{.}%
\]
Since $C$ is invariant under the transformation $z\rightarrow z^{-1}$, $R$
acts on functions on $C$. We have

\begin{lemma}
\label{l1}Identities%
\[
\mathfrak{p}_{\pm}R=R\mathfrak{p}_{\pm}\text{ on }L^{2}\left(  C\right)
\text{, \ \ }T\left(  \boldsymbol{a}\right)  R=RT\left(  \widetilde
{\boldsymbol{a}}\right)  \text{ on }H_{+}%
\]
are valid, where $\widetilde{\boldsymbol{a}}\left(  \lambda\right)  =\left(
a_{1}\left(  \lambda^{-1}\right)  ,a_{2}\left(  \lambda^{-1}\right)  \right)
$. Especially this implies $\widetilde{\boldsymbol{a}}\in\boldsymbol{A}%
^{inv}\left(  C\right)  $ if $\boldsymbol{a}\in\boldsymbol{A}^{inv}\left(
C\right)  $.
\end{lemma}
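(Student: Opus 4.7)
The plan is to prove the two identities in turn, with the second following quickly from the first once a pointwise algebraic identity for the twisted product is checked.

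For the intertwining $\mathfrak{p}_{\pm}R=R\mathfrak{p}_{\pm}$, I would work directly from the Cauchy integral representation. For $z\in D_{+}$,
\[
(\mathfrak{p}_{+}Rf)(z)=\dfrac{1}{2\pi i}\int_{C}\dfrac{\lambda^{-1}f(\lambda^{-1})}{\lambda-z}\,d\lambda ,
\]
and I would perform the substitution $\mu=\lambda^{-1}$, so that $d\lambda=-\mu^{-2}d\mu$ and $\lambda-z=(1-\mu z)/\mu$. After simplification the integrand becomes $-f(\mu)/(1-\mu z)\,d\mu = f(\mu)/(\mu z-1)\,d\mu=z^{-1}f(\mu)/(\mu-z^{-1})\,d\mu$, so
\[
(\mathfrak{p}_{+}Rf)(z)=z^{-1}\,(\mathfrak{p}_{+}f)(z^{-1})=(R\mathfrak{p}_{+}f)(z),
\]
provided one knows that (i) $z^{-1}\in D_{+}$, which holds because of the symmetry $(\ref{5})$, and (ii) the contour $C$, endowed with its stated orientation ($C_{1}$ anti-clockwise, $C_{2}$ clockwise), is mapped to itself by $\lambda\mapsto\lambda^{-1}$. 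The hard part of this step is verifying (ii): one has to check that the inversion swaps $C_{1}$ and $C_{2}$ and simultaneously reverses anti-clockwise to clockwise, so that the overall signed contour is invariant and no sign correction appears. This is really the only subtle point in the lemma. Once $\mathfrak{p}_{+}R=R\mathfrak{p}_{+}$ is in hand, $\mathfrak{p}_{-}R=R\mathfrak{p}_{-}$ is free from $\mathfrak{p}_{-}=I-\mathfrak{p}_{+}$.

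For the second identity, I would first record that $R$ preserves $H_{+}$ (since $Rz^{n}=z^{-n-1}$) and that $R^{2}=I$. Then, starting from the definition in $(\ref{9})$ and using $(Ru)(\lambda)=\lambda^{-1}u(\lambda^{-1})$, a direct pointwise computation gives
\[
\bigl(\boldsymbol{a}(Ru)\bigr)(\lambda)=a_{1}(\lambda)\lambda^{-1}u(\lambda^{-1})+a_{2}(\lambda)u(\lambda),
\]
while, using $\widetilde{\boldsymbol{a}}(\lambda)=(a_{1}(\lambda^{-1}),a_{2}(\lambda^{-1}))$,
\[
\bigl(R(\widetilde{\boldsymbol{a}}u)\bigr)(\lambda)=\lambda^{-1}\bigl(\widetilde{\boldsymbol{a}}u\bigr)(\lambda^{-1})=\lambda^{-1}a_{1}(\lambda)u(\lambda^{-1})+a_{2}(\lambda)u(\lambda).
\]
These agree, so $\boldsymbol{a}(Ru)=R(\widetilde{\boldsymbol{a}}u)$ as functions on $C$. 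Applying $\mathfrak{p}_{+}$ and using the first identity yields $T(\boldsymbol{a})Ru=\mathfrak{p}_{+}R(\widetilde{\boldsymbol{a}}u)=R\mathfrak{p}_{+}(\widetilde{\boldsymbol{a}}u)=RT(\widetilde{\boldsymbol{a}})u$.

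Finally, for the ``especially'' assertion, since $R^{2}=I$ on $H_{+}$, the identity just proved rearranges to $T(\widetilde{\boldsymbol{a}})=RT(\boldsymbol{a})R$, so invertibility of $T(\boldsymbol{a})$ on $H_{+}$ transfers to $T(\widetilde{\boldsymbol{a}})$, with inverse $RT(\boldsymbol{a})^{-1}R$. The symbol bound $\sup_{\lambda\in C}\|\widetilde{\boldsymbol{a}}(\lambda)\|<\infty$ is immediate from the invariance of $C$ under $\lambda\mapsto\lambda^{-1}$ and the corresponding bound for $\boldsymbol{a}$. Hence $\widetilde{\boldsymbol{a}}\in\boldsymbol{A}^{inv}(C)$, completing the lemma.
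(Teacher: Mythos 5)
Your proposal is correct and follows essentially the same route as the paper: both arguments reduce to the invariance of the signed contour $C=C_{1}^{\text{ccw}}\cup C_{2}^{\text{cw}}$ under $\lambda\mapsto\lambda^{-1}$ (the paper packages this as the identity $\int_{C}f(\lambda^{-1})\,d\lambda=-\int_{C}f(\lambda)\lambda^{-2}\,d\lambda$, you perform the substitution directly in the Cauchy integral), and both derive the second identity from the pointwise relation $\boldsymbol{a}R=R\widetilde{\boldsymbol{a}}$, which you verify explicitly where the paper only asserts it. The orientation check you flag as the subtle point is exactly the content of the paper's preliminary computation, and your treatment of the ``especially'' clause via $T(\widetilde{\boldsymbol{a}})=RT(\boldsymbol{a})R$ is the intended one.
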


\begin{proof}
For $f\in L^{2}\left(  C\right)  $ note%
\begin{align*}
\int_{C}f\left(  \lambda^{-1}\right)  d\lambda &  =\int_{C_{1}}f\left(
\lambda^{-1}\right)  d\lambda+\int_{C_{2}}f\left(  \lambda^{-1}\right)
d\lambda\\
&  =-\int_{C_{1}}f\left(  \lambda\right)  \lambda^{-2}d\lambda-\int_{C_{2}%
}f\left(  \lambda\right)  \lambda^{-2}d\lambda=-\int_{C}f\left(
\lambda\right)  \lambda^{-2}d\lambda\text{.}%
\end{align*}
Then one has%
\[
\mathfrak{p}_{+}\left(  Rf\right)  \left(  z\right)  =\dfrac{1}{2\pi i}%
\int_{C}\dfrac{\lambda^{-1}f\left(  \lambda^{-1}\right)  }{\lambda-z}%
d\lambda=z^{-1}\dfrac{1}{2\pi i}\int_{C}\dfrac{f\left(  \lambda\right)
}{\lambda-z^{-1}}d\lambda=R\left(  \mathfrak{p}_{+}f\right)  \left(  z\right)
\text{.}%
\]
The second identity follows from $\boldsymbol{a}R=R\widetilde{\boldsymbol{a}}$
and the first identity.\medskip
\end{proof}

For $\boldsymbol{a}\in\boldsymbol{A}^{inv}\left(  C\right)  $ and
$n\in\mathbb{Z}$ define a sequence of functions of $H_{-}$ by%
\[
\varphi_{\boldsymbol{a}}^{\left(  n\right)  }(z)=\mathfrak{p}_{-}\left(
\boldsymbol{a}T\left(  \boldsymbol{a}\right)  ^{-1}z^{n}\right)  \in
H_{-}\text{.}%
\]

\begin{lemma}
\label{l2}$\varphi_{\boldsymbol{a}}^{\left(  n\right)  }$ for $\boldsymbol{a}%
\in\boldsymbol{A}^{inv}\left(  C\right)  $ satisfies the following
identities:\newline%
\[%
\begin{array}
[c]{l}%
\text{(i) \ }\varphi_{\widetilde{\boldsymbol{a}}}^{\left(  n\right)
}(z)=z^{-1}\varphi_{\boldsymbol{a}}^{\left(  -n-1\right)  }(z^{-1})\\
\text{(ii)}%
\begin{array}
[c]{l}%
\left(  z+z^{-1}\right)  \left(  z^{n}+\varphi_{\boldsymbol{a}}^{\left(
n\right)  }\right)  =z^{n+1}+\varphi_{\boldsymbol{a}}^{\left(  n+1\right)
}+z^{n-1}+\varphi_{\boldsymbol{a}}^{\left(  n-1\right)  }\\
\text{ \ \ \ \ \ \ \ \ \ \ \ \ \ \ \ \ \ \ \ }+\varphi_{\widetilde
{\boldsymbol{a}}}^{\left(  -n-1\right)  }(0)\left(  1+\varphi_{\boldsymbol{a}%
}^{\left(  0\right)  }\right)  +\varphi_{\boldsymbol{a}}^{\left(  n\right)
}(0)\left(  z^{-1}+\varphi_{\boldsymbol{a}}^{\left(  -1\right)  }\right)
\end{array}
\end{array}
\text{.}%
\]
(ii) shows that linear combinations of $\left\{  1+\varphi_{\boldsymbol{a}%
}^{\left(  0\right)  }\text{, }z^{-1}+\varphi_{\boldsymbol{a}}^{\left(
-1\right)  }\right\}  $ generate $W_{\boldsymbol{a}}$.
\end{lemma}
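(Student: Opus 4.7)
The plan is to set $\psi_n\equiv z^n+\varphi_{\boldsymbol{a}}^{(n)}=\boldsymbol{a}\,T(\boldsymbol{a})^{-1}z^n$, which by construction lies in $W_{\boldsymbol{a}}$ and satisfies $\mathfrak{p}_+\psi_n=z^n$. Because $\mathfrak{p}_+:W_{\boldsymbol{a}}\to H_+$ is a bijection by (\ref{7}), any two elements of $W_{\boldsymbol{a}}$ that agree under $\mathfrak{p}_+$ must coincide; both identities will be deduced from this injectivity principle. For (i), inverting $T(\boldsymbol{a})R=RT(\widetilde{\boldsymbol{a}})$ from Lemma \ref{l1} gives $RT(\boldsymbol{a})^{-1}=T(\widetilde{\boldsymbol{a}})^{-1}R$, and combined with $Rz^{-n-1}=z^n$ and $R\boldsymbol{a}=\widetilde{\boldsymbol{a}}R$ (which is equivalent to the identity $\boldsymbol{a}R=R\widetilde{\boldsymbol{a}}$ extracted in the proof of Lemma \ref{l1}), one obtains $R\bigl(\boldsymbol{a}\,T(\boldsymbol{a})^{-1}z^{-n-1}\bigr)=\widetilde{\boldsymbol{a}}\,T(\widetilde{\boldsymbol{a}})^{-1}z^n$. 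Applying $\mathfrak{p}_-$ and using $\mathfrak{p}_-R=R\mathfrak{p}_-$ then yields $\varphi_{\widetilde{\boldsymbol{a}}}^{(n)}=R\varphi_{\boldsymbol{a}}^{(-n-1)}$, which is exactly (i).

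For (ii), both sides visibly lie in $W_{\boldsymbol{a}}$---the left-hand side by the invariance (\ref{6}), the right-hand side as a linear combination of $\psi_{n+1},\psi_{n-1},\psi_0,\psi_{-1}$---so by the injectivity principle it suffices to match their $\mathfrak{p}_+$-images. The right-hand side projects to $z^{n+1}+z^{n-1}+\varphi_{\widetilde{\boldsymbol{a}}}^{(-n-1)}(0)+\varphi_{\boldsymbol{a}}^{(n)}(0)\,z^{-1}$, while the left-hand side projects to $z^{n+1}+z^{n-1}+\mathfrak{p}_+\!\bigl((z+z^{-1})\varphi_{\boldsymbol{a}}^{(n)}\bigr)$. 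The crux is therefore the general formula
\[
\mathfrak{p}_+\!\bigl((z+z^{-1})v\bigr)(w)=(Rv)(0)+v(0)\,w^{-1}\qquad\text{for every } v\in H_-,
\]
which I would prove by two Cauchy-integral computations. Writing $\lambda/(\lambda-w)=1+w/(\lambda-w)$ and $1/\bigl(\lambda(\lambda-w)\bigr)=w^{-1}\bigl(1/(\lambda-w)-1/\lambda\bigr)$ and using $\mathfrak{p}_+v=0$ reduces everything to evaluating $\tfrac{1}{2\pi i}\!\int_C v\,d\lambda$ and $\tfrac{1}{2\pi i}\!\int_C v(\lambda)\lambda^{-1}d\lambda$. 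Because $v|_{D_-^2}$ is regular at $0$, the $C_2$-contributions are $0$ and $-v(0)$ respectively; deforming $C_1$ outward to a circle of large radius extracts the Laurent coefficient of $\lambda^{-1}$ of $v|_{D_-^1}$ at $\infty$, yielding $\lim_{\lambda\to\infty}\lambda v(\lambda)=(Rv)(0)$ and $0$. Substituting $v=\varphi_{\boldsymbol{a}}^{(n)}$ and using (i) to rewrite $(R\varphi_{\boldsymbol{a}}^{(n)})(0)=\varphi_{\widetilde{\boldsymbol{a}}}^{(-n-1)}(0)$ produces the desired match.

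The main obstacle is purely bookkeeping in the residue step: the opposite orientations of $C_1$ and $C_2$, the vanishing of $v$ at $\infty$ on $D_-^1$, and the regularity of $v$ at $0$ on $D_-^2$ must all be tracked carefully, but no deep analytic input beyond Cauchy's theorem is needed. Finally, the generation statement follows from reading (ii) as a three-term recurrence expressing $\psi_{n+1}$ in terms of $\phi\psi_n$, $\psi_{n-1}$, $\psi_0$ and $\psi_{-1}$: iterating it places every $\psi_n$ inside the $\mathbb{C}[\phi]$-module generated by $\psi_0$ and $\psi_{-1}$. Since $\boldsymbol{a}\,T(\boldsymbol{a})^{-1}$ is a bijection $H_+\to W_{\boldsymbol{a}}$ carrying the generating family $\{z^n\}_{n\in\mathbb{Z}}$ of $H_+$ onto $\{\psi_n\}_{n\in\mathbb{Z}}$, the latter generates $W_{\boldsymbol{a}}$, which is the asserted claim.
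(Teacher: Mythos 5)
Your proof is correct and follows essentially the same route as the paper: part (i) via the intertwining relations $\mathfrak{p}_{\pm}R=R\mathfrak{p}_{\pm}$, $T\left(  \boldsymbol{a}\right)  R=RT\left(  \widetilde{\boldsymbol{a}}\right)  $ of Lemma \ref{l1}, and part (ii) by identifying the $H_{+}$-component of $\left(  z+z^{-1}\right)  \left(  z^{n}+\varphi_{\boldsymbol{a}}^{\left(  n\right)  }\right)  $ as $z^{n+1}+z^{n-1}+\varphi_{\widetilde{\boldsymbol{a}}}^{\left(  -n-1\right)  }(0)+\varphi_{\boldsymbol{a}}^{\left(  n\right)  }(0)z^{-1}$ and then invoking the bijectivity of $\mathfrak{p}_{+}:W_{\boldsymbol{a}}\rightarrow H_{+}$. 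The only difference is cosmetic: you extract that component by explicit Cauchy-integral and residue computations, while the paper simply exhibits the $H_{+}\oplus H_{-}$ decomposition and reads off the same two coefficients.
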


\begin{proof}
Lemma \ref{l1} implies%
\begin{align*}
\varphi_{\widetilde{\boldsymbol{a}}}^{\left(  n\right)  }  &  =\mathfrak{p}%
_{-}\left(  \widetilde{\boldsymbol{a}}T\left(  \widetilde{\boldsymbol{a}%
}\right)  ^{-1}z^{n}\right)  =\mathfrak{p}_{-}\left(  \widetilde
{\boldsymbol{a}}RT\left(  \boldsymbol{a}\right)  ^{-1}R^{-1}z^{n}\right)
=\mathfrak{p}_{-}\left(  R\boldsymbol{a}T\left(  \boldsymbol{a}\right)
^{-1}z^{-n-1}\right) \\
&  =R\mathfrak{p}_{-}\left(  \boldsymbol{a}T\left(  \boldsymbol{a}\right)
^{-1}z^{-n-1}\right)  =R\varphi_{\boldsymbol{a}}^{\left(  -n-1\right)
}\text{,}%
\end{align*}
which is (i).

To show (ii) first note $z^{n}+\varphi_{\boldsymbol{a}}^{\left(  n\right)
}\in W_{\boldsymbol{a}}$, hence $\left(  z+z^{-1}\right)  \left(
z^{n}+\varphi_{\boldsymbol{a}}^{\left(  n\right)  }\right)  \in
W_{\boldsymbol{a}}$ is valid. Now decompose this quantity into elements of
$H_{\pm}$, namely%
\[
\left(  z+z^{-1}\right)  \left(  z^{n}+\varphi_{\boldsymbol{a}}^{\left(
n\right)  }\right)  =u+v
\]
with%
\[
\left\{
\begin{array}
[c]{l}%
u=z^{n+1}+z^{n-1}+\lim_{z\rightarrow\infty}z\varphi_{\boldsymbol{a}}^{\left(
n\right)  }(z)+\varphi_{\boldsymbol{a}}^{\left(  n\right)  }(0)z^{-1}\in
H_{+}\\
v=z\varphi_{\boldsymbol{a}}^{\left(  n\right)  }(z)-\lim_{z\rightarrow\infty
}z\varphi_{\boldsymbol{a}}^{\left(  n\right)  }(z)+z^{-1}\left(
\varphi_{\boldsymbol{a}}^{\left(  n\right)  }(z)-\varphi_{\boldsymbol{a}%
}^{\left(  n\right)  }(0)\right)  \in H_{-}%
\end{array}
\text{.}\right.
\]
Here note due to (i)%
\[
\lim_{z\rightarrow\infty}z\varphi_{\boldsymbol{a}}^{\left(  n\right)
}(z)=\varphi_{\widetilde{\boldsymbol{a}}}^{\left(  -n-1\right)  }(0)\text{.}%
\]
The above $u\in H_{+}$ has the origin in $W_{\boldsymbol{a}}$ of%
\[
z^{n+1}+\varphi_{\boldsymbol{a}}^{\left(  n+1\right)  }+z^{n-1}+\varphi
_{\boldsymbol{a}}^{\left(  n-1\right)  }+\varphi_{\widetilde{\boldsymbol{a}}%
}^{\left(  -n-1\right)  }(0)\left(  1+\varphi_{\boldsymbol{a}}^{\left(
0\right)  }\right)  +\varphi_{\boldsymbol{a}}^{\left(  n\right)  }(0)\left(
z^{-1}+\varphi_{\boldsymbol{a}}^{\left(  -1\right)  }\right)  \text{,}%
\]
hence the bijectivity of $\mathfrak{p}_{+}:W_{\boldsymbol{a}}\rightarrow
H_{+}$ implies the identity (ii).\bigskip
\end{proof}

Set%
\begin{equation}%
\begin{array}
[c]{l}%
\Delta_{\boldsymbol{a}}\left(  \zeta\right) \\
=\dfrac{\left(  1+\varphi_{\boldsymbol{a}}^{\left(  0\right)  }\left(
\zeta\right)  \right)  \left(  \zeta+\varphi_{\boldsymbol{a}}^{\left(
-1\right)  }\left(  \zeta^{-1}\right)  \right)  -\left(  1+\varphi
_{\boldsymbol{a}}^{\left(  0\right)  }\left(  \zeta^{-1}\right)  \right)
\left(  \zeta^{-1}+\varphi_{\boldsymbol{a}}^{\left(  -1\right)  }\left(
\zeta\right)  \right)  }{\zeta-\zeta^{-1}}%
\end{array}
\text{.} \label{15}%
\end{equation}
Later we need non-vanishing of $\Delta_{\boldsymbol{a}}\left(  \zeta\right)  $.

\begin{lemma}
\label{l3}$\Delta_{\boldsymbol{a}}\left(  \zeta\right)  \neq0$ holds for
$\boldsymbol{a}\in\boldsymbol{A}^{inv}\left(  C\right)  $ and $\zeta\in D_{-}$.
\end{lemma}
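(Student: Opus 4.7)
The plan is to suppose $\Delta_{\boldsymbol{a}}(\zeta_{0})=0$ at some $\zeta_{0}\in D_{-}$ and derive a contradiction with the bijectivity $\mathfrak{p}_{+}:W_{\boldsymbol{a}}\to H_{+}$ of (\ref{7}). Set $u=1+\varphi_{\boldsymbol{a}}^{(0)}$ and $v=z^{-1}+\varphi_{\boldsymbol{a}}^{(-1)}$, which lie in $W_{\boldsymbol{a}}$ and generate it by Lemma~\ref{l2}(ii). Since $\varphi_{\boldsymbol{a}}^{(0)},\varphi_{\boldsymbol{a}}^{(-1)}\in H_{-}$ extend analytically to $D_{-}$, the values $u(\zeta^{\pm1}),v(\zeta^{\pm1})$ are meaningful for $\zeta\in D_{-}$, and the numerator in (\ref{15}) is precisely the Wronskian-type determinant $u(\zeta)v(\zeta^{-1})-u(\zeta^{-1})v(\zeta)$. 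Its vanishing at $\zeta_{0}$ produces a nonzero pair $(\alpha,\beta)\in\mathbb{C}^{2}$ with
\[
w:=\alpha u+\beta v\in W_{\boldsymbol{a}},\qquad w(\zeta_{0})=w(\zeta_{0}^{-1})=0.
\]

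Next I would verify that $\tilde{w}(z):=w(z)/(\phi(z)-\phi(\zeta_{0}))$ again lies in $W_{\boldsymbol{a}}$. Writing $w=\boldsymbol{a}h$ with $h\in H_{+}$, we note that
\[
\frac{1}{\phi(z)-\phi(\zeta_{0})}=\frac{z}{(z-\zeta_{0})(z-\zeta_{0}^{-1})}
\]
is a rational function with no poles in $D_{+}$ (since $\zeta_{0},\zeta_{0}^{-1}\in D_{-}$), hence belongs to $H_{+}$, and multiplication by it preserves $H_{+}$; thus $h/(\phi-\phi(\zeta_{0}))\in H_{+}$ and $\tilde{w}=\boldsymbol{a}\bigl(h/(\phi-\phi(\zeta_{0}))\bigr)\in\boldsymbol{a}H_{+}=W_{\boldsymbol{a}}$. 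Equivalently, $1/(\phi-\phi(\zeta_{0}))$ is invariant under $z\mapsto z^{-1}$, so it commutes with the $\boldsymbol{a}$-action of (\ref{9}).

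The crucial step is to show $\mathfrak{p}_{+}\tilde{w}=0$. Split $w=\mathfrak{p}_{+}w+\mathfrak{p}_{-}w$ with $\mathfrak{p}_{+}w=\alpha+\beta z^{-1}$ and $\mathfrak{p}_{-}w=\alpha\varphi_{\boldsymbol{a}}^{(0)}+\beta\varphi_{\boldsymbol{a}}^{(-1)}\in H_{-}$. The piece $(\mathfrak{p}_{+}w)/(\phi-\phi(\zeta_{0}))=(\alpha z+\beta)/((z-\zeta_{0})(z-\zeta_{0}^{-1}))$ is a proper rational function with only simple poles at $\zeta_{0},\zeta_{0}^{-1}$, so it equals the sum of its principal parts, whose residues are $(\mathfrak{p}_{+}w)(\zeta_{0}^{\pm1})/\phi'(\zeta_{0}^{\pm1})$. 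On the other hand $(\mathfrak{p}_{-}w)/(\phi-\phi(\zeta_{0}))$ is meromorphic on $D_{-}$ with simple poles only at $\zeta_{0},\zeta_{0}^{-1}$ (the numerator is analytic there), and its $\mathfrak{p}_{+}$-image consists precisely of the corresponding principal parts with residues $(\mathfrak{p}_{-}w)(\zeta_{0}^{\pm1})/\phi'(\zeta_{0}^{\pm1})$. The hypotheses $w(\zeta_{0}^{\pm1})=0$ translate into $(\mathfrak{p}_{-}w)(\zeta_{0}^{\pm1})=-(\mathfrak{p}_{+}w)(\zeta_{0}^{\pm1})$, so the two families of principal parts cancel term by term and $\mathfrak{p}_{+}\tilde{w}=0$. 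Injectivity of $\mathfrak{p}_{+}|_{W_{\boldsymbol{a}}}$ then forces $\tilde{w}=0$, whence $w=0$ and $(\alpha,\beta)=(0,0)$, the desired contradiction. The main technical obstacle is the residue cancellation just outlined; the degenerate case $\zeta_{0}=0\in D_{-}^{2}$ (where $\zeta_{0}^{-1}=\infty$ and $v$ has a pole) is handled either by a parallel calculation in a chart at infinity or by continuity from the generic case, since a direct computation gives $\Delta_{\boldsymbol{a}}(0)=1$.
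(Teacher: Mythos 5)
Your proof is correct, and while it rests on the same two pillars as the paper's --- multiplication by the $z\mapsto z^{-1}$-invariant rational function with poles at $\zeta_{0},\zeta_{0}^{-1}$ (your $1/(\phi(z)-\phi(\zeta_{0}))$ equals $-\zeta_{0}^{-1}$ times the paper's $r(z)=(\zeta_{0}-z)^{-1}(\zeta_{0}-z^{-1})^{-1}$) and the injectivity of $T(\boldsymbol{a})$, equivalently of $\mathfrak{p}_{+}|_{W_{\boldsymbol{a}}}$ --- the route is organized differently. The paper computes $T(\boldsymbol{a})(ru_{0})$ and $T(\boldsymbol{a})(ru_{-1})$ explicitly as rational functions of $z$, eliminates to exhibit a combination mapped to $\Delta_{\boldsymbol{a}}(\zeta)(\zeta-z)^{-1}$, and then runs a cascade of vanishings ending in $1=T(\boldsymbol{a})u_{0}=0$. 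You instead take an abstract null vector $(\alpha,\beta)$ of the singular $2\times 2$ matrix of boundary values, so that $w$ vanishes on the whole fiber $\phi^{-1}(\phi(\zeta_{0}))\cap D_{-}=\{\zeta_{0},\zeta_{0}^{-1}\}$, and you establish $\mathfrak{p}_{+}\tilde{w}=0$ by a residue computation rather than by explicit formulas; this buys you a cleaner statement of the mechanism (a section of $W_{\boldsymbol{a}}$ vanishing on a full $\phi$-fiber descends, after division by $\phi-\phi(\zeta_{0})$, to an element of $\ker\mathfrak{p}_{+}|_{W_{\boldsymbol{a}}}$) and avoids the paper's elimination step, with the contradiction reached at $(\alpha,\beta)=(0,0)$ rather than at $1=0$. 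The residue step checks out: with the stated orientations of $C_{1}$, $C_{2}$, the projection $\mathfrak{p}_{+}(r\,\mathfrak{p}_{-}w)$ is exactly the sum of the principal parts at $\zeta_{0}$ and $\zeta_{0}^{-1}$, since the remainder is analytic on $D_{-}$ and $o(1)$ at infinity, hence lies in $H_{-}$, and the residues $(\cdot)(\zeta_{0}^{\pm1})/\phi'(\zeta_{0}^{\pm1})$ are as you state. Your separate treatment of $\zeta_{0}=0$, where $\Delta_{\boldsymbol{a}}(0)=1$ because $\varphi_{\boldsymbol{a}}^{(0)}$ and $\varphi_{\boldsymbol{a}}^{(-1)}$ decay at infinity, covers a degeneracy the paper does not address; note also that $\zeta_{0}=\pm1$ cannot occur since the unit circle lies in $\Sigma_{\lambda_{0}}\subset D_{+}$, so the fiber genuinely consists of two distinct points.
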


\begin{proof}
Set $u_{0}=T\left(  \boldsymbol{a}\right)  ^{-1}1$, $u_{-1}=T\left(
\boldsymbol{a}\right)  ^{-1}z^{-1}$. Since%
\[
\varphi_{\boldsymbol{a}}^{\left(  0\right)  }=\mathfrak{p}_{-}\left(
\boldsymbol{a}u_{0}\right)  \text{, \ \ \ \ }\varphi_{\boldsymbol{a}}^{\left(
-1\right)  }=\mathfrak{p}_{-}\left(  \boldsymbol{a}u_{-1}\right)
\]
holds, one has%
\[
\boldsymbol{a}u_{0}\left(  z\right)  =1+\varphi_{\boldsymbol{a}}^{\left(
0\right)  }\left(  z\right)  \text{,}\ \ \ \boldsymbol{a}u_{-1}\left(
z\right)  =z^{-1}+\varphi_{\boldsymbol{a}}^{\left(  -1\right)  }\left(
z\right)  \text{.}%
\]
Hence, observing $r\left(  z\right)  =\left(  \zeta-z\right)  ^{-1}\left(
\zeta-z^{-1}\right)  ^{-1}$ satisfies $r\left(  z\right)  =r\left(
z^{-1}\right)  $, we have a decomposition into $H_{\pm}$:%
\begin{align*}
\boldsymbol{a}ru_{0}\left(  z\right)   &  =r\boldsymbol{a}u_{0}\left(
z\right)  =r\left(  z\right)  \left(  1+\varphi_{\boldsymbol{a}}^{\left(
0\right)  }\left(  z\right)  \right) \\
&  =\left(  \dfrac{\varphi_{\boldsymbol{a}}^{\left(  0\right)  }\left(
z\right)  -\varphi_{\boldsymbol{a}}^{\left(  0\right)  }\left(  \zeta
^{-1}\right)  }{\left(  \zeta-z^{-1}\right)  \left(  \zeta-z\right)  }%
-\dfrac{\varphi_{\boldsymbol{a}}^{\left(  0\right)  }\left(  \zeta\right)
-\varphi_{\boldsymbol{a}}^{\left(  0\right)  }\left(  \zeta^{-1}\right)
}{\left(  \zeta-\zeta^{-1}\right)  \left(  \zeta-z\right)  }\right) \\
&  \text{ \ \ \ \ \ \ \ \ \ }+\left(  \dfrac{\varphi_{\boldsymbol{a}}^{\left(
0\right)  }\left(  \zeta\right)  -\varphi_{\boldsymbol{a}}^{\left(  0\right)
}\left(  \zeta^{-1}\right)  }{\left(  \zeta-\zeta^{-1}\right)  \left(
\zeta-z\right)  }+\dfrac{1+\varphi_{\boldsymbol{a}}^{\left(  0\right)
}\left(  \zeta^{-1}\right)  }{\left(  \zeta-z\right)  \left(  \zeta
-z^{-1}\right)  }\right)  \text{,}%
\end{align*}
which yields%
\begin{align*}
\left(  T\left(  \boldsymbol{a}\right)  ru_{0}\right)  \left(  z\right)   &
=\dfrac{\varphi_{\boldsymbol{a}}^{\left(  0\right)  }\left(  \zeta\right)
-\varphi_{\boldsymbol{a}}^{\left(  0\right)  }\left(  \zeta^{-1}\right)
}{\left(  \zeta-\zeta^{-1}\right)  \left(  \zeta-z\right)  }+\dfrac
{1+\varphi_{\boldsymbol{a}}^{\left(  0\right)  }\left(  \zeta^{-1}\right)
}{\left(  \zeta-z\right)  \left(  \zeta-z^{-1}\right)  }\\
&  =\left(  z^{-1}\dfrac{1+\varphi_{\boldsymbol{a}}^{\left(  0\right)
}\left(  \zeta^{-1}\right)  }{\zeta-z^{-1}}+\zeta\dfrac{1+\varphi
_{\boldsymbol{a}}^{\left(  0\right)  }\left(  \zeta\right)  }{\zeta-z}\right)
\dfrac{1}{\zeta^{2}-1}\text{.}%
\end{align*}
Similarly we have%
\[
\left(  T\left(  \boldsymbol{a}\right)  ru_{-1}\right)  \left(  z\right)
=\left(  z^{-1}\dfrac{\zeta+\varphi_{\boldsymbol{a}}^{\left(  -1\right)
}\left(  \zeta^{-1}\right)  }{\zeta-z^{-1}}+\zeta\dfrac{\zeta^{-1}%
+\varphi_{\boldsymbol{a}}^{\left(  -1\right)  }\left(  \zeta\right)  }%
{\zeta-z}\right)  \dfrac{1}{\zeta^{2}-1}\text{.}%
\]
If we regard $-\zeta\left(  \zeta-z\right)  ^{-1}$ is unknown in these two
identities, one obtains%
\[
T\left(  \boldsymbol{a}\right)  \left(  \left(  \zeta+\varphi_{\boldsymbol{a}%
}^{\left(  -1\right)  }\left(  \zeta^{-1}\right)  \right)  ru_{0}-\left(
1+\varphi_{\boldsymbol{a}}^{\left(  0\right)  }\left(  \zeta^{-1}\right)
\right)  ru_{-1}\right)  =\dfrac{\Delta_{\boldsymbol{a}}\left(  \zeta\right)
}{\zeta-z}\text{.}%
\]
If $\Delta_{\boldsymbol{a}}\left(  \zeta\right)  =0$ holds for some $\zeta\in
D_{-}$, then%
\[
\left(  \zeta+\varphi_{\boldsymbol{a}}^{\left(  -1\right)  }\left(  \zeta
^{-1}\right)  \right)  u_{0}-\left(  1+\varphi_{\boldsymbol{a}}^{\left(
0\right)  }\left(  \zeta^{-1}\right)  \right)  u_{-1}=0
\]
follows. Applying $T\left(  \boldsymbol{a}\right)  $ yields%
\[
\left(  \zeta+\varphi_{\boldsymbol{a}}^{\left(  -1\right)  }\left(  \zeta
^{-1}\right)  \right)  -\left(  1+\varphi_{\boldsymbol{a}}^{\left(  0\right)
}\left(  \zeta^{-1}\right)  \right)  z^{-1}=0\text{,}%
\]
which implies%
\[
\zeta+\varphi_{\boldsymbol{a}}^{\left(  -1\right)  }\left(  \zeta^{-1}\right)
=1+\varphi_{\boldsymbol{a}}^{\left(  0\right)  }\left(  \zeta^{-1}\right)
=0\text{.}%
\]
Similarly one has $\zeta^{-1}+\varphi_{\boldsymbol{a}}^{\left(  -1\right)
}\left(  \zeta\right)  =1+\varphi_{\boldsymbol{a}}^{\left(  0\right)  }\left(
\zeta\right)  =0$. Hence $T\left(  \boldsymbol{a}\right)  ru_{0}=0$ and
$u_{0}=0$ follow. But this implies $1=T\left(  \boldsymbol{a}\right)  u_{0}%
=0$.\bigskip
\end{proof}

Assuming $z^{n}\boldsymbol{a}\in\boldsymbol{A}^{inv}\left(  C\right)  $ for
any $n\in\mathbb{Z}$ we define%
\begin{equation}
f_{n}=\boldsymbol{a}T\left(  z^{n}\boldsymbol{a}\right)  ^{-1}1\in
W_{\boldsymbol{a}}\text{.} \label{16}%
\end{equation}
Then

\begin{lemma}
\label{l4}$1+\varphi_{z^{n}\boldsymbol{a}}^{\left(  0\right)  }\left(
0\right)  \neq0$ is valid for any $n\in\mathbb{Z}$ and $\left\{
f_{n}\right\}  _{n\in\mathbb{Z}}$ satisfies a recurrence relation
\[
\dfrac{1+\varphi_{z^{n}\boldsymbol{a}}^{\left(  0\right)  }(0)}{1+\varphi
_{z^{n+1}\boldsymbol{a}}^{\left(  0\right)  }\left(  0\right)  }%
f_{n+1}+\left(  \varphi_{\widetilde{z^{n}\boldsymbol{a}}}^{\left(  -1\right)
}\left(  0\right)  -\varphi_{\widetilde{z^{n-1}\boldsymbol{a}}}^{\left(
-1\right)  }\left(  0\right)  \right)  f_{n}+f_{n-1}=\left(  z+z^{-1}\right)
f_{n}\text{.}%
\]

\end{lemma}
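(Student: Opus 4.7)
My starting point is to identify $f_n$ explicitly. Since $u_n:=T(z^n\boldsymbol{a})^{-1}1\in H_+$ satisfies $\mathfrak{p}_+(z^n\boldsymbol{a}u_n)=1$, setting $\psi_n:=\varphi_{z^n\boldsymbol{a}}^{(0)}=\mathfrak{p}_-(z^n\boldsymbol{a}u_n)\in H_-$ yields $z^n\boldsymbol{a}u_n=1+\psi_n$, i.e.\ $z^nf_n=1+\psi_n$, so
\[
f_n=z^{-n}(1+\psi_n)\in W_{\boldsymbol{a}},\qquad 1+\psi_n\in W_{z^n\boldsymbol{a}}=z^nW_{\boldsymbol{a}}.
\]
With this in hand, the non-vanishing $1+\psi_n(0)\neq 0$ is proved by contradiction: if $1+\psi_n(0)=0$, then $(1+\psi_n)/z$ is analytic on $D_-^2$ (the would-be pole at $0$ is cancelled) and $O(z^{-1})$ at $\infty$ on $D_-^1$, so $(1+\psi_n)/z\in H_-$; on the other hand $(1+\psi_n)/z\in z^{-1}W_{z^n\boldsymbol{a}}=W_{z^{n-1}\boldsymbol{a}}$, and the bijectivity $\mathfrak{p}_+\colon W_{z^{n-1}\boldsymbol{a}}\to H_+$ (guaranteed by $z^{n-1}\boldsymbol{a}\in\boldsymbol{A}^{inv}(C)$) forces $W_{z^{n-1}\boldsymbol{a}}\cap H_-=\{0\}$, whence $1+\psi_n\equiv 0$, contradicting $\mathfrak{p}_+(1+\psi_n)=1$.

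For the recurrence, since each $f_k\in W_{\boldsymbol{a}}$ and $(z+z^{-1})W_{\boldsymbol{a}}\subset W_{\boldsymbol{a}}$, both sides of the claimed identity lie in $W_{\boldsymbol{a}}$; after multiplying through by $z^n$ they lie in $W_{z^n\boldsymbol{a}}$, so by the bijectivity $\mathfrak{p}_+\colon W_{z^n\boldsymbol{a}}\to H_+$ it suffices to match their $H_+$-projections. The two building blocks I need are $\mathfrak{p}_+(z\psi_k)=\lim_{z\to\infty}z\psi_k(z)=\varphi_{\widetilde{z^k\boldsymbol{a}}}^{(-1)}(0)$, which follows from Lemma \ref{l2}(i) exactly as used in the proof of its part (ii), and $\mathfrak{p}_+(z^{-1}\psi_k)=z^{-1}\psi_k(0)$, obtained by isolating the simple pole of $\psi_k/z$ at $0\in D_-^2$. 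Together these give
\[
\mathfrak{p}_+\bigl((z+z^{-1})(1+\psi_n)\bigr)=z+z^{-1}+\varphi_{\widetilde{z^n\boldsymbol{a}}}^{(-1)}(0)+z^{-1}\psi_n(0),
\]
while the (multiplied) right-hand side projects to
\[
\alpha z^{-1}(1+\psi_{n+1}(0))+\beta+\gamma\bigl(z+\varphi_{\widetilde{z^{n-1}\boldsymbol{a}}}^{(-1)}(0)\bigr),
\]
where $\alpha,\beta,\gamma$ stand for the three coefficients in the statement. Matching coefficients of $z$, $1$, and $z^{-1}$ in turn yields $\gamma=1$, $\beta=\varphi_{\widetilde{z^n\boldsymbol{a}}}^{(-1)}(0)-\varphi_{\widetilde{z^{n-1}\boldsymbol{a}}}^{(-1)}(0)$, and $\alpha=(1+\psi_n(0))/(1+\psi_{n+1}(0))$, the last ratio being meaningful precisely because of the non-vanishing.

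The step I expect to be the main obstacle is the non-vanishing $1+\psi_n(0)\neq 0$: it is what makes the recurrence coefficient well-defined, and it is the only place where the full hypothesis $z^m\boldsymbol{a}\in\boldsymbol{A}^{inv}(C)$ for \emph{every} $m\in\mathbb{Z}$ (here $m=n-1$) is genuinely used. Once it is in place, the rest is bookkeeping of $H_\pm$-projections via the tools already developed in Lemmas \ref{l1} and \ref{l2}.
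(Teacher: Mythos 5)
Your proof is correct and follows essentially the same route as the paper: identify $z^n f_k$ with $1+\varphi_{z^k\boldsymbol{a}}^{(0)}$ up to a power of $z$, compute $\mathfrak{p}_+$-projections using the pole at $0$ and the behaviour at $\infty$ (with Lemma \ref{l2}(i) supplying $\lim_{z\to\infty}z\varphi^{(0)}_{z^k\boldsymbol{a}}(z)=\varphi^{(-1)}_{\widetilde{z^k\boldsymbol{a}}}(0)$), and conclude by injectivity of $\mathfrak{p}_+$ on $W_{z^n\boldsymbol{a}}$; your non-vanishing argument via $W_{z^{n-1}\boldsymbol{a}}\cap H_-=\{0\}$ is the paper's argument in a slightly different guise. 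The only cosmetic difference is that you match projections of both sides of the recurrence directly, whereas the paper first records the identities (\ref{17}) and then invokes Lemma \ref{l2}(ii) to eliminate $\varphi_{z^n\boldsymbol{a}}^{(1)}$.
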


\begin{proof}
Observe%
\[
z^{n}f_{n}=z^{n}\boldsymbol{a}T\left(  z^{n}\boldsymbol{a}\right)
^{-1}1=1+\varphi_{z^{n}\boldsymbol{a}}^{\left(  0\right)  }\text{.}%
\]
Moreover $z^{n}f_{n+1}$, $z^{n}f_{n-1}\in W_{z^{n}\boldsymbol{a}}$ have
decompositions into $H_{\pm}$:%
\begin{align*}
z^{n}f_{n+1}  &  =z^{-1}\left(  1+\varphi_{z^{n+1}\boldsymbol{a}}^{\left(
0\right)  }\left(  0\right)  \right)  +z^{-1}\left(  \varphi_{z^{n+1}%
\boldsymbol{a}}^{\left(  0\right)  }-\varphi_{z^{n+1}\boldsymbol{a}}^{\left(
0\right)  }\left(  0\right)  \right) \\
z^{n}f_{n-1}  &  =\left(  z+\lim_{z\rightarrow\infty}z\varphi_{z^{n-1}%
\boldsymbol{a}}^{\left(  0\right)  }\left(  z\right)  \right)  +\left(
z\varphi_{z^{n-1}\boldsymbol{a}}^{\left(  0\right)  }-\lim_{z\rightarrow
\infty}z\varphi_{z^{n-1}\boldsymbol{a}}^{\left(  0\right)  }\left(  z\right)
\right) \\
&  =\left(  z+\varphi_{\widetilde{z^{n-1}\boldsymbol{a}}}^{\left(  -1\right)
}\left(  0\right)  \right)  +\left(  z\varphi_{z^{n-1}\boldsymbol{a}}^{\left(
0\right)  }-\varphi_{\widetilde{z^{n-1}\boldsymbol{a}}}^{\left(  -1\right)
}\left(  0\right)  \right)  \text{.}%
\end{align*}
Therefore, we have identities:%
\begin{equation}
\left\{
\begin{array}
[c]{l}%
z^{n}f_{n}=1+\varphi_{z^{n}\boldsymbol{a}}^{\left(  0\right)  }\\
z^{n}f_{n+1}=\left(  1+\varphi_{z^{n+1}\boldsymbol{a}}^{\left(  0\right)
}\left(  0\right)  \right)  \left(  z^{-1}+\varphi_{z^{n}\boldsymbol{a}%
}^{\left(  -1\right)  }\right) \\
z^{n}f_{n-1}=z+\varphi_{z^{n}\boldsymbol{a}}^{\left(  1\right)  }%
+\varphi_{\widetilde{z^{n-1}\boldsymbol{a}}}^{\left(  -1\right)  }\left(
0\right)  \left(  1+\varphi_{z^{n}\boldsymbol{a}}^{\left(  0\right)  }\right)
\end{array}
\text{.}\right.  \label{17}%
\end{equation}
If $1+\varphi_{z^{n+1}\boldsymbol{a}}^{\left(  0\right)  }\left(  0\right)
=0$ for some $n\in\mathbb{Z}$, then the second identity implies%
\[
1+\varphi_{z^{n+1}\boldsymbol{a}}^{\left(  0\right)  }=f_{n+1}=0
\]
identically, which shows $\varphi_{z^{n+1}\boldsymbol{a}}^{\left(  0\right)
}=-1$. This is impossible since $\varphi_{z^{n+1}\boldsymbol{a}}^{\left(
0\right)  }\in H_{-}$ and $-1\in H_{+}$, hence $1+\varphi_{z^{n+1}%
\boldsymbol{a}}^{\left(  0\right)  }\left(  0\right)  \neq0$ holds for any
$n\in\mathbb{Z}$. Applying (ii) of Lemma \ref{l2} for $n=0$ and $z^{n}%
\boldsymbol{a}$ one has%
\[
z+\varphi_{z^{n}\boldsymbol{a}}^{\left(  1\right)  }=\left(  z+z^{-1}%
-\varphi_{\widetilde{z^{n}\boldsymbol{a}}}^{\left(  -1\right)  }\left(
0\right)  \right)  \left(  1+\varphi_{z^{n}\boldsymbol{a}}^{\left(  0\right)
}\right)  -\left(  1+\varphi_{z^{n}\boldsymbol{a}}^{\left(  0\right)
}(0)\right)  \left(  z^{-1}+\varphi_{z^{n}\boldsymbol{a}}^{\left(  -1\right)
}\right)  \text{.}%
\]
This combined with (\ref{17}) completes the proof.
\end{proof}

\section{A subclass $\boldsymbol{M}\left(  C\right)  $ of $\boldsymbol{A}%
^{inv}\left(  C\right)  $}

The invertibility of $T\left(  \boldsymbol{a}\right)  $ is crucial since it is
equivalent to the non-existence of singularities of the flow. In this section
we introduce a subclass $\boldsymbol{M}\left(  C\right)  $ of $\boldsymbol{A}%
^{inv}\left(  C\right)  $.

Let $\boldsymbol{M}\left(  C\right)  $ be the set of all symbols
$\boldsymbol{m}=(m_{1},m_{2})$ satisfying%
\begin{equation}%
\begin{array}
[c]{ll}%
\text{(i)} & m_{j}\text{ is analytic in a neighborhood of }\overline{D}%
_{-}\text{, and }m_{j}=\overline{m}_{j}\text{ for }j=1\text{, }2\text{,}\\
\text{(ii)} & m_{1}\left(  0\right)  =\widetilde{m}_{1}\left(  0\right)
=1\text{, \ }m_{2}(0)=\widetilde{m}_{2}(0)=0,\\
\text{(iii)} & m_{1}\left(  z\right)  \widetilde{m}_{1}\left(  z\right)
-m_{2}\left(  z\right)  \widetilde{m}_{2}\left(  z\right)  \neq0\text{ on
}\overline{D}_{-}\text{,}%
\end{array}
\label{18-1}%
\end{equation}
where $\widetilde{f}\left(  z\right)  =f\left(  z^{-1}\right)  $. For
$\boldsymbol{m}=\left(  m_{1},m_{2}\right)  $, \ $\boldsymbol{n}=\left(
n_{1},n_{2}\right)  \in\boldsymbol{M}\left(  C\right)  $ define%
\[
\boldsymbol{m}\cdot\boldsymbol{n}=\left(  m_{1}n_{1}+m_{2}\widetilde{n}%
_{2},m_{1}n_{2}+m_{2}\widetilde{n}_{1}\right)  \text{.}%
\]
Since we have%
\begin{align*}
&  \left(  \boldsymbol{m}\cdot\boldsymbol{n}\right)  _{1}\left(  z\right)
\left(  \widetilde{\boldsymbol{m}\cdot\boldsymbol{n}}\right)  _{1}\left(
z\right)  -\left(  \boldsymbol{m}\cdot\boldsymbol{n}\right)  _{2}\left(
z\right)  \left(  \widetilde{\boldsymbol{m}\cdot\boldsymbol{n}}\right)
_{2}\left(  z\right) \\
&  =\left(  m_{1}\left(  z\right)  \widetilde{m}_{1}-m_{2}\widetilde{m}%
_{2}\right)  \left(  n_{1}\widetilde{n}_{1}-n_{2}\widetilde{n}_{2}\right)
\neq0\text{,}%
\end{align*}
clearly $\boldsymbol{m}\cdot\boldsymbol{n}\in\boldsymbol{M}\left(  C\right)  $
is valid. One has

\begin{lemma}
\label{l5}$\boldsymbol{M}\left(  C\right)  $ is a group by the operation
$\boldsymbol{m}\cdot\boldsymbol{n}$, whose identity $\boldsymbol{1}$ is
$\left(  1,0\right)  $ and inverse is%
\[
\boldsymbol{m}^{-1}=\left(  \dfrac{\widetilde{m}_{1}}{m_{1}\widetilde{m}%
_{1}-m_{2}\widetilde{m}_{2}},\dfrac{-m_{2}}{m_{1}\widetilde{m}_{1}%
-m_{2}\widetilde{m}_{2}}\right)  \text{.}%
\]
For $\boldsymbol{m}$, $\boldsymbol{n}\in\boldsymbol{M}\left(  C\right)  $ it
holds that%
\begin{equation}
T\left(  \boldsymbol{m}\cdot\boldsymbol{n}\right)  =T\left(  \boldsymbol{m}%
\right)  T\left(  \boldsymbol{n}\right)  \text{,} \label{18}%
\end{equation}
and hence $\boldsymbol{M}\left(  C\right)  \subset\boldsymbol{A}^{inv}\left(
C\right)  $.
\end{lemma}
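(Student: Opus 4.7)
The plan is to first identify the operation $\boldsymbol{m}\cdot\boldsymbol{n}$ with composition of the symbol action on $L^{2}(C)$. Substituting $v=\boldsymbol{n}u$ into $(\boldsymbol{m}v)(\lambda)=m_{1}(\lambda)v(\lambda)+m_{2}(\lambda)\lambda^{-1}v(\lambda^{-1})$ and collecting the coefficients of $u(\lambda)$ and $\lambda^{-1}u(\lambda^{-1})$ yields
$$(\boldsymbol{m}\cdot\boldsymbol{n})u \;=\; \boldsymbol{m}(\boldsymbol{n}u) \qquad\text{for every } u\in L^{2}(C).$$
Associativity is then automatic from function composition, and $\boldsymbol{1}=(1,0)$ is immediately the two-sided identity. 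Closure of $\boldsymbol{M}(C)$ under $\cdot$ for condition (iii) is already in the excerpt; conditions (i) and (ii) follow by routine algebra from the boundary values of $m_{j}$ and $n_{j}$ at $0$ and $\infty$.

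For the inverse formula, direct multiplication verifies $\boldsymbol{m}\cdot\boldsymbol{m}^{-1}=\boldsymbol{1}$, using the $\sim$-invariance $\tilde D = D$ of the determinant $D:=m_{1}\tilde m_{1}-m_{2}\tilde m_{2}$ (itself an immediate consequence of $\widetilde{\tilde f}=f$). Membership $\boldsymbol{m}^{-1}\in\boldsymbol{M}(C)$ then uses: the nonvanishing of $D$ on $\overline{D}_{-}$ (condition (iii) for $\boldsymbol{m}$) together with the symmetry of $\overline{D}_{-}$ under $z\mapsto z^{-1}$ inherited from (\ref{5}), so that $\tilde m_{1}/D$ and $m_{2}/D$ are analytic there; the values $D(0)=D(\infty)=1$ for (ii); and the identity $(\boldsymbol{m}^{-1})_{1}\widetilde{(\boldsymbol{m}^{-1})_{1}}-(\boldsymbol{m}^{-1})_{2}\widetilde{(\boldsymbol{m}^{-1})_{2}} = 1/D \neq 0$ for (iii).

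The core step is identity (\ref{18}). For $u\in H_{+}$, splitting $\boldsymbol{n}u=\mathfrak{p}_{+}(\boldsymbol{n}u)+\mathfrak{p}_{-}(\boldsymbol{n}u)$ and using the composition identity gives
$$T(\boldsymbol{m}\cdot\boldsymbol{n})u-T(\boldsymbol{m})T(\boldsymbol{n})u \;=\; \mathfrak{p}_{+}\bigl[\boldsymbol{m}\,\mathfrak{p}_{-}(\boldsymbol{n}u)\bigr],$$
so (\ref{18}) is equivalent to the invariance $\boldsymbol{m}H_{-}\subset H_{-}$ for $\boldsymbol{m}\in\boldsymbol{M}(C)$. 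I expect this analytic invariance to be the main obstacle. To prove it, I would test on a rational $f\in H_{-}$ and expand $(\boldsymbol{m}f)(\lambda)=m_{1}(\lambda)f(\lambda)+m_{2}(\lambda)\lambda^{-1}f(\lambda^{-1})$. Because $\overline{D}_{-}$ is $z\mapsto z^{-1}$-symmetric with $D_{-}^{1}\leftrightarrow D_{-}^{2}$, the function $f(\lambda^{-1})$ is rational on each component of $D_{-}$, and analyticity of $m_{1},m_{2}$ on $\overline{D}_{-}$ handles every point except $\lambda=\infty\in D_{-}^{1}$ and $\lambda=0\in D_{-}^{2}$. At $\lambda=\infty$, $m_{1}(\infty)f(\infty)=1\cdot 0 = 0$ handles the first summand and $\tilde m_{2}(0)=0$ kills the second. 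At $\lambda=0$, the product $\lambda^{-1}f(\lambda^{-1})$ is only $O(1)$ because $f(\infty)=0$, and this is then multiplied by $m_{2}(\lambda)$; the hypothesis $m_{2}(0)=0$ turns the product into an analytic function vanishing at $0$. Boundedness of $\boldsymbol{m}$ on $L^{2}(C)$ and closedness of $H_{-}$ then extend the inclusion from the dense set of rational elements to all of $H_{-}$.

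Finally, specialising (\ref{18}) to $\boldsymbol{m}\cdot\boldsymbol{m}^{-1}=\boldsymbol{m}^{-1}\cdot\boldsymbol{m}=\boldsymbol{1}$ and noting $T(\boldsymbol{1})=I$ on $H_{+}$ yields $T(\boldsymbol{m})T(\boldsymbol{m}^{-1})=T(\boldsymbol{m}^{-1})T(\boldsymbol{m})=I$, so each $T(\boldsymbol{m})$ is invertible and $\boldsymbol{M}(C)\subset\boldsymbol{A}^{inv}(C)$.
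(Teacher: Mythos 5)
Your proof is correct and follows essentially the same route as the paper: identity (\ref{18}) is reduced to the invariance of $H_{-}$ under multiplication by the components of the symbol (the paper states this as $n_{j}H_{-}\subset H_{-}$ and drops the inner projection $\mathfrak{p}_{+}$ in exactly the same way, using $\mathfrak{p}_{+}R=R\mathfrak{p}_{+}$), and invertibility of $T\left(  \boldsymbol{m}\right)  $ then follows from $T\left(  \boldsymbol{m}\right)  T\left(  \boldsymbol{m}^{-1}\right)  =T\left(  \boldsymbol{1}\right)  =I$. You merely supply more detail on the points the paper declares clear, namely closure under the operation, membership of $\boldsymbol{m}^{-1}$ in $\boldsymbol{M}\left(  C\right)  $, and the behaviour of $\boldsymbol{m}f$ at $0$ and $\infty$.
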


\begin{proof}
The associative law and the form of the inverse are clear. The identity
(\ref{18}) is verified as follows. Let $\boldsymbol{m}=\left(  m_{1}%
,m_{2}\right)  $, \ $\boldsymbol{n}=\left(  n_{1},n_{2}\right)  \in
\boldsymbol{M}\left(  C\right)  $. Then, $n_{1}H_{-}$, $n_{2}H_{-}\subset
H_{-}$ are valid, hence, for $u\in H_{+}$%
\[
\mathfrak{p}_{+}\left(  m_{i}n_{j}u\right)  =\mathfrak{p}_{+}\left(
m_{i}\mathfrak{p}_{+}n_{j}u\right)  +\mathfrak{p}_{+}\left(  m_{i}%
\mathfrak{p}_{-}n_{j}u\right)  =\mathfrak{p}_{+}\left(  m_{i}\mathfrak{p}%
_{+}n_{j}u\right)
\]
holds for $i$, $j=1$, $2$, which together with $\mathfrak{p}_{+}%
R=R\mathfrak{p}_{+}$ implies%
\begin{align*}
T\left(  \boldsymbol{m}\right)  T\left(  \boldsymbol{n}\right)  u  &
=\mathfrak{p}_{+}\left(  m_{1}\mathfrak{p}_{+}\left(  n_{1}u+n_{2}Ru\right)
+m_{2}R\mathfrak{p}_{+}\left(  n_{1}u+n_{2}Ru\right)  \right) \\
&  =\mathfrak{p}_{+}\left(  m_{1}\left(  n_{1}u+n_{2}Ru\right)  \right)
+\mathfrak{p}_{+}\left(  m_{2}R\left(  n_{1}u+n_{2}Ru\right)  \right) \\
&  =\mathfrak{p}_{+}\left(  \left(  m_{1}n_{1}+m_{2}\widetilde{n}_{2}\right)
u+\left(  m_{1}n_{2}+m_{2}\widetilde{n}_{1}\right)  Ru\right)  =T\left(
\boldsymbol{m}\cdot\boldsymbol{n}\right)  u\text{,}%
\end{align*}
which shows (\ref{18}).\bigskip
\end{proof}

$\left\{  \varphi_{\boldsymbol{m}}^{\left(  -1\right)  },\varphi
_{\boldsymbol{m}}^{\left(  0\right)  }\right\}  $ is computable for
$\boldsymbol{m}\in\boldsymbol{M}\left(  C\right)  $.

\begin{lemma}
\label{l6}For $\boldsymbol{m}=\left(  m_{1},m_{2}\right)  \in\boldsymbol{M}%
\left(  C\right)  $ it holds that%
\[
z^{-1}+\varphi_{\boldsymbol{m}}^{\left(  -1\right)  }=z^{-1}m_{1}+m_{2}\text{,
\ \ }1+\varphi_{\boldsymbol{m}}^{\left(  0\right)  }=m_{1}+m_{2}z^{-1}\text{.}%
\]

\end{lemma}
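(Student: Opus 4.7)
The plan is to explicitly identify $T(\boldsymbol{m})^{-1} 1$ and $T(\boldsymbol{m})^{-1} z^{-1}$ by showing both are fixed by $T(\boldsymbol{m})$, and then just read off the formulas for $\varphi_{\boldsymbol{m}}^{(0)}$ and $\varphi_{\boldsymbol{m}}^{(-1)}$ from the definition.

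First I would compute, from the definition (\ref{9}),
\[
\boldsymbol{m}\cdot 1 = m_1(z) + m_2(z)\,z^{-1},\qquad \boldsymbol{m}\cdot z^{-1} = m_1(z)\,z^{-1} + m_2(z).
\]
The heart of the argument is then to prove that these two functions lie in $1 + H_-$ and $z^{-1} + H_-$ respectively, so that $T(\boldsymbol{m})\,1 = 1$ and $T(\boldsymbol{m})\,z^{-1} = z^{-1}$.

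For the membership in $H_-$, I would use the conditions defining $\boldsymbol{M}(C)$ carefully. Condition (i) says $m_1,m_2$ are analytic on $\overline{D}_-$ (so no poles in $D_-^1$ or $D_-^2$); condition (ii) together with $\widetilde m_j(0)=m_j(\infty)$ gives $m_1(\infty)=1$ and $m_2(\infty)=0$. Hence $m_1 - 1 \in H_-$ and $m_2 \in H_-$. For the $z^{-1}$ factor I must be more careful on the bounded component $D_-^2$, which contains $0$: the vanishing conditions $m_1(0)=1$ and $m_2(0)=0$ ensure that $z^{-1}(m_1-1)$ and $z^{-1}m_2$ are analytic at $0$ (and decay at $\infty$), so they belong to $H_-$. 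Meanwhile $z^{-1}$ itself lies in $H_+$ because $0\notin D_+$, so $z^{-1}$ is a rational function without poles in $D_+$. Putting these decompositions together gives
\[
m_1 + m_2 z^{-1} = 1 + \bigl[(m_1-1) + m_2 z^{-1}\bigr],\qquad m_1 z^{-1} + m_2 = z^{-1} + \bigl[z^{-1}(m_1-1) + m_2\bigr],
\]
with the first summand on each right-hand side in $H_+$ and the bracketed part in $H_-$.

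Applying $\mathfrak{p}_+$ yields $T(\boldsymbol{m})\,1 = 1$ and $T(\boldsymbol{m})\,z^{-1}=z^{-1}$, so $T(\boldsymbol{m})^{-1}1=1$ and $T(\boldsymbol{m})^{-1}z^{-1}=z^{-1}$. Applying $\mathfrak{p}_-$ to the same two decompositions gives
\[
\varphi_{\boldsymbol{m}}^{(0)} = (m_1-1) + m_2 z^{-1},\qquad \varphi_{\boldsymbol{m}}^{(-1)} = z^{-1}(m_1-1) + m_2,
\]
which rearrange exactly to the asserted identities.

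The only genuinely delicate point is handling the bounded component $D_-^2$ containing the origin: one has to observe that the normalizations $m_1(0)=1$, $m_2(0)=0$ are precisely what is needed to keep $z^{-1}(m_1-1)$ and $z^{-1}m_2$ regular at $0$, so that they lie in $H_-$ rather than only in $L^2(C)$. Everything else is a bookkeeping exercise with the definition of $\boldsymbol{m}\cdot u$ and the Hardy projections $\mathfrak{p}_\pm$.
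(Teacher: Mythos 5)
Your proposal is correct and follows essentially the same route as the paper: both identify $T(\boldsymbol{m})1=1$ and $T(\boldsymbol{m})z^{-1}=z^{-1}$ by projecting $m_{1}+m_{2}z^{-1}$ and $m_{1}z^{-1}+m_{2}$ onto $H_{+}$ (the paper records the projections as $1+m_{2}(0)z^{-1}$ and $m_{1}(0)z^{-1}$, which is exactly your observation that the normalizations at $0$ and $\infty$ place the remainders in $H_{-}$), and then apply $T(\boldsymbol{m})^{-1}$ and multiply by $\boldsymbol{m}$ to read off $\varphi_{\boldsymbol{m}}^{(0)}$ and $\varphi_{\boldsymbol{m}}^{(-1)}$.
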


\begin{proof}
Generally, for $\boldsymbol{m}=\left(  m_{1},m_{2}\right)  \in\boldsymbol{M}%
\left(  C\right)  $ one has%
\[
\left\{
\begin{array}
[c]{l}%
T\left(  \boldsymbol{m}\right)  1=\mathfrak{p}_{+}\left(  m_{1}+m_{2}%
z^{-1}\right)  =1+m_{2}\left(  0\right)  z^{-1}=1\\
T\left(  \boldsymbol{m}\right)  z^{-1}=\mathfrak{p}_{+}\left(  m_{1}%
z^{-1}+m_{2}\right)  =m_{1}\left(  0\right)  z^{-1}=z^{-1}%
\end{array}
\right.  \text{,}%
\]
which implies%
\[
T\left(  \boldsymbol{m}\right)  ^{-1}1=1\text{, \ }T\left(  \boldsymbol{m}%
\right)  ^{-1}z^{-1}=z^{-1}\text{.}%
\]
Multiplying $\boldsymbol{m}$ to the both sides, one has the conclusion.
\end{proof}

\section{Tau function}

Tau function was a key notion in Sato theory. In this article this quantity is
used to examine $g\boldsymbol{a}\in\boldsymbol{A}^{inv}\left(  C\right)  $ for
$g\in\Gamma$ and to express the Toda flow.

Recall that the domain $D_{+}$ is chosen suitably so that $D_{+}$ contains
neither poles nor zeros of $g\in\Gamma$. We have a new symbol $g\boldsymbol{a}%
$ for $g\in\Gamma$, $\boldsymbol{a}\in\boldsymbol{A}^{inv}\left(  C\right)  $.
However, we do not know if $g\boldsymbol{a}\in\boldsymbol{A}^{inv}\left(
C\right)  $ is valid or not, namely the invertibility of $T\left(
g\boldsymbol{a}\right)  $ does not always hold, which is certified by a
Fredholm determinant $\det\left(  g^{-1}T\left(  g\boldsymbol{a}\right)
T\left(  \boldsymbol{a}\right)  ^{-1}\right)  $ if it is well-defined. For
this purpose we define%
\[
\left\{
\begin{array}
[c]{l}%
S_{\boldsymbol{a}}u=\mathfrak{p}_{-}\left(  \boldsymbol{a}u\right)  \text{
\ \ for \ }u\in H_{+}\\
H_{g}v=\mathfrak{p}_{+}\left(  gv\right)  \text{ \ \ for \ }v\in H_{-}%
\end{array}
\right.  \text{.}%
\]
Note for $v\in H_{-}$ and $z\in D_{+}$%
\begin{equation}
H_{g}v\left(  z\right)  =\dfrac{1}{2\pi i}\int_{C}\dfrac{g\left(
\lambda\right)  v\left(  \lambda\right)  }{\lambda-z}d\lambda=\dfrac{1}{2\pi
i}\int_{C}\dfrac{\left(  g\left(  \lambda\right)  -g\left(  z\right)  \right)
v\left(  \lambda\right)  }{\lambda-z}d\lambda\text{,} \label{19}%
\end{equation}
which means $H_{g}$ has a smooth kernel.

\begin{lemma}
\label{l7}For any bounded symbol and $g\in\Gamma$ one has%
\[
T\left(  g\boldsymbol{a}\right)  =gT\left(  \boldsymbol{a}\right)
+H_{g}S_{\boldsymbol{a}}\text{,}%
\]
and $H_{g}S_{\boldsymbol{a}}$ is of trace class.
\end{lemma}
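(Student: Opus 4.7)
My approach is to derive the operator identity by a direct computation based on the decomposition $I=\mathfrak{p}_{+}+\mathfrak{p}_{-}$ applied to $\boldsymbol{a}u$, and then extract the trace class property from the kernel representation already exhibited in (\ref{19}).

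For the identity, I fix $u\in H_{+}$. From (\ref{9}) the scalar $g$ commutes with the vector product $\boldsymbol{a}u$, so $(g\boldsymbol{a})u=g(\boldsymbol{a}u)$. Splitting $\boldsymbol{a}u=T(\boldsymbol{a})u+S_{\boldsymbol{a}}u$ into its $H_{+}$ and $H_{-}$ parts, multiplying by $g$, and projecting with $\mathfrak{p}_{+}$, I would get
\[
T(g\boldsymbol{a})u=\mathfrak{p}_{+}\bigl(gT(\boldsymbol{a})u\bigr)+\mathfrak{p}_{+}\bigl(gS_{\boldsymbol{a}}u\bigr).
\]
The second term is $H_{g}S_{\boldsymbol{a}}u$ by the very definition of $H_{g}$. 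For the first term I would invoke the hypothesis that $D_{+}$ has been chosen so that $g\in\Gamma$ has neither poles nor zeros in a neighborhood of $\overline{D}_{+}$: since $H_{+}$ is the $L^{2}$-closure of rational functions with no poles in $D_{+}$, multiplication by $g$ maps $H_{+}$ into itself, and hence $\mathfrak{p}_{+}(gT(\boldsymbol{a})u)=gT(\boldsymbol{a})u$. This gives the claimed identity.

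For the trace class assertion, the content is essentially in (\ref{19}), which presents $H_{g}$ as an integral operator with kernel
\[
K(z,\lambda)=\frac{1}{2\pi i}\cdot\frac{g(\lambda)-g(z)}{\lambda-z}.
\]
Because $g$ is analytic in a neighborhood of $\overline{D}_{+}$, the apparent diagonal singularity of $K$ is removable and $K$ extends to a jointly analytic function on a neighborhood of $\overline{D}_{+}\times C$. An integral operator with such an analytic kernel on the smooth compact contour $C=C_{1}\cup C_{2}$ (working on each component separately) is trace class, indeed nuclear, by the standard fast-decay estimates for singular values of analytic kernels. Composition with the bounded operator $S_{\boldsymbol{a}}$ then preserves the trace class property.

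The main obstacle, to the extent that there is one, is the trace class conclusion; the identity itself is pure bookkeeping once the $\mathfrak{p}_{\pm}$-decomposition is written down. In the trace class step the crucial algebraic move is the passage from $g(\lambda)v(\lambda)/(\lambda-z)$ to $(g(\lambda)-g(z))v(\lambda)/(\lambda-z)$ in (\ref{19}), which is legal because $v\in H_{-}$ annihilates the added constant-in-$\lambda$ Cauchy integral; after that, analyticity of $g$ across $C$ does all of the work.
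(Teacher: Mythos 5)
Your proof is correct and follows essentially the same route as the paper: the identity comes from the decomposition $\boldsymbol{a}u=\mathfrak{p}_{+}\boldsymbol{a}u+\mathfrak{p}_{-}\boldsymbol{a}u$ together with $gH_{+}\subset H_{+}$, and the trace class property rests on the smoothness of the kernel exhibited in (\ref{19}). The only (cosmetic) difference is how the trace class of a smooth-kernel operator on $C$ is certified: you invoke singular-value decay for analytic kernels directly, while the paper factorizes $H_{g}=(I-\Delta)^{-1}(I-\Delta)H_{g}$ with $(I-\Delta)^{-1}$ trace class on $L^{2}(C)$; both are standard and valid.
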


\begin{proof}
Observe $gu\in H_{+}$ for $u\in H_{+}$. Then, for $u\in H_{+}$ one has
identities%
\begin{align*}
T\left(  g\boldsymbol{a}\right)  u  &  =\mathfrak{p}_{+}\left(
g\boldsymbol{a}u\right)  =\mathfrak{p}_{+}\left(  g\mathfrak{p}_{+}%
\boldsymbol{a}u\right)  +\mathfrak{p}_{+}\left(  g\mathfrak{p}_{-}%
\boldsymbol{a}u\right) \\
&  =g\mathfrak{p}_{+}\boldsymbol{a}u+\mathfrak{p}_{+}\left(  g\mathfrak{p}%
_{-}\boldsymbol{a}u\right)  =gT\left(  \boldsymbol{a}\right)  u+H_{g}%
S_{\boldsymbol{a}}u\text{,}%
\end{align*}
which shows the identity. Let $\Delta$ be the Laplacian $C$. In the identity
\[
H_{g}=\left(  I-\Delta\right)  ^{-1}\left(  I-\Delta\right)  H_{g}%
\]
$\left(  I-\Delta\right)  ^{-1}$ is a trace class operator on $L^{2}\left(
C\right)  $ and $\left(  I-\Delta\right)  H_{g}$ is a bounded operator from
$H_{-}$ to $H_{+}$ due to (\ref{19}), the operator $H_{g}$ turns to be of
trace class. Since $S_{\boldsymbol{a}}$ is a bounded operator from $H_{+}$ to
$H_{-}$, we have the trace class property of $H_{g}S_{\boldsymbol{a}}%
$.\bigskip
\end{proof}

Lemma \ref{l7} implies for $\boldsymbol{a}\in\boldsymbol{A}^{inv}\left(
C\right)  $%
\[
g^{-1}T\left(  g\boldsymbol{a}\right)  T\left(  \boldsymbol{a}\right)
^{-1}=I+g^{-1}H_{g}S_{\boldsymbol{a}}T\left(  \boldsymbol{a}\right)  ^{-1}%
\]
and the second term is of trace class. Then tau-function can be defined by%
\[
\tau_{\boldsymbol{a}}\left(  g\right)  =\det\left(  g^{-1}T\left(
g\boldsymbol{a}\right)  T\left(  \boldsymbol{a}\right)  ^{-1}\right)  \text{.}%
\]
Tau-functions satisfy several properties.

\begin{lemma}
\label{l8}For $\boldsymbol{a}\in\boldsymbol{A}^{inv}\left(  C\right)  $ and
$g$, $g_{1}$, $g_{2}\in\Gamma$ it holds that\newline(i) $\ g\boldsymbol{a}%
\in\boldsymbol{A}^{inv}\left(  C\right)  $ holds if and only if $\tau
_{\boldsymbol{a}}\left(  g\right)  \neq0$.\newline(ii) $\tau_{\boldsymbol{a}%
}\left(  g\right)  =\tau_{\widetilde{\boldsymbol{a}}}\left(  \widetilde
{g}\right)  $ (note $\widetilde{\boldsymbol{a}}\in\boldsymbol{A}^{inv}\left(
C\right)  $ due to Lemma \ref{l1})\newline(iii) If $\tau_{\boldsymbol{a}%
}\left(  g_{1}\right)  \neq0$, it holds that%
\[
\tau_{\boldsymbol{a}}\left(  g_{1}g_{2}\right)  =\tau_{\boldsymbol{a}}\left(
g_{1}\right)  \tau_{g_{1}\boldsymbol{a}}\left(  g_{2}\right)  \text{ (cocycle
property).}%
\]
\ \newline(iv) If $g_{1}$ satisfies $g_{1}\left(  z\right)  =g_{1}\left(
z^{-1}\right)  $, then one has $\tau_{\boldsymbol{a}}\left(  g_{1}\right)
\neq0$ and%
\[
\tau_{\boldsymbol{a}}\left(  g_{1}g_{2}\right)  =\tau_{\boldsymbol{a}}\left(
g_{1}\right)  \tau_{\boldsymbol{a}}\left(  g_{2}\right)  \text{.}%
\]

\end{lemma}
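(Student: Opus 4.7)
My proof would rest on Lemma~\ref{l7} as the central ingredient: it expresses $g^{-1}T(g\boldsymbol{a})T(\boldsymbol{a})^{-1}$ as $I+g^{-1}H_g S_{\boldsymbol{a}}T(\boldsymbol{a})^{-1}$, a perturbation of the identity by a trace-class operator, so that standard Fredholm-determinant machinery applies (nonvanishing iff invertible, multiplicativity, similarity-invariance). I would also use throughout that every $g\in\Gamma$ has no poles nor zeros in $\overline{D}_+$, hence multiplication by $g^{\pm1}$ is a bounded isomorphism of $H_+$.

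\textbf{Parts (i) and (ii).} For (i), I would argue that since multiplication by $g^{-1}$ and $T(\boldsymbol{a})^{-1}$ are both invertible on $H_+$, the operator $g^{-1}T(g\boldsymbol{a})T(\boldsymbol{a})^{-1}$ is invertible iff $T(g\boldsymbol{a})$ is, while boundedness of $g\boldsymbol{a}$ on $C$ is automatic; nonvanishing of the Fredholm determinant then equates $\tau_{\boldsymbol{a}}(g)\ne0$ with $g\boldsymbol{a}\in\boldsymbol{A}^{inv}(C)$. For (ii), I would first check by a one-line computation that $RgR^{-1}=\widetilde g$ as multiplication operators on $L^2(C)$; combined with Lemma~\ref{l1} (which gives $R:H_+\to H_+$ together with $R^{-1}T(\boldsymbol{a})R=T(\widetilde{\boldsymbol{a}})$ and $R^{-1}T(g\boldsymbol{a})R=T(\widetilde g\widetilde{\boldsymbol{a}})$), the conjugation by $R$ carries the operator inside $\tau_{\boldsymbol{a}}(g)$ exactly to the one defining $\tau_{\widetilde{\boldsymbol{a}}}(\widetilde g)$, and similarity-invariance of the determinant closes the argument.

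\textbf{Parts (iii) and (iv).} For (iii), I would set $A_1=g_1^{-1}T(g_1\boldsymbol{a})T(\boldsymbol{a})^{-1}$, $A_2=g_2^{-1}T(g_1g_2\boldsymbol{a})T(g_1\boldsymbol{a})^{-1}$ (well defined by the hypothesis $\tau_{\boldsymbol{a}}(g_1)\ne0$ and part (i)), and $A=(g_1g_2)^{-1}T(g_1g_2\boldsymbol{a})T(\boldsymbol{a})^{-1}$; a direct cancellation gives $A=(g_1^{-1}A_2g_1)A_1$, after which the product rule and similarity-invariance of the Fredholm determinant yield $\tau_{\boldsymbol{a}}(g_1g_2)=\tau_{\boldsymbol{a}}(g_1)\,\tau_{g_1\boldsymbol{a}}(g_2)$. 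For (iv), the essential computation is that when $g_1=\widetilde g_1$, the symmetry $g_1(\lambda)=g_1(\lambda^{-1})$ lets me pull $g_1$ through the $\lambda^{-1}$-slot in~(\ref{9}), producing $(g_1\boldsymbol{a})u=\boldsymbol{a}(g_1u)$ for every $u\in H_+$; since $g_1H_+\subset H_+$, applying $\mathfrak{p}_+$ yields the operator identity $T(g_1\boldsymbol{a})=T(\boldsymbol{a})\,M_{g_1}$ on $H_+$, with $M_{g_1}$ denoting multiplication by $g_1$. Invertibility of $M_{g_1}$ forces $T(g_1\boldsymbol{a})$ invertible, so $\tau_{\boldsymbol{a}}(g_1)\ne0$ by (i). Applying the same factorization with $\boldsymbol{a}$ replaced by $g_2\boldsymbol{a}$, the $M_{g_1}$ factors cancel in $\tau_{g_1\boldsymbol{a}}(g_2)$, giving $\tau_{g_1\boldsymbol{a}}(g_2)=\tau_{\boldsymbol{a}}(g_2)$; combining with (iii) produces the product formula.

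\textbf{Expected obstacle.} The only delicate step is the bookkeeping in (iii): I must verify that $A_1$, $A_2$, and the conjugate $g_1^{-1}A_2g_1$ are all genuinely of the form $I+$ trace class, so that multiplicativity and similarity-invariance of the Fredholm determinant are legitimately applicable. Each verification reduces to a fresh application of Lemma~\ref{l7}, but one has to keep track of the three base points $\boldsymbol{a}$, $g_1\boldsymbol{a}$, and $g_1g_2\boldsymbol{a}$; everything else in the lemma is pure algebra.
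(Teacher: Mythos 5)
Your proposal is correct and follows essentially the same route as the paper: (i) from the standard Fredholm-determinant criterion, (ii) by conjugating with $R$ via Lemma \ref{l1}, (iii) by the factorization $A=(g_{1}^{-1}A_{2}g_{1})A_{1}$ together with multiplicativity and similarity-invariance of the determinant, and (iv) from the identity $T(g_{1}\boldsymbol{a})=T(\boldsymbol{a})g_{1}$ when $g_{1}=\widetilde{g}_{1}$. The trace-class bookkeeping you flag as the delicate point is indeed settled by Lemma \ref{l7} at each base symbol, exactly as in the paper.
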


\begin{proof}
(i) follows from the property of Fredholm determinant (see \cite{si}). Lemma
\ref{l1} immediately implies (ii). To show (iii) assume $\tau_{\boldsymbol{a}%
}\left(  g_{1}\right)  \neq0$. Then, (i) implies that $T\left(  g_{1}%
\boldsymbol{a}\right)  $ is invertible. Then the properties of determinant
show%
\begin{align*}
\tau_{\boldsymbol{a}}\left(  g_{1}g_{2}\right)   &  =\det\left(  g_{2}%
^{-1}g_{1}^{-1}T\left(  g_{1}g_{2}\boldsymbol{a}\right)  T\left(
\boldsymbol{a}\right)  ^{-1}\right) \\
&  =\det\left(  g_{1}^{-1}g_{2}^{-1}T\left(  g_{2}g_{1}\boldsymbol{a}\right)
T\left(  g_{1}\boldsymbol{a}\right)  ^{-1}g_{1}\left(  g_{1}^{-1}T\left(
g_{1}\boldsymbol{a}\right)  T\left(  \boldsymbol{a}\right)  ^{-1}\right)
\right) \\
&  =\det\left(  g_{2}^{-1}T\left(  g_{2}g_{1}\boldsymbol{a}\right)  T\left(
g_{1}\boldsymbol{a}\right)  ^{-1}\right)  \tau_{\boldsymbol{a}}\left(
g_{1}\right)  =\tau_{\boldsymbol{a}}\left(  g_{1}\right)  \tau_{g_{1}%
\boldsymbol{a}}\left(  g_{2}\right)  \text{,}%
\end{align*}
which is (iii). Suppose $g_{1}\left(  z\right)  =g_{1}\left(  z^{-1}\right)  $
holds. Then, the identity $g_{1}\boldsymbol{a}u=\boldsymbol{a}g_{1}u$ implies%
\[
T\left(  g_{1}\boldsymbol{a}\right)  =T\left(  \boldsymbol{a}\right)
g_{1}\text{, \ }T\left(  g_{2}g_{1}\boldsymbol{a}\right)  =T\left(
g_{2}\boldsymbol{a}\right)  g_{1}\text{,}%
\]
which shows $g_{1}\boldsymbol{a}\in\boldsymbol{A}^{inv}\left(  C\right)  $ and%
\[
\tau_{g_{1}\boldsymbol{a}}\left(  g_{2}\right)  =\det\left(  g_{2}%
^{-1}T\left(  g_{2}g_{1}\boldsymbol{a}\right)  T\left(  g_{1}\boldsymbol{a}%
\right)  ^{-1}\right)  =\det\left(  g_{2}^{-1}T\left(  g_{2}\boldsymbol{a}%
\right)  T\left(  \boldsymbol{a}\right)  ^{-1}\right)  =\tau_{\boldsymbol{a}%
}\left(  g_{2}\right)  \text{.}%
\]
This together with (iii) implies (iv).\bigskip
\end{proof}

$\tau_{\boldsymbol{a}}\left(  r\right)  $ for rational functions $r\in\Gamma$
can be expressed by $\left\{  \varphi_{\boldsymbol{a}}^{\left(  0\right)
}\text{, }\varphi_{\boldsymbol{a}}^{\left(  -1\right)  }\right\}  $. Here, we
compute $\tau_{\boldsymbol{a}}\left(  r\right)  $ in 2 simple cases. For
$\zeta\in D_{-}$ set%
\[
q_{\zeta}\left(  z\right)  =\left(  1-\zeta^{-1}z\right)  ^{-1}\in
\Gamma\text{.}%
\]
Then, for $\boldsymbol{a}\in\boldsymbol{A}^{inv}\left(  C\right)  $ one has

\begin{lemma}
\label{l9}(i) \ $\tau_{\boldsymbol{a}}\left(  q_{\zeta}\right)  =1+\varphi
_{\boldsymbol{a}}^{\left(  0\right)  }\left(  \zeta\right)  $\newline(ii)
$\tau_{\boldsymbol{a}}\left(  q_{\zeta_{1}}q_{\zeta_{2}}\right)
=\dfrac{\left(  \zeta_{1}+\varphi_{\boldsymbol{a}}^{\left(  1\right)  }\left(
\zeta_{1}\right)  \right)  \left(  1+\varphi_{\boldsymbol{a}}^{\left(
0\right)  }\left(  \zeta_{2}\right)  \right)  -\left(  1+\varphi
_{\boldsymbol{a}}^{\left(  0\right)  }\left(  \zeta_{1}\right)  \right)
\left(  \zeta_{2}+\varphi_{\boldsymbol{a}}^{\left(  1\right)  }\left(
\zeta_{2}\right)  \right)  }{\zeta_{1}-\zeta_{2}}$.
\end{lemma}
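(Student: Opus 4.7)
The plan is to exploit the representation from Lemma~\ref{l7},
\[
\tau_{\boldsymbol{a}}(g) = \det\bigl(I + g^{-1}H_{g}S_{\boldsymbol{a}}T(\boldsymbol{a})^{-1}\bigr),
\]
together with the fact that when $g$ is a rational function with simple poles in $D_{-}$, the operator $H_{g}$ has finite rank. As a consequence, the Fredholm determinant collapses to a finite-dimensional one. The analytic inputs I will use are the kernel formula (\ref{19}) and the reproducing identity $v(\zeta)=\frac{1}{2\pi i}\int_{C}\frac{v(\lambda)}{\zeta-\lambda}\,d\lambda$, valid for $v\in H_{-}$ and $\zeta\in D_{-}$.

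For (i) I take $g=q_{\zeta}$ and compute $\frac{q_{\zeta}(\lambda)-q_{\zeta}(z)}{\lambda-z}=\frac{\zeta}{(\zeta-\lambda)(\zeta-z)}$, which, combined with the reproducing identity, yields $H_{q_{\zeta}}v(z)=q_{\zeta}(z)\,v(\zeta)$, hence $q_{\zeta}^{-1}H_{q_{\zeta}}v=v(\zeta)\cdot 1$. The operator $A=q_{\zeta}^{-1}H_{q_{\zeta}}S_{\boldsymbol{a}}T(\boldsymbol{a})^{-1}$ is then rank one of the form $u\mapsto (S_{\boldsymbol{a}}T(\boldsymbol{a})^{-1}u)(\zeta)\cdot 1$, so $\det(I+A)=1+\mathrm{tr}(A)=1+(S_{\boldsymbol{a}}T(\boldsymbol{a})^{-1}\cdot 1)(\zeta)=1+\varphi_{\boldsymbol{a}}^{(0)}(\zeta)$.

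For (ii) I apply the same method to $g=q_{\zeta_{1}}q_{\zeta_{2}}$. A partial-fraction expansion and the reproducing identity give
\[
H_{g}v(z)=\frac{\zeta_{1}\zeta_{2}}{\zeta_{1}-\zeta_{2}}\Bigl[\frac{v(\zeta_{2})}{\zeta_{2}-z}-\frac{v(\zeta_{1})}{\zeta_{1}-z}\Bigr],
\]
and multiplication by $g^{-1}=\frac{(\zeta_{1}-z)(\zeta_{2}-z)}{\zeta_{1}\zeta_{2}}$ shows that $g^{-1}H_{g}v\in\mathrm{span}\{1,z\}\subset H_{+}$. Hence $A$ has rank at most two, and $\det(I+A)=\det(I+M)$, where $M$ is the $2\times 2$ matrix representing $A$ in the basis $\{1,z\}$ of its range. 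Using $S_{\boldsymbol{a}}T(\boldsymbol{a})^{-1}(1)=\varphi_{\boldsymbol{a}}^{(0)}$ and $S_{\boldsymbol{a}}T(\boldsymbol{a})^{-1}(z)=\varphi_{\boldsymbol{a}}^{(1)}$, the four entries of $M$ become explicit linear combinations of the values $\varphi_{\boldsymbol{a}}^{(0)}(\zeta_{j})$ and $\varphi_{\boldsymbol{a}}^{(1)}(\zeta_{j})$, $j=1,2$.

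The final step is a direct algebraic verification that $\det(I+M)=1+\mathrm{tr}(M)+\det(M)$ matches the stated expression. The hard part is this bookkeeping: in computing $\det(M)$ a factor $(\zeta_{1}-\zeta_{2})$ will cancel one power of $(\zeta_{1}-\zeta_{2})^{2}$ in the denominator, so that every term lands over $\zeta_{1}-\zeta_{2}$, and the resulting numerator then reproduces the expansion of $(\zeta_{1}+\varphi_{\boldsymbol{a}}^{(1)}(\zeta_{1}))(1+\varphi_{\boldsymbol{a}}^{(0)}(\zeta_{2}))-(1+\varphi_{\boldsymbol{a}}^{(0)}(\zeta_{1}))(\zeta_{2}+\varphi_{\boldsymbol{a}}^{(1)}(\zeta_{2}))$. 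The analytic content---rank-two structure of $H_{g}$ and the reproducing kernel---is transparent from (\ref{19}); only the $2\times 2$ simplification requires care. As a consistency check, sending $\zeta_{2}\to\infty$ in the formula recovers (i) since $q_{\zeta_{2}}\to 1$ and $\varphi_{\boldsymbol{a}}^{(j)}\in H_{-}$ vanishes at infinity.
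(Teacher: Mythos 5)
Your proposal is correct and follows essentially the same route as the paper: both use Lemma \ref{l7} to reduce $\tau_{\boldsymbol{a}}(g)$ to the determinant of $I$ plus a finite-rank operator, with $g^{-1}H_{g}$ computed explicitly from the reproducing identity for $H_-$, giving a rank-one operator for $q_{\zeta}$ and a rank-two operator for $q_{\zeta_1}q_{\zeta_2}$. The only difference is cosmetic — you represent the rank-two part in the basis $\{1,z\}$ of its range while the paper uses $\{q_{\zeta_1}^{-1},q_{\zeta_2}^{-1}\}$ (the same two-dimensional space) — and the $2\times 2$ determinant you outline does simplify to the stated expression, with the factor $\zeta_1-\zeta_2$ cancelling exactly as you predict.
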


\begin{proof}
Recall the identity
\[
g^{-1}T\left(  g\boldsymbol{a}\right)  T(\boldsymbol{a})^{-1}-I=g^{-1}%
H_{g}\mathfrak{p}_{-}\boldsymbol{a}T(\boldsymbol{a})^{-1}%
\]
of Lemma \ref{l7}. For $g=q_{\zeta}$ and $v\in H_{-}$ one has%
\begin{align*}
q_{\zeta}^{-1}\left(  H_{q_{\zeta}}v\right)  \left(  z\right)   &  =q_{\zeta
}(z)^{-1}\dfrac{1}{2\pi i}\int_{C}\dfrac{\zeta}{\left(  \lambda-z\right)
\left(  \zeta-\lambda\right)  }v\left(  \lambda\right)  d\lambda\\
&  =q_{\zeta}(z)^{-1}\dfrac{\zeta}{\zeta-z}v\left(  \zeta\right)  =v\left(
\zeta\right)  \text{,}%
\end{align*}
hence%
\[
q_{\zeta}^{-1}T\left(  q_{\zeta}\boldsymbol{a}\right)  T(\boldsymbol{a}%
)^{-1}u-u=q_{\zeta}^{-1}\left(  H_{q_{\zeta}}\mathfrak{p}_{-}\boldsymbol{a}%
T(\boldsymbol{a})^{-1}u\right)  =\left(  \mathfrak{p}_{-}\boldsymbol{a}%
T(\boldsymbol{a})^{-1}u\right)  \left(  \zeta\right)
\]
holds for any $u\in H_{+}$, which shows $q_{\zeta}^{-1}T\left(  q_{\zeta
}\boldsymbol{a}\right)  T(\boldsymbol{a})^{-1}-I$ is a rank $1$ operator with
image constant times $1$. Since $\mathfrak{p}_{-}\boldsymbol{a}%
T(\boldsymbol{a})^{-1}1=\varphi_{\boldsymbol{a}}^{\left(  0\right)  }$, one
has (i).

For $g=q_{\zeta_{1}}q_{\zeta_{2}}$ note%
\[
\left(  q_{\zeta_{1}}q_{\zeta_{2}}\right)  \left(  z\right)  =q_{\zeta_{2}%
}\left(  \zeta_{1}\right)  q_{\zeta_{1}}\left(  z\right)  +q_{\zeta_{1}%
}\left(  _{\zeta_{2}}\right)  q_{\zeta_{2}}\left(  z\right)  \text{.}%
\]
Then, one has%
\[
H_{q_{\zeta_{1}}q_{\zeta_{2}}}v=q_{\zeta_{2}}\left(  \zeta_{1}\right)
H_{q_{\zeta_{1}}}v+q_{\zeta_{1}}\left(  _{\zeta_{2}}\right)  H_{q_{\zeta_{2}}%
}v=q_{\zeta_{2}}\left(  \zeta_{1}\right)  v\left(  \zeta_{1}\right)
q_{\zeta_{1}}+q_{\zeta_{1}}\left(  \zeta_{2}\right)  v\left(  \zeta
_{2}\right)  q_{\zeta_{2}}\text{,}%
\]
and $\left(  q_{\zeta_{1}}q_{\zeta_{2}}\right)  ^{-1}T\left(  q_{\zeta_{1}%
}q_{\zeta_{2}}\boldsymbol{a}\right)  T(\boldsymbol{a})^{-1}-I$ is a rank 2
operator with image spanned by $\left\{  q_{\zeta_{1}}^{-1},q_{\zeta_{2}}%
^{-1}\right\}  $. Since%
\[
\left\{
\begin{array}
[c]{c}%
v_{1}\equiv\mathfrak{p}_{-}\boldsymbol{a}T(\boldsymbol{a})^{-1}q_{\zeta_{1}%
}^{-1}=\varphi_{\boldsymbol{a}}^{\left(  0\right)  }-\zeta_{1}^{-1}%
\varphi_{\boldsymbol{a}}^{\left(  1\right)  }\\
v_{2}\equiv\mathfrak{p}_{-}\boldsymbol{a}T(\boldsymbol{a})^{-1}q_{\zeta_{2}%
}^{-1}=\varphi_{\boldsymbol{a}}^{\left(  0\right)  }-\zeta_{2}^{-1}%
\varphi_{\boldsymbol{a}}^{\left(  1\right)  }%
\end{array}
\right.  \text{,}%
\]
setting $a_{j}=1+\varphi_{\boldsymbol{a}}^{\left(  0\right)  }\left(
\zeta_{j}\right)  $, $b_{j}=\zeta_{j}+\varphi_{\boldsymbol{a}}^{\left(
1\right)  }\left(  \zeta_{j}\right)  $ for $j=1$, $2$, one has%
\begin{align*}
\tau_{\boldsymbol{a}}\left(  q_{\zeta_{1}}q_{\zeta_{2}}\right)   &
=\det\left(
\begin{array}
[c]{cc}%
1+q_{\zeta_{1}}\left(  \zeta_{2}\right)  v_{1}\left(  \zeta_{2}\right)  &
q_{\zeta_{1}}\left(  \zeta_{2}\right)  v_{2}\left(  \zeta_{2}\right) \\
q_{\zeta_{2}}\left(  \zeta_{1}\right)  v_{1}\left(  \zeta_{1}\right)  &
1+q_{\zeta_{2}}\left(  \zeta_{1}\right)  v_{2}\left(  \zeta_{1}\right)
\end{array}
\right) \\
&  =\det\left(
\begin{array}
[c]{cc}%
\dfrac{\zeta_{1}a_{2}-b_{2}}{\zeta_{1}-\zeta_{2}} & \dfrac{\zeta_{1}\left(
a_{2}-\zeta_{2}^{-1}b_{2}\right)  }{\zeta_{1}-\zeta_{2}}\\
\dfrac{\zeta_{2}\left(  a_{1}-\zeta_{1}^{-1}b_{1}\right)  }{\zeta_{2}%
-\zeta_{1}} & \dfrac{\zeta_{2}a_{1}-b_{1}}{\zeta_{2}-\zeta_{1}}%
\end{array}
\right)  =\dfrac{a_{2}b_{1}-a_{1}b_{2}}{\zeta_{1}-\zeta_{2}}\text{,}%
\end{align*}
which is (ii).\bigskip
\end{proof}

Later we need the continuity of $\tau_{\boldsymbol{a}}\left(  g\right)  $ with
respect to $g\in\Gamma$ since we approximate $g$ by rational functions.

\begin{lemma}
\label{l10}Assume $g_{n}$, $g\in\Gamma$ are analytic and have no zeros on a
fixed neighborhood $U$ of $\overline{D}_{+}$. If $g_{n}$ converges to $g$
uniformly on $U$, then $\tau_{\boldsymbol{a}}\left(  g_{n}\right)  $ converges
to $\tau_{\boldsymbol{a}}\left(  g\right)  $ for $\boldsymbol{a}%
\in\boldsymbol{A}^{inv}\left(  C\right)  $.
\end{lemma}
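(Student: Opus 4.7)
The plan is to rewrite $\tau_{\boldsymbol{a}}(g)=\det(I+K_g)$ with
\[
K_g=g^{-1}H_g\,S_{\boldsymbol{a}}T(\boldsymbol{a})^{-1}
\]
using Lemma \ref{l7}, and to exploit continuity of the Fredholm determinant with respect to the trace norm (see \cite{si}). It then suffices to show $\Vert K_{g_n}-K_g\Vert_1\rightarrow 0$. Since $B:=S_{\boldsymbol{a}}T(\boldsymbol{a})^{-1}\colon H_+\rightarrow H_-$ is a fixed bounded operator, this reduces to trace-norm convergence
\[
\Vert g_n^{-1}H_{g_n}-g^{-1}H_g\Vert_1\longrightarrow 0
\]
as operators $H_-\rightarrow H_+$.

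I would use the telescoping decomposition
\[
g_n^{-1}H_{g_n}-g^{-1}H_g=g_n^{-1}(H_{g_n}-H_g)+(g_n^{-1}-g^{-1})H_g
\]
and note that because $g$ has no zeros on $\overline{D}_+\subset U$, we have $|g|\geq c>0$ there; hence for $n$ large also $|g_n|\geq c/2$ on $\overline{D}_+$, so the multiplication operators $g_n^{-1}$ are uniformly bounded on $H_+$, and $g_n^{-1}\rightarrow g^{-1}$ uniformly on $C$ (hence in operator norm). Combined with the trace-class estimates $\Vert AB\Vert_1\leq\Vert A\Vert_{op}\Vert B\Vert_1$, the problem is reduced to the main step: proving the trace-norm convergence $\Vert H_{g_n}-H_g\Vert_1\rightarrow 0$.

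For this I would reuse the factorization already employed in the proof of Lemma \ref{l7}: write $H_g=(I-\Delta)^{-1}M_g$ with $M_g:=(I-\Delta)H_g$, where $(I-\Delta)^{-1}$ is a fixed trace-class operator on $L^2(C)$. Then
\[
\Vert H_{g_n}-H_g\Vert_1\leq\Vert(I-\Delta)^{-1}\Vert_1\cdot\Vert M_{g_n}-M_g\Vert_{op},
\]
so everything reduces to operator-norm convergence of $M_{g_n}\rightarrow M_g$. The operator $H_g$ acts by the integral kernel $k_g(z,\lambda)=(g(\lambda)-g(z))/(\lambda-z)$, which is jointly smooth on $C\times C$ because $g$ is analytic on a neighborhood of $C$ (the apparent singularity on the diagonal is removable). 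Uniform convergence $g_n\rightarrow g$ on $U$ yields, by Cauchy estimates, uniform convergence on $C\times C$ of $k_{g_n}$ together with all the derivatives needed to apply $\Delta$. A smooth-kernel integral operator with uniformly converging kernel on the compact set $C\times C$ converges in Hilbert--Schmidt norm, and therefore in operator norm, giving $\Vert M_{g_n}-M_g\Vert_{op}\rightarrow 0$. Tracing back through the reductions and invoking continuity of the Fredholm determinant completes the argument.

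The main obstacle is the trace-norm convergence of $H_{g_n}\rightarrow H_g$; the rest is careful bookkeeping. The technical core is converting the uniform analytic convergence of $g_n$ into trace-norm convergence of the associated Hankel-type operators, which is precisely what the factorization through the trace-class resolvent $(I-\Delta)^{-1}$ is designed to do.
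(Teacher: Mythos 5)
Your proposal is correct and follows essentially the same route as the paper: both express $\tau_{\boldsymbol{a}}(g)=\det\bigl(I+g^{-1}H_{g}S_{\boldsymbol{a}}T(\boldsymbol{a})^{-1}\bigr)$, invoke the Lipschitz-type continuity of the Fredholm determinant in trace norm, telescope the difference $g_n^{-1}H_{g_n}-g^{-1}H_g$, and control $\Vert H_{g_n}-H_g\Vert_1$ via the factorization through the trace-class operator $(I-\Delta)^{-1}$ together with the smooth-kernel representation $(g(\lambda)-g(z))/(\lambda-z)$. The only difference is that you spell out the uniform lower bound on $|g_n|$ and the Hilbert--Schmidt argument for the kernel convergence, details the paper leaves implicit.
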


\begin{proof}
Suppose $g_{1}$, $g_{2}\in\Gamma$ are analytic and have no zeros on $U$. Then
trace class operators $A_{j}=g_{j}^{-1}H_{g_{j}}S_{\boldsymbol{a}}T\left(
\boldsymbol{a}\right)  ^{-1}$ ($j=1$, $2$) satisfy (see \cite{si})
\[
\left\vert \det\left(  I+A_{1}\right)  -\det\left(  I+A_{2}\right)
\right\vert \leq\left\Vert A_{1}-A_{2}\right\Vert _{1}\exp\left(  1+\left\Vert
A_{1}\right\Vert _{1}+\left\Vert A_{2}\right\Vert _{1}\right)
\]
with trace norm $\left\Vert \cdot\right\Vert _{1}$. Observe%
\begin{align*}
\left\Vert A_{1}-A_{2}\right\Vert _{1}  &  \leq\left\Vert g_{1}^{-1}H_{g_{1}%
}-g_{2}^{-1}H_{g_{2}}\right\Vert _{1}\left\Vert S_{\boldsymbol{a}}T\left(
\boldsymbol{a}\right)  ^{-1}\right\Vert \\
&  \leq c_{1}\left(  \left\Vert g_{1}^{-1}-g_{2}^{-1}\right\Vert \left\Vert
H_{g_{1}}\right\Vert _{1}+\left\Vert g_{2}^{-1}\right\Vert \left\Vert
H_{g_{1}}-H_{g_{2}}\right\Vert _{1}\right)
\end{align*}
hold with operator norm $\left\Vert \cdot\right\Vert $ and $c_{1}=\left\Vert
S_{\boldsymbol{a}}T\left(  \boldsymbol{a}\right)  ^{-1}\right\Vert $. Since we
have estimates%
\[
\left\{
\begin{array}
[c]{l}%
\left\Vert g_{1}^{-1}-g_{2}^{-1}\right\Vert \leq\left\Vert g_{1}^{-1}%
g_{2}^{-1}\right\Vert _{U}\left\Vert g_{1}-g_{2}\right\Vert _{U}\\
\left\Vert H_{g_{1}}-H_{g_{2}}\right\Vert _{1}\leq\left\Vert \left(
I-\Delta\right)  ^{-1}\right\Vert _{1}\left\Vert \left(  I-\Delta\right)
\left(  H_{g_{1}}-H_{g_{2}}\right)  \right\Vert \leq c_{2}\left\Vert
g_{1}-g_{2}\right\Vert _{U}\\
\left\Vert H_{g_{1}}\right\Vert _{1}\leq\left\Vert \left(  I-\Delta\right)
^{-1}\right\Vert _{1}\left\Vert \left(  I-\Delta\right)  H_{g_{1}}\right\Vert
\leq c_{2}\left\Vert g_{1}\right\Vert _{U}%
\end{array}
\right.
\]
with $\left\Vert g\right\Vert _{U}=\sup_{z\in U}\left\vert g\left(  z\right)
\right\vert $ and some constant $c_{2}$ depending only on $U$, the convergence
of $\tau_{\boldsymbol{a}}\left(  g_{n}\right)  \rightarrow\tau_{\boldsymbol{a}%
}\left(  g\right)  $ is clear due to $\tau_{\boldsymbol{a}}\left(
g_{n}\right)  =\det\left(  I+g_{n}^{-1}H_{g_{n}}S_{\boldsymbol{a}}T\left(
\boldsymbol{a}\right)  ^{-1}\right)  $.
\end{proof}

\section{Derivation of Toda hierarchy}

In this section we show that the symbols $\boldsymbol{A}^{inv}\left(
C\right)  $ and the group $\Gamma$ generate Jacobi operators and Toda lattice
under some conditions on symbols.

In view of Lemma \ref{l4}, assuming $z^{n}\boldsymbol{a}\in\boldsymbol{A}%
^{inv}\left(  C\right)  $ for $n\in\mathbb{Z}$, we define%
\begin{equation}
a_{n}=\dfrac{\sqrt{1+\varphi_{z^{n-1}\boldsymbol{a}}^{\left(  0\right)  }(0)}%
}{\sqrt{1+\varphi_{z^{n}\boldsymbol{a}}^{\left(  0\right)  }\left(  0\right)
}}\text{, \ }b_{n}=\varphi_{\widetilde{z^{n}\boldsymbol{a}}}^{\left(
-1\right)  }\left(  0\right)  -\varphi_{\widetilde{z^{n-1}\boldsymbol{a}}%
}^{\left(  -1\right)  }\left(  0\right)  \text{, \ }g_{n}=\dfrac
{\boldsymbol{a}T\left(  z^{n}\boldsymbol{a}\right)  ^{-1}1}{\sqrt
{1+\varphi_{z^{n}\boldsymbol{a}}^{\left(  0\right)  }(0)}}\text{,} \label{20}%
\end{equation}
where $\sqrt{\cdot}$ is taken arbitrary.

\begin{lemma}
\label{l11}The coefficients in (\ref{20}) can be expressed by tau functions as%
\begin{equation}
a_{n}^{2}==\dfrac{\tau_{\boldsymbol{a}}\left(  z^{n-2}\right)  \tau
_{\boldsymbol{a}}\left(  z^{n}\right)  }{\tau_{\boldsymbol{a}}\left(
z^{n-1}\right)  ^{2}}\text{, \ }b_{n}=\left.  \partial_{\varepsilon}\log
\dfrac{\tau_{\boldsymbol{a}}\left(  z^{n}q_{\varepsilon^{-1}}\right)  }%
{\tau_{\boldsymbol{a}}\left(  z^{n-1}q_{\varepsilon^{-1}}\right)  }\right\vert
_{\varepsilon=0}\text{,} \label{21}%
\end{equation}
and $g_{n}$ satisfies%
\begin{equation}
a_{n+1}g_{n+1}+a_{n}g_{n-1}+b_{n}g_{n}=\left(  z+z^{-1}\right)  g_{n}\text{.}
\label{22}%
\end{equation}

\end{lemma}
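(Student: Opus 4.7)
The plan is to prove the three claims of Lemma~\ref{l11} separately, starting from the most straightforward.

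The recurrence (22) for $\{g_n\}$ is a direct algebraic consequence of Lemma~\ref{l4}. Abbreviate $\alpha_k := 1 + \varphi_{z^k\boldsymbol{a}}^{(0)}(0)$, so that (20) reads $f_n = \sqrt{\alpha_n}\, g_n$ and $a_n = \sqrt{\alpha_{n-1}/\alpha_n}$, while $b_n$ is already the middle coefficient appearing in Lemma~\ref{l4}. Substituting $f_{n\pm1} = \sqrt{\alpha_{n\pm1}}\, g_{n\pm1}$ into that recurrence and dividing through by $\sqrt{\alpha_n}$ produces the coefficients $\sqrt{\alpha_n/\alpha_{n+1}} = a_{n+1}$ and $\sqrt{\alpha_{n-1}/\alpha_n} = a_n$ in front of $g_{n+1}$ and $g_{n-1}$ respectively, which is exactly (22).

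The tau-function formula for $b_n$ uses the cocycle property together with Lemma~\ref{l9}(i). Since $z^n\boldsymbol{a}\in\boldsymbol{A}^{inv}(C)$, Lemma~\ref{l8}(i) guarantees $\tau_{\boldsymbol{a}}(z^n)\neq 0$, so Lemma~\ref{l8}(iii) splits $\tau_{\boldsymbol{a}}(z^n q_{\varepsilon^{-1}}) = \tau_{\boldsymbol{a}}(z^n)\,\tau_{z^n\boldsymbol{a}}(q_{\varepsilon^{-1}})$; the first factor is $\varepsilon$-independent and drops out under $\partial_{\varepsilon}\log$. Lemma~\ref{l9}(i) identifies the remaining factor as $1 + \varphi_{z^n\boldsymbol{a}}^{(0)}(\varepsilon^{-1})$. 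As $\varepsilon\to 0$ the argument $\varepsilon^{-1}$ runs off to $\infty$ in $D_-^1$, and since the element $\varphi_{z^n\boldsymbol{a}}^{(0)}$ of $H_-$ has an expansion $\sum_{k\geq 1} c_k w^{-k}$ at infinity, the $\varepsilon$-derivative at $0$ equals $c_1 = \lim_{w\to\infty} w\,\varphi_{z^n\boldsymbol{a}}^{(0)}(w)$, which by Lemma~\ref{l2}(i) equals $\varphi_{\widetilde{z^n\boldsymbol{a}}}^{(-1)}(0)$. Subtracting the analogous $n-1$ term reproduces the $b_n$ of (20).

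The $a_n^2$ formula rests on the auxiliary identity $\tau_{\boldsymbol{b}}(z) = 1 + \varphi_{\widetilde{\boldsymbol{b}}}^{(0)}(0)$ for every $\boldsymbol{b}\in\boldsymbol{A}^{inv}(C)$. I expect this computation to be the main obstacle, because $z$ is not of the form $q_\zeta$ or $q_{\zeta_1}q_{\zeta_2}$ and Lemma~\ref{l9} does not apply directly; instead one must return to the Fredholm determinant via Lemma~\ref{l7}. With $g = z$, Lemma~\ref{l7} gives $z^{-1}T(z\boldsymbol{b})T(\boldsymbol{b})^{-1} - I = z^{-1}H_z S_{\boldsymbol{b}}T(\boldsymbol{b})^{-1}$, and for $v\in H_-$ formula (19) simplifies to $H_z v = \frac{1}{2\pi i}\int_C v\, d\lambda$, which evaluates to the constant $\lim_{w\to\infty} w\,v(w)$ (the $C_1$ integral is the residue at $\infty$, while the $C_2$ integral vanishes because $v$ is analytic throughout $D_-^2$). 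Hence the operator is rank one with image $\mathbb{C}\cdot z^{-1}$; its trace, obtained by applying it to $z^{-1}$ and converting $\lim_{w\to\infty} w\,\varphi_{\boldsymbol{b}}^{(-1)}(w)$ into $\varphi_{\widetilde{\boldsymbol{b}}}^{(0)}(0)$ via Lemma~\ref{l2}(i), yields the identity. The tilde symmetry (Lemma~\ref{l8}(ii)) and the cocycle relation $\tau_{z^{-n}\widetilde{\boldsymbol{a}}}(z)\tau_{z^{-(n-1)}\widetilde{\boldsymbol{a}}}(z^{-1}) = \tau_{z^{-n}\widetilde{\boldsymbol{a}}}(1) = 1$ then combine (using $\widetilde{z^{-n}\widetilde{\boldsymbol{a}}} = z^n\boldsymbol{a}$) to give $\tau_{z^{n-1}\boldsymbol{a}}(z) = 1/\alpha_n$. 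One more cocycle yields $\tau_{\boldsymbol{a}}(z^n)/\tau_{\boldsymbol{a}}(z^{n-1}) = 1/\alpha_n$, so $\tau_{\boldsymbol{a}}(z^{n-2})\tau_{\boldsymbol{a}}(z^n)/\tau_{\boldsymbol{a}}(z^{n-1})^2 = \alpha_{n-1}/\alpha_n = a_n^2$.
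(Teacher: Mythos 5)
Your proposal is correct, and for two of the three claims it follows the paper's own route exactly: the recurrence (\ref{22}) by substituting $f_n=\sqrt{\alpha_n}\,g_n$ into Lemma \ref{l4}, and the $b_n$ formula by combining the cocycle property, Lemma \ref{l9}(i), the expansion of $\varphi^{(0)}_{z^n\boldsymbol{a}}$ at infinity, and Lemma \ref{l2}(i) to identify the derivative with $\varphi^{(-1)}_{\widetilde{z^n\boldsymbol{a}}}(0)$.

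The one place where you diverge is the key identity behind the $a_n^2$ formula. The paper simply applies Lemma \ref{l9}(i) at $\zeta=0$: since $q_\zeta(z)=\zeta/(\zeta-z)$ agrees with $z^{-1}$ up to a nonzero constant as $\zeta\to 0$ (and constants do not change $\tau$), one gets $\tau_{z^k\boldsymbol{a}}(z^{-1})=1+\varphi^{(0)}_{z^k\boldsymbol{a}}(0)=\alpha_k$ directly, and then two applications of the cocycle property finish the computation. You judged that Lemma \ref{l9} was inapplicable because $z$ is not of the form $q_\zeta$, and instead re-derived the dual identity $\tau_{\boldsymbol{b}}(z)=1+\varphi^{(0)}_{\widetilde{\boldsymbol{b}}}(0)$ from scratch via the Fredholm determinant: the rank-one computation $H_z v=\lim_{w\to\infty}w\,v(w)$ (the $C_2$ contribution vanishing, the $C_1$ contribution being the residue at infinity) is correct, as is the subsequent bookkeeping with $\tau_{\boldsymbol{c}}(z)\,\tau_{z\boldsymbol{c}}(z^{-1})=\tau_{\boldsymbol{c}}(1)=1$, Lemma \ref{l8}(ii), and $\widetilde{z^{-n}\widetilde{\boldsymbol{a}}}=z^n\boldsymbol{a}$. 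So your argument is sound and self-contained, at the cost of being longer; what the paper's shortcut buys is the observation that $z^{-1}$ \emph{is} reachable as a limit of the $q_\zeta$ (the paper makes this explicit later, noting $\tau_{\boldsymbol{a}}(z^{-1})=\tau_{\boldsymbol{a}}(q_0)$), while your detour buys an explicit verification of that limiting case without appealing to continuity in $\zeta$.
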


\begin{proof}
Equation (\ref{22}) follows from Lemma \ref{l4} without difficulty. (i) of
Lemma \ref{l9} implies (setting $\zeta=0$)%
\[
a_{n}^{2}=\dfrac{1+\varphi_{z^{n-1}\boldsymbol{a}}^{\left(  0\right)  }%
(0)}{1+\varphi_{z^{n}\boldsymbol{a}}^{\left(  0\right)  }\left(  0\right)
}=\dfrac{\tau_{z^{n-1}\boldsymbol{a}}\left(  z^{-1}\right)  }{\tau
_{z^{n}\boldsymbol{a}}\left(  z^{-1}\right)  }=\dfrac{\tau_{\boldsymbol{a}%
}\left(  z^{n-2}\right)  \tau_{\boldsymbol{a}}\left(  z^{n}\right)  }%
{\tau_{\boldsymbol{a}}\left(  z^{n-1}\right)  ^{2}}\text{.}%
\]
In the last equality (iii) of Lemma \ref{l8} was used.

(i) of Lemma \ref{l2} shows%
\begin{align*}
\varphi_{\widetilde{z^{n}\boldsymbol{a}}}^{\left(  -1\right)  }\left(
0\right)   &  =\lim_{\varepsilon\rightarrow0}\varphi_{\widetilde
{z^{n}\boldsymbol{a}}}^{\left(  -1\right)  }\left(  \varepsilon\right)
=\lim_{\varepsilon\rightarrow0}\varepsilon^{-1}\varphi_{z^{n}\boldsymbol{a}%
}^{\left(  0\right)  }\left(  \varepsilon^{-1}\right)  =\lim_{\varepsilon
\rightarrow0}\dfrac{\tau_{z^{n}\boldsymbol{a}}\left(  q_{\varepsilon^{-1}%
}\right)  -1}{\varepsilon}\\
&  =\lim_{\varepsilon\rightarrow0}\dfrac{\tau_{\boldsymbol{a}}\left(
z^{n}q_{\varepsilon^{-1}}\right)  -\tau_{\boldsymbol{a}}\left(  z^{n}\right)
}{\varepsilon\tau_{\boldsymbol{a}}\left(  z^{n}\right)  }=\left.
\partial_{\varepsilon}\log\tau_{\boldsymbol{a}}\left(  z^{n}q_{\varepsilon
^{-1}}\right)  \right\vert _{\varepsilon=0}\text{,}%
\end{align*}
which completes the proof.\bigskip
\end{proof}

The last derivative can be replaced by $\left.  \partial_{\varepsilon}\log
\tau_{\boldsymbol{a}}\left(  z^{n}e^{\varepsilon z}\right)  \right\vert
_{\varepsilon=0}$, since $q_{\varepsilon^{-1}}\left(  z\right)
-e^{\varepsilon z}=O\left(  \varepsilon^{2}\right)  $. In this way one can
derive a Jacobi equation (\ref{22}) from $z^{n}\in\Gamma$ and $\boldsymbol{a}%
\in\boldsymbol{A}^{inv}\left(  C\right)  $ under the condition $z^{n}%
\boldsymbol{a}\in\boldsymbol{A}^{inv}\left(  C\right)  $ for any
$n\in\mathbb{Z}$. Usually Jacobi equations are considered for real
coefficients $a_{n}$, $b_{n}$, and later we restrict ourselves only in the
real case.

For a polynomial $p$ let $\widehat{p}$ be the polynomial part of $p\left(
z+z^{-1}\right)  $. Assume $z^{n}e^{t\widehat{p}}\boldsymbol{a}\in
\boldsymbol{A}^{inv}\left(  C\right)  $ for any $n\in\mathbb{Z}$ and
$t\in\mathbb{R}$. Define a second order difference operator $L_{\boldsymbol{a}%
}$ by%
\[
\left(  L_{\boldsymbol{a}}v\right)  _{n}=a_{n+1}^{2}v_{n+1}+b_{n}v_{n}+v_{n-1}%
\]
with $a_{n}$, $b_{n}$ in (\ref{21}), and set%
\[
f_{n}\left(  t,z\right)  =\boldsymbol{a}T\left(  z^{n}e^{t\widehat{p}%
}\boldsymbol{a}\right)  ^{-1}1\text{.}%
\]
Then, Lemma \ref{l4} implies%
\begin{equation}
\left(  L_{e^{t\widehat{p}}\boldsymbol{a}}f\right)  =\left(  z+z^{-1}\right)
f\text{, \ hence \ }p\left(  L_{e^{t\widehat{p}}\boldsymbol{a}}\right)
f=p\left(  z+z^{-1}\right)  f\text{.} \label{23}%
\end{equation}
We decompose $p\left(  L_{\boldsymbol{a}}\right)  $ into 2 parts $p\left(
L_{\boldsymbol{a}}\right)  _{\pm}$: Define coefficients $\alpha_{in}$ by%
\[
\left(  p\left(  L_{\boldsymbol{a}}\right)  v\right)  _{n}=\sum_{-k\leq i\leq
k}\alpha_{in}v_{i+n}\text{ \ \ with }k=\text{degree of }p\text{,}%
\]
and set%
\[
\left(  p\left(  L_{\boldsymbol{a}}\right)  _{+}v\right)  _{n}=\sum_{1\leq
i\leq k}\alpha_{in}v_{i+n}\text{, \ \ }\left(  p\left(  L_{\boldsymbol{a}%
}\right)  _{-}v\right)  _{n}=\sum_{-k\leq i\leq0}\alpha_{in}v_{i+n}\text{.}%
\]

\begin{lemma}
\label{l12}$L_{e^{t\widehat{p}}\boldsymbol{a}}$ satisfies an operator equation%
\begin{equation}
\partial_{t}L_{e^{t\widehat{p}}\boldsymbol{a}}=\left[  L_{e^{t\widehat{p}%
}\boldsymbol{a}},p\left(  L_{e^{t\widehat{p}}\boldsymbol{a}}\right)
_{-}\right]  \text{.} \label{24}%
\end{equation}

\end{lemma}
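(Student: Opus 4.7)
The plan is to prove (\ref{24}) by first establishing the pointwise identity
\[
\partial_{t} f_{n}(t,z) \;=\; -\bigl(p(L_{e^{t\widehat{p}}\boldsymbol{a}})_{-}\, f\bigr)_{n}(t,z) \qquad \text{for all } n \in \mathbb{Z},\ z \in D_{-}, \quad (\ast)
\]
and then promoting it to the operator identity. Writing $L := L_{e^{t\widehat{p}}\boldsymbol{a}}$, differentiation of (\ref{23}) in $t$ combined with $(\ast)$ gives
\[
(\partial_{t} L)\, f_{n} \;=\; (z+z^{-1})\,\partial_{t} f_{n} - L\,\partial_{t} f_{n} \;=\; L(p(L)_{-} f)_{n} - (z+z^{-1})(p(L)_{-} f)_{n} \;=\; [L, p(L)_{-}]\, f_{n},
\]
where the scalar $(z+z^{-1})$ commutes with $p(L)_{-}$ and $(z+z^{-1})(p(L)_{-} f) = p(L)_{-}(L f)$. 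Since $\partial_{t} L$ and $[L, p(L)_{-}]$ are difference operators of bandwidth $\leq k+1$ in $n$, the vanishing of their difference applied to $f_{n}$ at every $n$ is a finite linear relation in $f_{n-k-1}, \ldots, f_{n+k+1}$ as functions of $z$; the distinct leading asymptotics $f_{m}\sim z^{-m} e^{-t\widehat{p}(z)}$ at $z = \infty$ on $D_{-}^{1}$ force every matrix entry of $\partial_{t} L - [L, p(L)_{-}]$ to vanish, yielding (\ref{24}).

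For the pointwise identity $(\ast)$, set $u_{n} := T(z^{n} e^{t\widehat{p}}\boldsymbol{a})^{-1} 1$, so $f_{n} = \boldsymbol{a} u_{n}$; direct differentiation and Lemma \ref{l7} give
\[
\partial_{t} f_{n} \;=\; -\boldsymbol{a}\, T(z^{n} e^{t\widehat{p}}\boldsymbol{a})^{-1}\bigl(\widehat{p} + \mathfrak{p}_{+}(\widehat{p}\,\psi_{n})\bigr),
\]
where $\psi_{n} := \varphi_{z^{n} e^{t\widehat{p}}\boldsymbol{a}}^{(0)} \in H_{-}$ and $\Psi_{n} := 1 + \psi_{n} = z^{n} e^{t\widehat{p}} f_{n}$ is the Baker--Akhiezer function. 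Hence $(\ast)$ reduces, after applying $\boldsymbol{a}\, T(z^{n} e^{t\widehat{p}}\boldsymbol{a})^{-1}$ to both sides, to the identity
\[
\mathfrak{p}_{+}(\widehat{p}\,\Psi_{n}) \;=\; \mathfrak{p}_{+}\sum_{-k \leq i \leq 0} \alpha_{i,n}\, z^{-i}\, \Psi_{n+i}
\]
with $\alpha_{i,n}$ the matrix entries of $p(L)_{-}$; equivalently, $\widehat{p}(z)\Psi_{n} - \sum_{i \leq 0}\alpha_{i,n} z^{-i}\Psi_{n+i}$ lies in $H_{-}$. This is established by iteratively applying the $\Psi$-form of the Jacobi recurrence, derived from (\ref{23}) by multiplication with $z^{n}e^{t\widehat{p}}$,
\[
a_{n+1}^{2}\, z^{-1}\,\Psi_{n+1} \;=\; (z + z^{-1} - b_{n})\Psi_{n} - z\Psi_{n-1},
\]
to rewrite $\widehat{p}(z)\Psi_{n}$ modulo $H_{-}$ as a combination of $z^{-i}\Psi_{n+i}$, $-k \leq i \leq 0$. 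Because $\widehat{p}$ is by construction the polynomial part of $p(z+z^{-1})$, the reduction terminates in precisely the combination prescribed by $p(L)_{-}$ (as confirmed for $p(\lambda)=\lambda$ by the identity $c_{n}-c_{n-1}=b_{n}$ coming from (\ref{21}), with $c_{n} := \varphi_{\widetilde{z^{n}e^{t\widehat{p}}\boldsymbol{a}}}^{(-1)}(0)$).

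The hard part will be the combinatorial verification in the second paragraph: checking that the iterated application of the $\Psi$-recurrence to $\widehat{p}(z)\Psi_{n}$ produces the exact combination $\sum_{i\leq 0}\alpha_{i,n} z^{-i}\Psi_{n+i}$ modulo $H_{-}$. While conceptually transparent---it encodes precisely the classical fact that the polynomial part of $p(z+z^{-1})$ generates, through the Toeplitz construction, the lower-triangular part $p(L)_{-}$ of the corresponding operator polynomial---executing the induction cleanly demands careful bookkeeping of which terms land in $H_{+}$ versus $H_{-}$ at each step, using repeatedly the tau-function identifications (\ref{20})--(\ref{21}) for $a_{n}$ and $b_{n}$.
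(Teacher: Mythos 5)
Your outer skeleton coincides with the paper's: you reduce everything to the pointwise identity $\partial_{t}f_{n}=-p(L_{e^{t\widehat{p}}\boldsymbol{a}})_{-}f_{n}$, differentiate $Lf=(z+z^{-1})f$ to get $(\partial_{t}L)f_{n}=[L,p(L)_{-}]f_{n}$, and then kill the matrix entries of the difference operator by linear independence of $\{f_{m}\}$. Your last step, via the asymptotics $z^{m}e^{t\widehat{p}}f_{m}\to 1$ at infinity in $D_{-}^{1}$, is a legitimate substitute for the paper's Lemma \ref{l13} (which proves independence by a triangular change of basis to $T(\boldsymbol{a})^{-1}z^{k}$). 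Your reduction of $(\ast)$ to the projection identity $\mathfrak{p}_{+}(\widehat{p}\,\Psi_{n})=\mathfrak{p}_{+}\bigl(\sum_{-k\leq i\leq 0}\alpha_{in}z^{-i}\Psi_{n+i}\bigr)$ is also correct and is exactly the identity the paper proves.

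The genuine gap is that you do not prove this identity: you propose to verify it by iterating the three-term recurrence for $\Psi_{n}$ and tracking terms ``modulo $H_{-}$,'' and you explicitly defer this as ``the hard part.'' That deferral hides the real difficulty. Membership in $H_{-}$ cannot be read off from degrees at infinity, because $H_{-}$ has two components: writing $\Psi_{n}=1+\psi_{n}$ with $\psi_{n}\in H_{-}$, the terms $z^{-j}\psi_{n+i}$ ($j\geq 1$) that your reduction produces are \emph{not} in $H_{-}$ in general, since $z^{-j}$ has a pole at $0\in D_{-}^{2}$; their $\mathfrak{p}_{+}$-projections are nonzero polynomials in $z^{-1}$, and one must show these cancel against the explicit $z^{-1}$-polynomial $p(z+z^{-1})-\widehat{p}$. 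The paper disposes of this in one stroke with an idea your plan lacks: split $H_{+}=H_{+}^{1}\oplus H_{+}^{2}$ (nonnegative versus negative powers of $z$) and observe via the contour integral that $\mathfrak{p}_{+}(z^{k}f)\in H_{+}^{1}$ for $k\geq 0$, $f\in H_{-}$ (the $C_{2}$ contribution vanishes), with the mirror statement for negative powers. Then $\mathfrak{p}_{+}(\widehat{p}\,\Psi_{n})$ and $\mathfrak{p}_{+}(z^{n}e^{t\widehat{p}}p(L)_{-}f_{n})$ are both the $H_{+}^{1}$-component of the same element $\mathfrak{p}_{+}(p(z+z^{-1})\Psi_{n})$, and the identity follows from uniqueness of the direct-sum decomposition, with no recurrence combinatorics at all. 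Without this (or an equivalent device controlling the behavior at $0$), your inductive bookkeeping is exactly where the argument would stall, so as written the proof is incomplete at its central step.
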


\begin{proof}
Since $H_{+}$ is generated by $\left\{  z^{n}\right\}  _{n\in\mathbb{Z}}$, one
has $H_{+}=H_{+}^{1}\oplus H_{+}^{2}$ (direct sum) with%
\[
H_{+}^{1}=\text{linear span of }\left\{  z^{n}\right\}  _{n\geq0}\text{,
\ }H_{+}^{2}=\text{linear span of }\left\{  z^{n}\right\}  _{n<0}\text{.}%
\]
Observe that for any integer $k\geq0$, $f\in H_{-}$ and $z\in D_{+}$%
\[
\left(  \mathfrak{p}_{+}\left(  z^{k}f\right)  \right)  \left(  z\right)
=\dfrac{1}{2\pi i}\int_{C}\dfrac{\lambda^{k}f\left(  \lambda\right)  }%
{\lambda-z}d\lambda=\dfrac{1}{2\pi i}\int_{C_{1}}\dfrac{\lambda^{k}f\left(
\lambda\right)  }{\lambda-z}d\lambda\in H_{+}^{1}\text{,}%
\]
holds, since $\lambda^{k}f\left(  \lambda\right)  \left(  \lambda-z\right)
^{-1}$ is analytic in $D_{-}^{2}$ (inside of $C_{2}$). Then, we see
$\mathfrak{p}_{+}\left(  z^{k}f\right)  \in H_{+}^{1}$. Since%
\[
z^{n}e^{t\widehat{p}}\boldsymbol{a}f_{n}\left(  t,z\right)  =1+\varphi
_{z^{n}e^{t\widehat{p}}\boldsymbol{a}}^{\left(  0\right)  }\left(  z\right)
\text{ \ with }\varphi_{z^{n}e^{t\widehat{p}}\boldsymbol{a}}^{\left(
0\right)  }\in H_{-}\text{,}%
\]
one has%
\[
\mathfrak{p}_{+}\left(  \widehat{p}z^{n}e^{t\widehat{p}}f_{n}(t,z)\right)  \in
H_{+}^{1}\text{.}%
\]
Similarly $\mathfrak{p}_{+}\left(  \left(  p\left(  z+z^{-1}\right)
-\widehat{p}\right)  z^{n}e^{t\widehat{p}}f_{n}(t,z)\right)  \in H_{+}^{1}$
holds. On the other hand, if $k\leq n$, then%
\[
z^{n}e^{t\widehat{p}}f_{k}\left(  t,z\right)  =z^{n-k}z^{k}e^{t\widehat{p}%
}f_{k}\left(  t,z\right)  =z^{n-k}\left(  1+\varphi_{z^{k}e^{t\widehat{p}%
}\boldsymbol{a}}^{\left(  0\right)  }\left(  z\right)  \right)  \text{,}%
\]
hence%
\[
\mathfrak{p}_{+}z^{n}e^{t\widehat{p}}p\left(  L_{e^{t\widehat{p}%
}\boldsymbol{a}}\right)  _{-}f_{n}\left(  t,z\right)  \in H_{+}^{1}\text{,}%
\]
and similarly\ $\mathfrak{p}_{+}z^{n}e^{t\widehat{p}}p\left(  L_{e^{t\widehat
{p}}\boldsymbol{a}}\right)  _{+}f_{n}\left(  t,z\right)  \in H_{+}^{2}$ hold.
Since%
\[
\left(  p\left(  L_{e^{t\widehat{p}}\boldsymbol{a}}\right)  f\right)
_{n}=\widehat{p}f_{n}+\left(  p\left(  z+z^{-1}\right)  -\widehat{p}\right)
f=p\left(  L_{e^{t\widehat{p}}\boldsymbol{a}}\right)  _{-}f_{n}+p\left(
L_{e^{t\widehat{p}}\boldsymbol{a}}\right)  _{+}f_{n}\text{,}%
\]
one sees%
\[
\mathfrak{p}_{+}z^{n}\widehat{p}e^{t\widehat{p}}f_{n}=\mathfrak{p}_{+}%
z^{n}e^{t\widehat{p}}p\left(  L_{e^{ts_{k}}\boldsymbol{a}}\right)  _{-}%
f_{n}\text{.}%
\]
Noting $\mathfrak{p}_{+}z^{n}e^{t\widehat{p}}f_{n}\left(  t,z\right)  =1$
implies%
\[
0=\partial_{t}\mathfrak{p}_{+}z^{n}e^{t\widehat{p}}f_{n}\left(  t,z\right)
=\mathfrak{p}_{+}z^{n}\widehat{p}e^{t\widehat{p}}f_{n}\left(  t,z\right)
+\mathfrak{p}_{+}z^{n}e^{t\widehat{p}}\partial_{t}f_{n}\left(  t,z\right)
\text{,}%
\]
one has%
\[
\mathfrak{p}_{+}z^{n}e^{t\widehat{p}}\left(  \partial_{t}f_{n}\left(
t,z\right)  +p\left(  L_{e^{ts_{k}}\boldsymbol{a}}\right)  _{-}f_{n}\left(
t,z\right)  \right)  =0\text{.}%
\]
Since $z^{n}e^{t\widehat{p}}\left(  \partial_{t}f_{n}\left(  t,z\right)
+p\left(  L_{e^{ts_{k}}\boldsymbol{a}}\right)  _{-}f_{n}\left(  t,z\right)
\right)  \in W_{z^{n}e^{t\widehat{p}}\boldsymbol{a}}$ and $\mathfrak{p}%
_{+}:W_{z^{n}e^{t\widehat{p}}\boldsymbol{a}}\rightarrow H_{+}$ is one to one,
this implies%
\[
\partial_{t}f_{n}\left(  t,z\right)  =-p\left(  L_{e^{ts_{k}}\boldsymbol{a}%
}\right)  _{-}f_{n}\left(  t,z\right)  \text{.}%
\]
Since%
\[
\partial_{t}\left(  L_{e^{t\widehat{p}}\boldsymbol{a}}f\right)  =\left(
\partial_{t}L_{e^{t\widehat{p}}\boldsymbol{a}}\right)  f+L_{e^{t\widehat{p}%
}\boldsymbol{a}}\partial_{t}f
\]
and $\left(  L_{e^{t\widehat{p}}\boldsymbol{a}}f\right)  =\left(
z+z^{-1}\right)  f$ hold, one has%
\begin{align*}
\left(  \partial_{t}L_{e^{t\widehat{p}}\boldsymbol{a}}\right)  f  &  =\left(
z+z^{-1}\right)  \partial_{t}f-L_{e^{t\widehat{p}}\boldsymbol{a}}\partial
_{t}f\\
&  =-p\left(  L_{e^{t\widehat{p}}\boldsymbol{a}}\right)  _{-}\left(
z+z^{-1}\right)  f+L_{e^{t\widehat{p}}\boldsymbol{a}}p\left(  L_{e^{t\widehat
{p}}\boldsymbol{a}}\right)  _{-}f\\
&  =-p\left(  L_{e^{t\widehat{p}}\boldsymbol{a}}\right)  _{-}L_{e^{t\widehat
{p}}\boldsymbol{a}}f+L_{e^{t\widehat{p}}\boldsymbol{a}}p\left(
L_{e^{t\widehat{p}}\boldsymbol{a}}\right)  _{-}f=\left[  L_{e^{t\widehat{p}%
}\boldsymbol{a}},p\left(  L_{e^{t\widehat{p}}\boldsymbol{a}}\right)
_{-}\right]  f\text{.}%
\end{align*}
If we denote the coefficient of the deference operator $\partial
_{t}L_{e^{t\widehat{p}}\boldsymbol{a}}-\left[  L_{e^{t\widehat{p}%
}\boldsymbol{a}},p\left(  L_{e^{t\widehat{p}}\boldsymbol{a}}\right)
_{-}\right]  $ by $\alpha_{jk}$, then we have%
\[
\sum\nolimits_{k}\alpha_{jk}f_{k}\left(  t,z\right)  =0\text{ \ \ in
\ }W_{\boldsymbol{a}}\text{ \ for each fixed }t\text{,}%
\]
where the summation is finite for each $j$. Since Lemma \ref{l13} below
implies that $\left\{  f_{k}\left(  t,\cdot\right)  \right\}  _{k}$ is
linearly independent in $W_{\boldsymbol{a}}$, one has $\alpha_{jk}=0$, which
yields (\ref{24}).\medskip
\end{proof}

\begin{lemma}
\label{l13}For a symbol $\boldsymbol{a}$\ assume $z^{n}\boldsymbol{a}%
\in\boldsymbol{A}^{inv}\left(  C\right)  $ for any $n\in\mathbb{Z}$ and set%
\[
u_{n}=T\left(  z^{n}\boldsymbol{a}\right)  ^{-1}1\text{, \ }u^{\left(
n\right)  }=T\left(  \boldsymbol{a}\right)  ^{-1}z^{n}\in H_{+}\text{ \ for
}n\in\mathbb{Z}\text{.}%
\]
Then, there exist constants $\left\{  c_{kj}^{+}\right\}  _{1\leq j\leq k}$,
$\left\{  c_{kj}^{-}\right\}  _{-k\leq j\leq0}$ such that%
\begin{equation}
u_{k}=\left\{
\begin{array}
[c]{ll}%
\sum_{1\leq j\leq k}c_{kj}^{+}u^{\left(  -j\right)  } & \text{if }k\geq1\\
\sum_{k\leq j\leq0}c_{kj}^{-}u^{\left(  j\right)  } & \text{if }k\leq0
\end{array}
\right.  \text{with }c_{kk}^{+}\neq0\text{, }c_{kk}^{-}=1\text{,} \label{26}%
\end{equation}
which implies $\left\{  u_{n}\right\}  _{n\in\mathbb{Z}}$ and $\left\{
f_{n}=\boldsymbol{a}u_{n}\right\}  _{n\in\mathbb{Z}}$ are linearly independent.
\end{lemma}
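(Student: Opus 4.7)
My plan is to compute $T(\boldsymbol{a})u_k$ explicitly as a short Laurent polynomial in $z$ and then invert $T(\boldsymbol{a})$ to read off the coefficients. The starting identity $T(z^k\boldsymbol{a})u_k=1$ means $\mathfrak{p}_+(z^k\boldsymbol{a}u_k)=1$, hence $z^k\boldsymbol{a}u_k=1+\varphi$ with $\varphi:=\varphi^{(0)}_{z^k\boldsymbol{a}}\in H_-$. Dividing by $z^k$ and applying $\mathfrak{p}_+$ gives
\[
T(\boldsymbol{a})u_k=z^{-k}+\mathfrak{p}_+\!\left(z^{-k}\varphi\right),
\]
using that $z^{-k}\in H_+$ for every $k\in\mathbb{Z}$ (since $0\notin D_+$). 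All the work is in computing $\mathfrak{p}_+(z^{-k}\varphi)$, and this splits into two cases according to where the obstruction to membership in $H_-$ sits.

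For $k\geq 1$, the only obstruction is a pole of order $k$ at $0\in D_-^2$. Taylor-expanding $\varphi(z)=\sum_{n\geq 0}a_nz^n$ near $0$, the polar part $\sum_{n=0}^{k-1}a_nz^{n-k}$ lies in $H_+$, while the remainder is analytic at $0$ and $O(z^{-k-1})$ at infinity, hence in $H_-$. This yields $T(\boldsymbol{a})u_k=(1+a_0)z^{-k}+\sum_{j=1}^{k-1}a_{k-j}z^{-j}$; the leading coefficient $1+\varphi^{(0)}_{z^k\boldsymbol{a}}(0)$ is nonzero by Lemma~\ref{l4}. Applying $T(\boldsymbol{a})^{-1}$ gives $u_k=\sum_{j=1}^{k}c^+_{kj}u^{(-j)}$ with $c^+_{kk}\neq 0$. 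For $k\leq 0$, the pole vanishes but now $z^{-k}\varphi$ grows at infinity on $D_-^1$. Expanding $\varphi(z)=\sum_{n\geq 1}b_nz^{-n}$ there, the polynomial part $\sum_{n=1}^{-k}b_nz^{-k-n}$ (of degree $-k-1$ in $z$) lies in $H_+$, and the remainder decays at infinity and is analytic at $0$, hence lies in $H_-$. Therefore $T(\boldsymbol{a})u_k$ is a polynomial in $z$ of degree $-k$ with leading coefficient $1$ at $z^{-k}$, and $T(\boldsymbol{a})^{-1}$ expresses $u_k$ as a combination of the $u^{(j)}$ indexed in $[k,0]$ with $c^-_{kk}=1$.

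Linear independence of $\{u^{(n)}\}_{n\in\mathbb{Z}}$ follows at once from the bijectivity of $T(\boldsymbol{a})$ and the independence of $\{z^n\}$ in $H_+$. The expansions above are triangular: for $k\geq 1$ the combination $u_k$ uses only $u^{(-j)}$ with $1\leq j\leq k$ and has a nonzero coefficient on the "top" element $u^{(-k)}$, while for $k\leq 0$ it uses only the complementary block and has $c^-_{kk}\neq 0$ on its top element. The positive-$k$ and non-positive-$k$ blocks live in disjoint parts of $\{u^{(j)}\}$, so a finite vanishing linear combination $\sum_k\alpha_ku_k=0$ splits into its two blocks, and within each block reading off successively the leading coefficients forces all $\alpha_k=0$. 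Independence of $\{f_n=\boldsymbol{a}u_n\}$ follows by applying $\mathfrak{p}_+$ to $\sum_n\beta_nf_n=0$ to get $T(\boldsymbol{a})\sum_n\beta_nu_n=0$; injectivity of $T(\boldsymbol{a})$ then reduces matters to the independence of $\{u_n\}$.

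The main obstacle is the clean $H_+/H_-$ splitting of $z^{-k}\varphi$; once one recognizes that the obstruction to $z^{-k}\varphi\in H_-$ is located at $0$ for $k\geq 1$ (a polar part, isolated by Taylor expansion at $0$) and at $\infty$ for $k\leq 0$ (a polynomial growth part, isolated by Laurent expansion at $\infty$), each projection becomes a finite explicit sum and the rest is bookkeeping. The single non-formal input is Lemma~\ref{l4}, which provides $1+\varphi^{(0)}_{z^k\boldsymbol{a}}(0)\neq 0$ and hence $c^+_{kk}\neq 0$.
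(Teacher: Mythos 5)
Your proof is correct and is essentially the paper's argument in different clothing: the paper extracts the same Laurent-polynomial form of $T\left(\boldsymbol{a}\right)u_{k}$ via the Cauchy-kernel identity $\frac{\lambda^{k}-z^{k}}{\lambda-z}=\sum_{j}\lambda^{j}z^{k-1-j}$, which amounts to exactly the Taylor/Laurent coefficient extraction you perform at $0$ and at $\infty$, and it arrives at the same leading coefficient $c_{kk}^{+}=1+\varphi_{z^{k}\boldsymbol{a}}^{\left(0\right)}\left(0\right)$ (the paper re-derives its non-vanishing through tau-function quotients where you simply invoke Lemma \ref{l4}, which is equally valid). The only caveat is indexing: as your own computation shows, the $k\leq0$ block is supported on $\left\{u^{\left(0\right)},\dots,u^{\left(-k\right)}\right\}$ (non-negative superscripts), which is precisely what makes it disjoint from the $k\geq1$ block and justifies your splitting argument.
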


\begin{proof}
For $k\geq1$ observe%
\begin{align*}
1  &  =T\left(  z^{k}\boldsymbol{a}\right)  u_{k}=\dfrac{1}{2\pi i}\int
_{C}\dfrac{\lambda^{k}-z^{k}}{\lambda-z}\boldsymbol{a}u_{k}\left(
\lambda\right)  d\lambda+z^{k}T\left(  \boldsymbol{a}\right)  u_{k}\\
&  =\sum_{0\leq j\leq k-1}z^{k-1-j}\dfrac{1}{2\pi i}\int_{C}\lambda
^{j}\boldsymbol{a}u_{k}\left(  \lambda\right)  d\lambda+z^{k}T\left(
\boldsymbol{a}\right)  u_{k}\text{.}%
\end{align*}
Multiplying $T\left(  \boldsymbol{a}\right)  ^{-1}z^{-k}$, one has%
\[
T\left(  \boldsymbol{a}\right)  ^{-1}z^{-k}=\sum_{0\leq j\leq k-1}\left(
\dfrac{1}{2\pi i}\int_{C}\lambda^{j}\boldsymbol{a}u_{k}\left(  \lambda\right)
d\lambda\right)  T\left(  \boldsymbol{a}\right)  ^{-1}z^{-1-j}+u_{k}\text{,}%
\]
hence%
\begin{align*}
u_{k}  &  =\left(  1-\dfrac{1}{2\pi i}\int_{C}\lambda^{k-1}\boldsymbol{a}%
u_{k}\left(  \lambda\right)  d\lambda\right)  -\sum_{1\leq j\leq k-1}\left(
\dfrac{1}{2\pi i}\int_{C}\lambda^{j}\boldsymbol{a}u_{k}\left(  \lambda\right)
d\lambda\right)  u^{\left(  -j\right)  }\\
&  =\sum_{1\leq j\leq k}c_{kj}^{+}u^{\left(  -j\right)  }\text{ \ with
\ }c_{kj}^{+}=\left\{
\begin{array}
[c]{ll}%
1-\dfrac{1}{2\pi i}%
{\displaystyle\int_{C}}
\lambda^{k-1}\boldsymbol{a}u_{k}\left(  \lambda\right)  d\lambda & \text{if
\ }j=k\\
-\dfrac{1}{2\pi i}\int_{C}\lambda^{j-1}\boldsymbol{a}u_{k}\left(
\lambda\right)  d\lambda & \text{if \ }1\leq j<k
\end{array}
\right.  \text{.}%
\end{align*}
Here%
\[
\dfrac{1}{2\pi i}%
{\displaystyle\int_{C}}
\lambda^{k-1}\boldsymbol{a}u_{k}\left(  \lambda\right)  d\lambda=\dfrac
{1}{2\pi i}\int_{C}\lambda^{-1}\left(  1+\varphi_{z^{k}\boldsymbol{a}%
}^{\left(  0\right)  }\left(  \lambda\right)  \right)  d\lambda=-\varphi
_{z^{k}\boldsymbol{a}}^{\left(  0\right)  }\left(  0\right)
\]
hold, hence%
\[
c_{kk}^{+}=1+\varphi_{z^{k}\boldsymbol{a}}^{\left(  0\right)  }\left(
0\right)  =\tau_{z^{k}\boldsymbol{a}}\left(  z^{-1}\right)  =\dfrac
{\tau_{\boldsymbol{a}}\left(  z^{k-1}\right)  }{\tau_{\boldsymbol{a}}\left(
z^{k}\right)  }\neq0\text{.}%
\]
Similarly, for $k\leq0$%
\begin{align*}
1  &  =T\left(  z^{-k}\boldsymbol{a}\right)  u_{-k}=\dfrac{1}{2\pi i}\int
_{C}\dfrac{\lambda^{-k}-z^{-k}}{\lambda-z}\boldsymbol{a}u_{-k}\left(
\lambda\right)  d\lambda+z^{-k}T\left(  \boldsymbol{a}\right)  u_{-k}\\
&  =-\sum_{0\leq j\leq k-1}\left(  \dfrac{1}{2\pi i}\int_{C}\lambda
^{j-k}\boldsymbol{a}u_{-k}\left(  \lambda\right)  d\lambda\right)
z^{-1-j}+z^{-k}T\left(  \boldsymbol{a}\right)  u_{-k}%
\end{align*}
hold, hence one has%
\[
u_{-k}=u^{\left(  k\right)  }+\sum_{0\leq j\leq k-1}\left(  \dfrac{1}{2\pi
i}\int_{C}\lambda^{j-k}\boldsymbol{a}u_{-k}\left(  \lambda\right)
d\lambda\right)  u^{\left(  k-j-1\right)  }\text{,}%
\]
which shows the second identity of (\ref{26}) by setting%
\[
c_{kk}^{-}=1\text{ and }c_{kj}^{-}=\dfrac{1}{2\pi i}\int_{C}\lambda
^{-j-1}\boldsymbol{a}u_{-k}\left(  \lambda\right)  d\lambda\text{ for }k+1\leq
j\leq0\text{.}%
\]
The linear independence of $\left\{  u^{\left(  k\right)  }\right\}
_{k\in\mathbb{Z}}$ follows from that of $\left\{  z^{k}\right\}
_{k\in\mathbb{Z}}$, hence $\left\{  u_{k}\right\}  _{k\in\mathbb{Z}}$ is also
linearly independent, since the triangular matrices $\left(  c_{kj}%
^{+}\right)  _{1\leq j\leq k}$ and $\left(  c_{kj}^{-}\right)  _{k\leq j\leq
0}$ are invertible.\bigskip
\end{proof}

Our Jacobi operator $H_{q}$ is related with $L_{\boldsymbol{a}}$ by%
\[
H_{q}=\Lambda_{\boldsymbol{a}}^{-1}L_{\boldsymbol{a}}\Lambda_{\boldsymbol{a}%
}\text{ \ with a diagonal matrix }\Lambda_{\boldsymbol{a}}=\left(  d_{n}%
^{1/2}\right)  _{n}\text{,}%
\]
where $d_{n}=1+\varphi_{z^{n}\boldsymbol{a}}^{\left(  0\right)  }\left(
0\right)  $. Hence, for $q=q(t)$ with $a_{n}$, $b_{n}$ replaced by those of
$e^{t\widehat{p}}\boldsymbol{a}$ one has from (\ref{24})%
\begin{align*}
\partial_{t}H_{q\left(  t\right)  }  &  =-\left(  \partial_{t}\Lambda
_{e^{t\widehat{p}}\boldsymbol{a}}\right)  \Lambda_{e^{t\widehat{p}%
}\boldsymbol{a}}^{-2}L_{e^{t\widehat{p}}\boldsymbol{a}}\Lambda_{e^{t\widehat
{p}}\boldsymbol{a}}+\Lambda_{e^{t\widehat{p}}\boldsymbol{a}}^{-1}\left(
\partial_{t}L_{e^{t\widehat{p}}\boldsymbol{a}}\right)  \Lambda_{e^{t\widehat
{p}}\boldsymbol{a}}+\Lambda_{e^{t\widehat{p}}\boldsymbol{a}}^{-1}%
L_{e^{t\widehat{p}}\boldsymbol{a}}\partial_{t}\Lambda_{e^{t\widehat{p}%
}\boldsymbol{a}}\\
&  =\Lambda_{e^{t\widehat{p}}\boldsymbol{a}}^{-1}\left[  L_{e^{t\widehat{p}%
}\boldsymbol{a}},\Lambda_{e^{t\widehat{p}}\boldsymbol{a}}^{-1}\partial
_{t}\Lambda_{e^{t\widehat{p}}\boldsymbol{a}}+p\left(  L_{e^{t\widehat{p}%
}\boldsymbol{a}}\right)  _{-}\right]  \Lambda_{e^{t\widehat{p}}\boldsymbol{a}%
}\\
&  =\left[  H_{q\left(  t\right)  },\Lambda_{e^{t\widehat{p}}\boldsymbol{a}%
}^{-1}\partial_{t}\Lambda_{e^{t\widehat{p}}\boldsymbol{a}}+p\left(
H_{q\left(  t\right)  }\right)  _{-}\right]  \text{.}%
\end{align*}
Let $\lambda_{n}$ be the diagonal element of $p\left(  L_{e^{t\widehat{p}%
}\boldsymbol{a}}\right)  $. Then, the identity (\ref{24}) implies that
$\left[  L_{e^{t\widehat{p}}\boldsymbol{a}},p\left(  L_{e^{t\widehat{p}%
}\boldsymbol{a}}\right)  _{-}\right]  $ takes a form%
\[
\left(  \left[  L_{e^{t\widehat{p}}\boldsymbol{a}},p\left(  L_{e^{t\widehat
{p}}\boldsymbol{a}}\right)  _{-}\right]  f\right)  _{n}=p_{n}f_{n+1}%
+q_{n}f_{n}+r_{n}f_{n-1}\text{,}%
\]
hence%
\begin{align*}
\left(  \left[  L_{e^{t\widehat{p}}\boldsymbol{a}},p\left(  L_{e^{t\widehat
{p}}\boldsymbol{a}}\right)  _{-}\right]  f\right)  _{n}  &  =\left(
L_{e^{t\widehat{p}}\boldsymbol{a}}p\left(  L_{e^{t\widehat{p}}\boldsymbol{a}%
}\right)  _{-}f\right)  _{n}-\left(  p\left(  L_{e^{t\widehat{p}%
}\boldsymbol{a}}\right)  _{-}L_{e^{t\widehat{p}}\boldsymbol{a}}f\right)
_{n}\\
&  =a_{n+1}^{2}\left(  \lambda_{n+1}-\lambda_{n}\right)  f_{n+1}+q_{n}%
f_{n}+r_{n}f_{n-1}\text{,}%
\end{align*}
which yields%
\[
\partial_{t}a_{n+1}^{2}=a_{n+1}^{2}\left(  \lambda_{n+1}-\lambda_{n}\right)
\text{.}%
\]
Since $\left(  \Lambda_{e^{t\widehat{p}}\boldsymbol{a}}^{-1}\partial
_{t}\Lambda_{e^{t\widehat{p}}\boldsymbol{a}}\right)  _{n}=\left(  \partial
_{t}\log d_{n}\right)  /2$, it holds that for $n\geq1$%
\begin{align*}
\partial_{t}\log d_{n}  &  =\partial_{t}\log d_{0}-\sum_{k=1}^{n}\partial
_{t}\log a_{k}^{2}\text{ \ \ (due to }a_{k}^{2}=d_{k-1}/d_{k}\text{)}\\
&  =\partial_{t}\log d_{0}-\sum_{k=1}^{n}\left(  \lambda_{k}-\lambda
_{k-1}\right)  =\partial_{t}\log d_{0}+\lambda_{0}-\lambda_{n}\text{.}%
\end{align*}
This identity holds also for $n\leq0$, hence we have%
\begin{align*}
\partial_{t}H_{q\left(  t\right)  }  &  =\left[  H_{q\left(  t\right)
},p\left(  H_{q\left(  t\right)  }\right)  _{-}-\left(  \lambda_{n}/2\right)
\right] \\
&  =\left[  H_{q\left(  t\right)  },p\left(  H_{q\left(  t\right)  }\right)
_{-}-p\left(  H_{q\left(  t\right)  }\right)  /2-\left(  \lambda_{n}/2\right)
\right]  =\left[  H_{q\left(  t\right)  },-p\left(  H_{q\left(  t\right)
}\right)  _{a}/2\right]  \text{,}%
\end{align*}
where $X_{a}$ is defined in (\ref{4}). Therefore, replacing $p$ by $-2p$, we have

\begin{proposition}
\label{p1}Let $p$ be a polynomial and assume $z^{n}e^{t\widehat{p}%
}\boldsymbol{a}\in\boldsymbol{A}^{inv}\left(  C\right)  $ for any
$n\in\mathbb{Z}$ and $t\in\mathbb{R}$. Let $q(t)$ be the coefficients
consisting of $\left\{  a_{n},b_{n}\right\}  $ of (\ref{21}) for the symbol
$e^{-2t\widehat{p}}\boldsymbol{a}$. Then, $H_{q\left(  t\right)  }$ satisfies
an operator equation%
\[
\partial_{t}H_{q\left(  t\right)  }=\left[  H_{q\left(  t\right)  },p\left(
H_{q\left(  t\right)  }\right)  _{a}\right]  \text{.}%
\]
Especially, if $p(z)=z$, this provides a Toda lattice:%
\begin{equation}
\left\{
\begin{array}
[c]{l}%
\partial_{t}a_{n}=a_{n}\left(  b_{n}-b_{n-1}\right) \\
\partial_{t}b_{n}=2\left(  a_{n+1}^{2}-a_{n}^{2}\right)
\end{array}
\right.  \text{.} \label{25}%
\end{equation}

\end{proposition}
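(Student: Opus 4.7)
The plan is to deduce the operator equation from Lemma \ref{l12} by conjugating the Lax equation for $L_{e^{t\widehat{p}}\boldsymbol{a}}$ over to the symmetric Jacobi operator $H_q$, and then rewriting the resulting commutator in the antisymmetric $(\cdot)_a$-form modulo an operator that commutes with $H_q$. Writing $L = L_{e^{t\widehat{p}}\boldsymbol{a}}$ and $\Lambda = \Lambda_{e^{t\widehat{p}}\boldsymbol{a}}$, so that $H_{q(t)} = \Lambda^{-1}L\Lambda$, differentiation of this similarity together with Lemma \ref{l12} yields
\[
\partial_t H_{q(t)} = [H_{q(t)},\, \Lambda^{-1}\partial_t\Lambda + p(H_{q(t)})_-],
\]
where I use that $\Lambda$ is diagonal (hence commutes with $\partial_t\Lambda$) and that conjugation by $\Lambda$ preserves the lower-triangular-plus-diagonal structure, so $\Lambda^{-1}p(L)_-\Lambda = p(H_{q(t)})_-$.

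To identify $\Lambda^{-1}\partial_t\Lambda$, observe it equals $\tfrac{1}{2}\operatorname{diag}(\partial_t\log d_n)$ with $d_n = 1+\varphi_{z^n\boldsymbol{a}}^{(0)}(0)$. Reading off the super-diagonal of $\partial_t L = [L,p(L)_-]$ yields $\partial_t a_{n+1}^2 = a_{n+1}^2(\lambda_{n+1}-\lambda_n)$, where $\lambda_n$ denotes the diagonal of $p(L)$ (equivalently of $p(H_{q(t)})$). Combined with $a_k^2 = d_{k-1}/d_k$, a telescoping sum starting from $d_0$ produces $\partial_t\log d_n = \partial_t\log d_0 + \lambda_0 - \lambda_n$. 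The additive constant $(\partial_t\log d_0+\lambda_0)/2$ is a scalar multiple of the identity and drops out of the bracket, so inside the commutator one may replace $\Lambda^{-1}\partial_t\Lambda$ by $-\lambda_n/2$.

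The crux is the triangular identity $p(H_{q(t)})_- - \lambda_n/2 - p(H_{q(t)})/2 = -p(H_{q(t)})_a/2$, verified by comparing strict upper, diagonal, and strict lower entries on both sides. Since $[H_{q(t)},p(H_{q(t)})] = 0$, this collapses to $\partial_t H_{q(t)} = [H_{q(t)},-p(H_{q(t)})_a/2]$, and the substitution $p\mapsto -2p$ (equivalently, the use of the symbol $e^{-2t\widehat{p}}\boldsymbol{a}$) delivers the announced equation. For $p(z) = z$, the antisymmetric operator $(H_q)_a$ is tridiagonal with entries $a_{n+1}$ at $(n,n+1)$ and $-a_n$ at $(n,n-1)$; direct computation of the $(n,n)$ and $(n,n+1)$ entries of $[H_q,(H_q)_a]$ recovers the Toda lattice (\ref{25}). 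The main technical hurdle is precisely the triangular-decomposition bookkeeping of the key identity; all other steps are either contained in Lemma \ref{l12} or reduce to elementary matrix algebra.
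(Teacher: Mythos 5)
Your argument is correct and is essentially the paper's own: the passage preceding Proposition \ref{p1} performs exactly this conjugation of (\ref{24}) by the diagonal matrix $\Lambda_{e^{t\widehat{p}}\boldsymbol{a}}$, the telescoping identification of $\Lambda^{-1}\partial_{t}\Lambda$ with $-\lambda_{n}/2$ up to a scalar multiple of the identity, and the triangular rewriting $p(H)_{-}-p(H)/2-\lambda_{n}/2=-p(H)_{a}/2$, the displayed proof then checking (\ref{25}) by direct computation. One small caveat: your formula $(H_{q})_{a}\colon u\mapsto a_{n+1}u_{n+1}-a_{n}u_{n-1}$ is the literal reading of (\ref{4}), whereas the paper's verification of (\ref{25}) uses the opposite sign for $(H_{q})_{a}$; with your (correct) convention the direct computation of $[H_{q},(H_{q})_{a}]$ yields (\ref{25}) with reversed signs, so the sign bookkeeping among (\ref{4}), the substitution $p\mapsto-2p$, and (\ref{25}) deserves one more pass.
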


\begin{proof}
We have only to verify (\ref{25}). If $p(z)=z$, then $\left(  p\left(
H_{q\left(  t\right)  }\right)  _{a}f\right)  _{n}=a_{n}f_{n-1}-a_{n+1}%
f_{n+1}$, hence%
\begin{align*}
&  \left(  \left[  H_{q\left(  t\right)  },p\left(  H_{q\left(  t\right)
}\right)  _{a}\right]  f\right)  _{n}\\
&  =a_{n+1}\left(  a_{n+1}u_{n}-a_{n+2}u_{n+2}\right)  +b_{n}\left(
a_{n}u_{n-1}-a_{n+1}u_{n+1}\right)  +a_{n}\left(  a_{n-1}u_{n-2}-a_{n}%
u_{n}\right) \\
&  +a_{n+1}\left(  a_{n+2}u_{n+2}+b_{n+1}u_{n+1}+a_{n+1}u_{n}\right)
-a_{n}\left(  a_{n}u_{n}+b_{n-1}u_{n-1}+a_{n-1}u_{n-2}\right) \\
&  =a_{n+1}\left(  b_{n+1}-b_{n}\right)  u_{n+1}+2\left(  a_{n+1}^{2}%
-a_{n}^{2}\right)  u_{n}+a_{n}\left(  b_{n}-b_{n-1}\right)  u_{n-1}\text{,}%
\end{align*}
which leads us to (\ref{25}).
\end{proof}

\section{$m$-function and non-vanishing of tau functions}

In the previous sections we have shown that symbols $\boldsymbol{a}$ generate
the Toda hierarchy under a certain non-degenerate condition on
$g\boldsymbol{a}$. However, as we will see later, the correspondence between
$\boldsymbol{a}$ and $q=\left\{  a_{n},b_{n}\right\}  _{n\in\mathbb{Z}}$ is
not one-to-one. Different symbols $\boldsymbol{a}$ may create the same $q$. An
essential quantity giving one-to one correspondence is called an $m$-function,
which is defined by%
\begin{equation}
m_{\boldsymbol{a}}\left(  z\right)  =\dfrac{z+\varphi_{\boldsymbol{a}%
}^{\left(  1\right)  }\left(  z\right)  }{1+\varphi_{\boldsymbol{a}}^{\left(
0\right)  }\left(  z\right)  }+\lim_{\zeta\rightarrow\infty}\zeta
\varphi_{\boldsymbol{a}}^{\left(  0\right)  }\left(  \zeta\right)
=\dfrac{z+\varphi_{\boldsymbol{a}}^{\left(  1\right)  }\left(  z\right)
}{1+\varphi_{\boldsymbol{a}}^{\left(  0\right)  }\left(  z\right)  }%
+\varphi_{\widetilde{\boldsymbol{a}}}^{\left(  -1\right)  }\left(  0\right)
\label{27}%
\end{equation}
for $\boldsymbol{a}\in\boldsymbol{A}^{inv}\left(  C\right)  $. The constant
term is added so that $m_{\boldsymbol{a}}$ satisfies%
\[
m_{\boldsymbol{a}}\left(  z\right)  =z+O\left(  z^{-1}\right)  \text{ \ as
}z\rightarrow\infty\text{.}%
\]
$m$-functions also play a key role to show the non-vanishing of tau functions
if we assume some non-negativity condition on tau functions.

Since $\varphi_{\boldsymbol{a}}^{\left(  0\right)  }\left(  z\right)
=O\left(  z^{-1}\right)  $ as $z\rightarrow\infty$, we see that $1+\varphi
_{\boldsymbol{a}}^{\left(  0\right)  }$ is not identically $0$ on $D_{-}%
^{1}\left(  =D_{-}\cap\left\{  \left\vert z\right\vert >1\right\}  \right)  $.
However, there is a possibility that $1+\varphi_{\boldsymbol{a}}^{\left(
0\right)  }$ is identically $0$ on $D_{-}^{2}\left(  =D_{-}\cap\left\{
\left\vert z\right\vert <1\right\}  \right)  $, which makes it impossible to
define $m_{\boldsymbol{a}}$ on $D_{-}^{2}$. In this section we investigate
conditions under which the properties $1+\varphi_{\boldsymbol{a}}^{\left(
0\right)  }\left(  z\right)  \neq0$ on $D_{-}^{2}$ and $\tau_{\boldsymbol{a}%
}\left(  g\right)  \neq0$ hold.

For simplicity of notations we introduce an auxiliary quantity%
\begin{equation}
n_{\boldsymbol{a}}\left(  z\right)  =\dfrac{z^{-1}+\varphi_{\boldsymbol{a}%
}^{\left(  -1\right)  }\left(  z\right)  }{1+\varphi_{\boldsymbol{a}}^{\left(
0\right)  }\left(  z\right)  }=\dfrac{z+z^{-1}-m_{\boldsymbol{a}}\left(
z\right)  }{1+\varphi_{\boldsymbol{a}}^{\left(  0\right)  }\left(  0\right)
}\text{ (Lemma \ref{l2}),} \label{28}%
\end{equation}
which has a tau function's expression:%
\begin{equation}
n_{\boldsymbol{a}}\left(  \zeta\right)  =\dfrac{1+\varphi_{\widetilde
{\boldsymbol{a}}}^{\left(  0\right)  }\left(  \zeta^{-1}\right)  }%
{\zeta\left(  1+\varphi_{\boldsymbol{a}}^{\left(  0\right)  }\left(
\zeta\right)  \right)  }=\dfrac{\tau_{\widetilde{\boldsymbol{a}}}\left(
q_{\zeta^{-1}}\right)  }{\zeta\tau_{\boldsymbol{a}}\left(  q_{\zeta}\right)
}=\dfrac{\tau_{\boldsymbol{a}}\left(  zq_{\zeta}\right)  }{\zeta
\tau_{\boldsymbol{a}}\left(  q_{\zeta}\right)  }\text{ \ (Lemmas \ref{l2},
\ref{l9}).} \label{28-1}%
\end{equation}

\begin{lemma}
\label{l13-1}For $\boldsymbol{a}\in\boldsymbol{A}^{inv}\left(  C\right)  $,
$\zeta\in D_{-}$ assume $\tau_{\boldsymbol{a}}\left(  q_{\zeta}\right)  $,
$\tau_{\boldsymbol{a}}\left(  z^{-1}\right)  \neq0$. Then, we have%
\begin{equation}
m_{q_{\zeta}\boldsymbol{a}}\left(  z\right)  =\left(  m_{\boldsymbol{a}%
}\left(  0\right)  -m_{\boldsymbol{a}}\left(  \zeta\right)  \right)  \left(
1-\dfrac{\phi\left(  z\right)  -\phi\left(  \zeta\right)  }{m_{\boldsymbol{a}%
}\left(  z\right)  -m_{\boldsymbol{a}}\left(  \zeta\right)  }\right)
+\phi\left(  z\right)  \text{ on }D_{-} \label{28-4}%
\end{equation}
with $\phi\left(  z\right)  =z+z^{-1}$. Especially, setting $\zeta=0$, we have%
\begin{equation}
m_{z^{-1}\boldsymbol{a}}\left(  z\right)  =\dfrac{m_{\boldsymbol{a}}^{\prime
}\left(  0\right)  }{m_{\boldsymbol{a}}\left(  0\right)  -m_{\boldsymbol{a}%
}\left(  z\right)  }+\phi\left(  z\right)  \text{ on }D_{-}\text{.}
\label{28-5}%
\end{equation}

\end{lemma}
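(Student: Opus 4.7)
The plan is to reduce (\ref{28-4}) to a pair of explicit tau-function identities. First, applying Lemma \ref{l2}(ii) at $n=0$ with $\boldsymbol{a}$ replaced by $q_\zeta\boldsymbol{a}$ gives
\[
m_{q_\zeta\boldsymbol{a}}(z) = \phi(z) - (1 + \varphi_{q_\zeta\boldsymbol{a}}^{(0)}(0))\,n_{q_\zeta\boldsymbol{a}}(z),
\]
and rewriting the RHS of (\ref{28-4}) using the analogous identity $\phi - m_{\boldsymbol{a}} = (1 + \varphi_{\boldsymbol{a}}^{(0)}(0))\,n_{\boldsymbol{a}}$ from (\ref{28}) shows that (\ref{28-4}) is equivalent to the pair of claims \textbf{(a)} $1 + \varphi_{q_\zeta\boldsymbol{a}}^{(0)}(0) = (1 + \varphi_{\boldsymbol{a}}^{(0)}(0))(m_{\boldsymbol{a}}(\zeta) - m_{\boldsymbol{a}}(0))/\zeta$ and \textbf{(b)} $n_{q_\zeta\boldsymbol{a}}(z) = -\zeta\,(n_{\boldsymbol{a}}(z) - n_{\boldsymbol{a}}(\zeta))/(m_{\boldsymbol{a}}(z) - m_{\boldsymbol{a}}(\zeta))$.

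For (a) I will first rewrite Lemma \ref{l9}(ii) using the factorization $\zeta_j + \varphi_{\boldsymbol{a}}^{(1)}(\zeta_j) = (1 + \varphi_{\boldsymbol{a}}^{(0)}(\zeta_j))(m_{\boldsymbol{a}}(\zeta_j) - \varphi_{\widetilde{\boldsymbol{a}}}^{(-1)}(0))$ (immediate from (\ref{27})) to obtain the cleaner form
\[
\tau_{\boldsymbol{a}}(q_{\zeta_1}q_{\zeta_2}) = \tau_{\boldsymbol{a}}(q_{\zeta_1})\tau_{\boldsymbol{a}}(q_{\zeta_2})\,\frac{m_{\boldsymbol{a}}(\zeta_1) - m_{\boldsymbol{a}}(\zeta_2)}{\zeta_1 - \zeta_2}.
\]
Combining this with the cocycle property (Lemma \ref{l8}(iii)) for $g_1 = q_\zeta$, $g_2 = q_w$ and with Lemma \ref{l9}(i) yields $1 + \varphi_{q_\zeta\boldsymbol{a}}^{(0)}(w) = (1 + \varphi_{\boldsymbol{a}}^{(0)}(w))(m_{\boldsymbol{a}}(\zeta) - m_{\boldsymbol{a}}(w))/(\zeta - w)$, and setting $w = 0$ delivers (a). For (b) I apply (\ref{28-1}) to $\boldsymbol{b} = q_\zeta\boldsymbol{a}$ together with the cocycle to write $n_{q_\zeta\boldsymbol{a}}(w) = \tau_{\boldsymbol{a}}(\lambda q_\zeta q_w)/[w\,\tau_{\boldsymbol{a}}(q_\zeta q_w)]$. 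The denominator is known from the formula just derived, and applying (\ref{28-1}) once more to $\boldsymbol{a}$ reduces (b) to the Pl\"ucker-type bilinear identity
\[
(\zeta - w)\,\tau_{\boldsymbol{a}}(\lambda q_\zeta q_w) = \zeta\,\tau_{\boldsymbol{a}}(q_\zeta)\,\tau_{\boldsymbol{a}}(\lambda q_w) - w\,\tau_{\boldsymbol{a}}(q_w)\,\tau_{\boldsymbol{a}}(\lambda q_\zeta).
\]
To prove this identity I will exploit the partial-fraction decomposition $\lambda q_\zeta q_w = \frac{\zeta w}{w - \zeta}(q_\zeta - q_w)$, which reduces $H_{\lambda q_\zeta q_w}$ to the rank-two operator $\frac{\zeta w}{w - \zeta}(H_{q_\zeta} - H_{q_w})$. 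The Fredholm determinant $\tau_{\boldsymbol{a}}(\lambda q_\zeta q_w) = \det(I + K)$ with $K$ of rank two then expands explicitly as a $2\times 2$ determinant, and its rearrangement using the rank-one formulas for $\tau_{\boldsymbol{a}}(q_\zeta)$ and $\tau_{\boldsymbol{a}}(\lambda q_\zeta)$ yields the required identity.

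Finally, (\ref{28-5}) follows from (\ref{28-4}) by sending $\zeta \to 0$: since $q_\zeta(\lambda)/(-\zeta) \to \lambda^{-1}$ uniformly on $C$ and tau functions (and hence $m$) are invariant under nonzero constant rescaling of the symbol, Lemma \ref{l10} yields $m_{q_\zeta\boldsymbol{a}}(z) \to m_{z^{-1}\boldsymbol{a}}(z)$; on the right-hand side, the $\zeta^{-1}$-divergence of $\phi(\zeta)$ is cancelled by the vanishing factor $m_{\boldsymbol{a}}(0) - m_{\boldsymbol{a}}(\zeta) = -m_{\boldsymbol{a}}'(0)\zeta + O(\zeta^2)$, producing exactly the finite limit $m_{\boldsymbol{a}}'(0)/(m_{\boldsymbol{a}}(0) - m_{\boldsymbol{a}}(z)) + \phi(z)$. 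The main obstacle is the bilinear identity in (b): it is not a direct consequence of Lemma \ref{l9}, and its proof requires the rank-two Fredholm-determinant computation sketched above; an alternative would be to verify it by checking that both sides, viewed as meromorphic functions of $w \in D_-$ for fixed $\zeta$, have matching poles, zeros, and asymptotics at $\infty$, invoking uniqueness.
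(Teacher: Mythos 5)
Your proposal is correct, and it is worth recording how it differs from the paper's argument. Your step \textbf{(a)} is exactly the paper's route: the paper derives $1+\varphi_{q_{\zeta}\boldsymbol{a}}^{(0)}(\eta)=\bigl(1+\varphi_{\boldsymbol{a}}^{(0)}(\eta)\bigr)\frac{m_{\boldsymbol{a}}(\zeta)-m_{\boldsymbol{a}}(\eta)}{\zeta-\eta}$ from Lemma \ref{l9} and the cocycle property, just as you do. The divergence is in step \textbf{(b)}. The paper computes $\eta^{-1}+\varphi_{q_{\zeta}\boldsymbol{a}}^{(-1)}$ \emph{without} tau functions: it observes that $q_{\zeta}^{-1}\bigl(\eta^{-1}+\varphi_{q_{\zeta}\boldsymbol{a}}^{(-1)}\bigr)=\boldsymbol{a}T(q_{\zeta}\boldsymbol{a})^{-1}z^{-1}$ lies in $W_{\boldsymbol{a}}$, computes its $\mathfrak{p}_{+}$-projection, and uses the bijectivity of $\mathfrak{p}_{+}:W_{\boldsymbol{a}}\to H_{+}$ to identify it as $\eta^{-1}+\varphi_{\boldsymbol{a}}^{(-1)}-(1+c_{1})\zeta^{-1}\bigl(1+\varphi_{\boldsymbol{a}}^{(0)}\bigr)$, then pins down the unknown constant by evaluating at $\eta=\zeta$ (where $q_{\zeta}^{-1}$ vanishes). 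You instead push everything into the tau-function calculus via (\ref{28-1}) and reduce to the bilinear identity $(\zeta-w)\,\tau_{\boldsymbol{a}}(zq_{\zeta}q_{w})=\zeta\,\tau_{\boldsymbol{a}}(q_{\zeta})\tau_{\boldsymbol{a}}(zq_{w})-w\,\tau_{\boldsymbol{a}}(q_{w})\tau_{\boldsymbol{a}}(zq_{\zeta})$, proved by expanding the rank-two Fredholm determinant coming from $zq_{\zeta}q_{w}=\frac{\zeta w}{w-\zeta}(q_{\zeta}-q_{w})$. I checked that this $2\times2$ expansion does close up (using $\tau_{\boldsymbol{a}}(zq_{\zeta})=1+\zeta\varphi_{\boldsymbol{a}}^{(-1)}(\zeta)$), so your sketch is completable, though you should actually carry out the expansion since that determinant computation is the entire content of the step. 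The paper's method is shorter because the Grassmannian bijectivity argument replaces the determinant algebra; yours has the merit of making the underlying Pl\"ucker-type relation among tau functions explicit, which is in the spirit of Sato's theory. Your treatment of the limit $\zeta\to0$ for (\ref{28-5}) (scaling invariance of $\tau$ and $m$ under $\boldsymbol{a}\mapsto c\boldsymbol{a}$, cancellation of the $\zeta^{-1}$ pole of $\phi(\zeta)$ against $m_{\boldsymbol{a}}(0)-m_{\boldsymbol{a}}(\zeta)=O(\zeta)$) is also correct and is in fact more careful than the paper's terse ``setting $\zeta=0$''.
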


\begin{proof}
First we compute $n_{q_{\zeta}\boldsymbol{a}}\left(  \eta\right)  $. From
Lemma \ref{l9} we have%
\begin{equation}
1+\varphi_{q_{\zeta}\boldsymbol{a}}^{\left(  0\right)  }\left(  \eta\right)
=\tau_{q_{\zeta}\boldsymbol{a}}\left(  q_{\eta}\right)  =\dfrac{\tau
_{\boldsymbol{a}}\left(  q_{\zeta}q_{\eta}\right)  }{\tau_{\boldsymbol{a}%
}\left(  q_{\zeta}\right)  }=\left(  1+\varphi_{\boldsymbol{a}}^{\left(
0\right)  }\left(  \eta\right)  \right)  \dfrac{m_{\boldsymbol{a}}\left(
\zeta\right)  -m_{\boldsymbol{a}}\left(  \eta\right)  }{\zeta-\eta}\text{.}
\label{28-2}%
\end{equation}
We compute $\eta^{-1}+\varphi_{q_{\zeta}\boldsymbol{a}}^{\left(  -1\right)
}\left(  \eta\right)  $ without using tau functions. Set $c_{1}=\lim
_{z\rightarrow\infty}z\varphi_{q_{\zeta}\boldsymbol{a}}^{\left(  -1\right)
}\left(  z\right)  $. Since%
\[
q_{\zeta}^{-1}\left(  \eta^{-1}+\varphi_{q_{\zeta}\boldsymbol{a}}^{\left(
-1\right)  }\right)  =q_{\zeta}^{-1}q_{\zeta}\boldsymbol{a}T\left(  q_{\zeta
}\boldsymbol{a}\right)  ^{-1}z^{-1}=\boldsymbol{a}T\left(  q_{\zeta
}\boldsymbol{a}\right)  ^{-1}z^{-1}\in W_{\boldsymbol{a}}%
\]
and%
\[
\mathfrak{p}_{+}\left(  q_{\zeta}^{-1}\left(  \eta^{-1}+\varphi_{q_{\zeta
}\boldsymbol{a}}^{\left(  -1\right)  }\right)  \right)  =\left(  1-\zeta
^{-1}\eta\right)  \eta^{-1}-c_{1}\zeta^{-1}%
\]
hold, we have%
\begin{equation}
q_{\zeta}^{-1}\left(  \eta^{-1}+\varphi_{q_{\zeta}\boldsymbol{a}}^{\left(
-1\right)  }\right)  =\eta^{-1}+\varphi_{\boldsymbol{a}}^{\left(  -1\right)
}-\left(  1+c_{1}\right)  \zeta^{-1}\left(  1+\varphi_{\boldsymbol{a}%
}^{\left(  0\right)  }\right)  \text{.} \label{28-3}%
\end{equation}
Substituting $\eta=\zeta$ yields%
\[
\left(  1+c_{1}\right)  \zeta^{-1}=\dfrac{\zeta^{-1}+\varphi_{\boldsymbol{a}%
}^{\left(  -1\right)  }\left(  \zeta\right)  }{1+\varphi_{\boldsymbol{a}%
}^{\left(  0\right)  }\left(  \zeta\right)  }=n_{\boldsymbol{a}}\left(
\zeta\right)  \text{,}%
\]
hence (\ref{28-3}) shows%
\[
\dfrac{\eta^{-1}+\varphi_{q_{\zeta}\boldsymbol{a}}^{\left(  -1\right)  }%
}{1+\varphi_{\boldsymbol{a}}^{\left(  0\right)  }}=\left(  n_{\boldsymbol{a}%
}-n_{\boldsymbol{a}}\left(  \zeta\right)  \right)  q_{\zeta}\text{,}%
\]
which together with (\ref{28-2}) leads us to%
\[
\dfrac{m_{\boldsymbol{a}}\left(  \zeta\right)  -m_{\boldsymbol{a}}\left(
\eta\right)  }{\zeta-\eta}n_{q_{\zeta}\boldsymbol{a}}\left(  \eta\right)
=\zeta\dfrac{n_{\boldsymbol{a}}\left(  \eta\right)  -n_{\boldsymbol{a}}\left(
\zeta\right)  }{\zeta-\eta}\Longrightarrow n_{q_{\zeta}\boldsymbol{a}}\left(
\eta\right)  =\zeta\dfrac{n_{\boldsymbol{a}}\left(  \eta\right)
-n_{\boldsymbol{a}}\left(  \zeta\right)  }{m_{\boldsymbol{a}}\left(
\zeta\right)  -m_{\boldsymbol{a}}\left(  \eta\right)  }\text{.}%
\]
Then, (\ref{28}) yields the proof of (\ref{28-4}).\bigskip
\end{proof}

Set%
\begin{equation}
\left\{
\begin{array}
[c]{l}%
\Gamma_{\operatorname{real}}\left(  C\right)  =\left\{  g\in\Gamma\text{;
}g=\overline{g}\text{,\ \ }g\text{ has no zeros nor poles on }\overline{D}%
_{+}\right\} \\
\boldsymbol{A}_{+}^{inv}\left(  C\right)  =\left\{  \boldsymbol{a}%
\in\boldsymbol{A}^{inv}\left(  C\right)  \text{; }\boldsymbol{a}%
=\overline{\boldsymbol{a}}\text{, }\tau_{\boldsymbol{a}}\left(  r\right)
\geq0\text{ for any rational }r\in\Gamma_{\operatorname{real}}\left(
C\right)  \right\}
\end{array}
\text{,}\right.  \label{29}%
\end{equation}
\ where we use the notation%
\[
\overline{g}\left(  z\right)  =\overline{g\left(  \overline{z}\right)
}\text{, \ \ }\overline{\boldsymbol{a}}\left(  \lambda\right)  =\overline
{\boldsymbol{a}\left(  \overline{\lambda}\right)  }\text{.}%
\]
The continuity of $\tau_{\boldsymbol{a}}$ implies%
\begin{equation}
\boldsymbol{A}_{+}^{inv}\left(  C\right)  =\left\{  \boldsymbol{a}%
\in\boldsymbol{A}^{inv}\left(  C\right)  \text{; }\boldsymbol{a}%
=\overline{\boldsymbol{a}}\text{, }\tau_{\boldsymbol{a}}\left(  g\right)
\geq0\text{ for any }g\in\Gamma_{\operatorname{real}}\left(  C\right)
\right\}  \text{.} \label{31}%
\end{equation}
The condition $\boldsymbol{a}=\overline{\boldsymbol{a}}$ is necessary to have
real $a_{n}$, $b_{n}$. (ii) of Lemma \ref{l8} implies%
\[
\widetilde{\boldsymbol{a}}\in\boldsymbol{A}_{+}^{inv}\left(  C\right)  \text{
holds for }\boldsymbol{a}\in\boldsymbol{A}_{+}^{inv}\left(  C\right)  \text{.}%
\]

\begin{lemma}
\label{l14}For $\boldsymbol{a}\in\boldsymbol{A}_{+}^{inv}\left(  C\right)  $
we have the followings:\newline(i) \ $1+\varphi_{\boldsymbol{a}}^{\left(
0\right)  }\left(  \zeta\right)  \neq0$, $\operatorname{Im}m_{\boldsymbol{a}%
}\left(  \zeta\right)  >0$ on $D_{-}^{1}$ hold.\newline Assume further
$1+\varphi_{\boldsymbol{a}}^{\left(  0\right)  }\left(  0\right)  $
$(=\tau_{\boldsymbol{a}}\left(  z^{-1}\right)  )>0$ and $\tau_{\boldsymbol{a}%
}\left(  z^{-2}\right)  >0$. Then we have\newline(ii) $1+\varphi
_{\boldsymbol{a}}^{\left(  0\right)  }\left(  \zeta\right)  \neq0$ on $D_{-}$,
and $m_{\boldsymbol{a}}^{\prime}\left(  0\right)  >0$, $\operatorname{Im}%
m_{\boldsymbol{a}}\left(  \zeta\right)  >0$, $\operatorname{Im}m_{z^{-1}%
\boldsymbol{a}}\left(  \zeta\right)  >\operatorname{Im}\left(  \zeta
+\zeta^{-1}\right)  $ on $D_{-}^{1}\cap\mathbb{C}_{+}$.
\end{lemma}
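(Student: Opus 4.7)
The plan is to convert the non-negativity $\tau_{\boldsymbol{a}}(g)\geq 0$ on $\Gamma_{\operatorname{real}}(C)$ into a Herglotz-type statement for $m_{\boldsymbol{a}}$ by applying Lemma \ref{l9}(ii) to complex-conjugate pairs $(\zeta,\bar\zeta)$, and then to bootstrap to part (ii) using the cocycle identity of Lemma \ref{l8} together with the transformation formula (\ref{28-5}).

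Write $A(\zeta)=\zeta+\varphi_{\boldsymbol{a}}^{(1)}(\zeta)$ and $B(\zeta)=1+\varphi_{\boldsymbol{a}}^{(0)}(\zeta)$, so that $m_{\boldsymbol{a}}=A/B+c_{\boldsymbol{a}}$ with $c_{\boldsymbol{a}}=\varphi_{\widetilde{\boldsymbol{a}}}^{(-1)}(0)\in\mathbb{R}$ (Lemmas \ref{l1}, \ref{l8}(ii) with $\overline{\boldsymbol{a}}=\boldsymbol{a}$ give $\overline{\widetilde{\boldsymbol{a}}}=\widetilde{\boldsymbol{a}}$). Reality also yields $\overline{A(\zeta)}=A(\bar\zeta)$ and $\overline{B(\zeta)}=B(\bar\zeta)$. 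Setting $\zeta_1=\zeta$, $\zeta_2=\bar\zeta$ in Lemma \ref{l9}(ii) produces
\[
\tau_{\boldsymbol{a}}(q_\zeta q_{\bar\zeta})=\frac{\operatorname{Im}(A(\zeta)\overline{B(\zeta)})}{\operatorname{Im}\zeta},
\]
whose left-hand side is $\geq 0$ because $q_\zeta q_{\bar\zeta}\in\Gamma_{\operatorname{real}}(C)$. Thus $\operatorname{Im}(A\bar B)\geq 0$ on $D_-\cap\mathbb{C}_+$, and wherever $B\neq 0$, $\operatorname{Im} m_{\boldsymbol{a}}=\operatorname{Im}(A\bar B)/|B|^2\geq 0$. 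For part (i), $B\not\equiv 0$ on the connected set $D_-^1$ since $B(\zeta)\to 1$ at infinity. A putative zero $\zeta_0\in D_-^1\cap\mathbb{C}_+$ with $A(\zeta_0)\neq 0$ makes $A\bar B\sim A(\zeta_0)\overline{B'(\zeta_0)(\zeta-\zeta_0)}$, whose imaginary part changes sign as $\zeta$ circles $\zeta_0$, contradicting $\operatorname{Im}(A\bar B)\geq 0$ throughout the small disc at $\zeta_0$ (which lies entirely in $\mathbb{C}_+$); simultaneous vanishing $A(\zeta_0)=B(\zeta_0)=0$ is prevented by Lemma \ref{l3} ($\Delta_{\boldsymbol{a}}(\zeta_0)\neq 0$). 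Conjugation handles $\mathbb{C}_-$, and the strict inequality $\operatorname{Im} m_{\boldsymbol{a}}>0$ then follows from the maximum principle applied to $\exp(-im_{\boldsymbol{a}})$, since $m_{\boldsymbol{a}}(\zeta)\sim\zeta$ at infinity precludes constancy.

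For part (ii), note $z^{-1}\in\Gamma_{\operatorname{real}}(C)$, so Lemma \ref{l8}(iii) gives $\tau_{z^{-1}\boldsymbol{a}}(g)=\tau_{\boldsymbol{a}}(z^{-1}g)/\tau_{\boldsymbol{a}}(z^{-1})\geq 0$, whence $z^{-1}\boldsymbol{a}\in\boldsymbol{A}_+^{inv}(C)$; similarly $\widetilde{\boldsymbol{a}}\in\boldsymbol{A}_+^{inv}(C)$. Applying part (i) to $\widetilde{\boldsymbol{a}}$ and translating via Lemma \ref{l2}(i), combined with the Herglotz analysis run directly on $D_-^2\cap\mathbb{C}_+$ (anchored at the origin by $B(0)=\tau_{\boldsymbol{a}}(z^{-1})>0$), extends $B\neq 0$ to all of $D_-$. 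The strict inequality $m_{\boldsymbol{a}}'(0)>0$ should follow from the tau expression $a_0^2=\tau_{\boldsymbol{a}}(z^{-2})/\tau_{\boldsymbol{a}}(z^{-1})^2>0$ of Lemma \ref{l11}, combined with the Laurent expansion of $n_{\boldsymbol{a}}$ at $0$ through identity (\ref{28}). Finally, (\ref{28-5})
\[
m_{z^{-1}\boldsymbol{a}}(\zeta)-\phi(\zeta)=\frac{m_{\boldsymbol{a}}'(0)}{m_{\boldsymbol{a}}(0)-m_{\boldsymbol{a}}(\zeta)}
\]
gives $\operatorname{Im}(m_{z^{-1}\boldsymbol{a}}-\phi)=m_{\boldsymbol{a}}'(0)\operatorname{Im} m_{\boldsymbol{a}}(\zeta)/|m_{\boldsymbol{a}}(0)-m_{\boldsymbol{a}}(\zeta)|^2>0$ on $D_-^1\cap\mathbb{C}_+$, which is the last asserted inequality.

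I expect the delicate step to be excluding real zeros of $B$: a Herglotz function may have real poles with negative residue, so the plane-oscillation argument collapses on the real axis. The contradiction must instead be extracted from the degenerate limit $\tau_{\boldsymbol{a}}(q_{\zeta_0}^2)=(BA'-AB')(\zeta_0)\geq 0$ of Lemma \ref{l9}(ii), combined with $\Delta_{\boldsymbol{a}}(\zeta_0)\neq 0$ from Lemma \ref{l3} and, for part (ii), the strict positivity of $\tau_{\boldsymbol{a}}(z^{-1})$ and $\tau_{\boldsymbol{a}}(z^{-2})$.
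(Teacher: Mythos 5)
Your strategy is essentially the paper's: both rest on the identity $\tau_{\boldsymbol{a}}\left(q_{\zeta}q_{\overline{\zeta}}\right)=\operatorname{Im}\left(A\left(\zeta\right)\overline{B\left(\zeta\right)}\right)/\operatorname{Im}\zeta\geq0$ from Lemma \ref{l9}(ii) (the paper's (\ref{30})), on deducing $m_{\boldsymbol{a}}^{\prime}\left(0\right)=\tau_{\boldsymbol{a}}\left(z^{-2}\right)/\tau_{\boldsymbol{a}}\left(z^{-1}\right)^{2}$ from Lemma \ref{l9}, and on (\ref{28-5}) for the last inequality; your local sign-oscillation argument at a non-real zero of $B=1+\varphi_{\boldsymbol{a}}^{\left(0\right)}$ is a workable substitute for the paper's minimum-principle argument applied to $-m_{\boldsymbol{a}}^{-1}$. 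However, your appeal to Lemma \ref{l3} to exclude $A\left(\zeta_{0}\right)=B\left(\zeta_{0}\right)=0$ is not immediate: $\Delta_{\boldsymbol{a}}$ is built from $\varphi_{\boldsymbol{a}}^{\left(-1\right)}$, not $\varphi_{\boldsymbol{a}}^{\left(1\right)}$, so one must first use Lemma \ref{l2}(ii) at $n=0$ together with $1+\varphi_{\boldsymbol{a}}^{\left(0\right)}\left(0\right)\neq0$ (Lemma \ref{l4}) to convert a common zero of $A$ and $B$ into a zero of $\Delta_{\boldsymbol{a}}$.

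The genuine gap is exactly where you suspected it, and the repair you propose does not work. On $D_{-}^{1}\cap\mathbb{R}$ one has $\tau_{\boldsymbol{a}}\left(q_{x}\right)=B\left(x\right)\geq0$, so a real zero $x_{0}$ is an interior minimum of the real-analytic function $B$; hence $B^{\prime}\left(x_{0}\right)=0$ and your confluent inequality $\left(BA^{\prime}-AB^{\prime}\right)\left(x_{0}\right)\geq0$ degenerates to $0\geq0$, giving no contradiction. The paper instead extracts from the same family of tau inequalities that $m_{\boldsymbol{a}}^{\prime}\geq0$ on a punctured real interval around $x_{0}$, and then derives the contradiction $m_{\boldsymbol{a}}\left(x_{0}-0\right)=+\infty\Rightarrow A\left(x_{0}\right)>0\Rightarrow m_{\boldsymbol{a}}\left(x_{0}+0\right)=+\infty$, incompatible with monotonicity to the right of $x_{0}$. (A repair closer to your sketch: use the non-confluent inequality $\tau_{\boldsymbol{a}}\left(q_{x_{0}}q_{y}\right)=B\left(y\right)A\left(x_{0}\right)/\left(x_{0}-y\right)\geq0$ for $y$ on both sides of $x_{0}$, which forces $A\left(x_{0}\right)=0$ and then contradicts $\Delta_{\boldsymbol{a}}\left(x_{0}\right)\neq0$ as above.) A second, smaller defect is in (ii): applying (i) to $\widetilde{\boldsymbol{a}}$ and using Lemma \ref{l2}(i) controls $\zeta+\varphi_{\boldsymbol{a}}^{\left(-1\right)}\left(\zeta^{-1}\right)$, not $1+\varphi_{\boldsymbol{a}}^{\left(0\right)}$ on $D_{-}^{2}$; the paper's route is to rerun the whole argument of (i) on $D_{-}^{2}$, using $m_{\boldsymbol{a}}^{\prime}\left(0\right)>0$ (available only under the extra hypotheses of (ii)) to rule out constancy of $m_{\boldsymbol{a}}$ there.
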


\begin{proof}
Set%
\[
\mathcal{Z}=\left\{  \zeta\in D_{-}\text{; \ }1+\varphi_{\boldsymbol{a}%
}^{\left(  0\right)  }\left(  \zeta\right)  =0\right\}  \text{.}%
\]
$1+\varphi_{\boldsymbol{a}}^{\left(  0\right)  }\left(  \zeta\right)  $ is not
identically $0$ on $D_{-}^{1}$, since $1+\varphi_{\boldsymbol{a}}^{\left(
0\right)  }\left(  \zeta\right)  \rightarrow1$ as $\zeta\rightarrow\infty$.
Hence $m_{\boldsymbol{a}}\left(  \zeta\right)  $ is meromorphic on $D_{-}^{1}%
$. Lemma \ref{l9} yields an identity
\begin{equation}
0\leq\tau_{\boldsymbol{a}}\left(  q_{z}q_{\overline{z}}\right)  =\left\vert
1+\varphi_{\boldsymbol{a}}^{\left(  0\right)  }\left(  z\right)  \right\vert
^{2}\dfrac{\operatorname{Im}m_{\boldsymbol{a}}\left(  z\right)  }%
{\operatorname{Im}z}\text{ for }z\in D_{-}^{1}\backslash\mathcal{Z}\text{,}
\label{30}%
\end{equation}
hence $\operatorname{Im}m_{\boldsymbol{a}}\left(  \zeta\right)  \geq0$ on
$D_{-}^{1}\cap\mathbb{C}_{+}$. Let $\zeta_{0}\in\mathcal{Z}\cap D_{-}^{1}%
\cap\mathbb{C}_{+}$. Then, $\operatorname{Im}\left(  -m_{\boldsymbol{a}%
}\left(  \zeta\right)  ^{-1}\right)  \geq0$ on a neighborhood of $\zeta_{0}$
and $\operatorname{Im}\left(  -m_{\boldsymbol{a}}\left(  \zeta_{0}\right)
^{-1}\right)  =0$ hold, hence the minimum principle for harmonic functions
implies $\operatorname{Im}\left(  -m_{\boldsymbol{a}}\left(  \zeta\right)
^{-1}\right)  =0$ identically on $D_{-}^{1}\cap\mathbb{C}_{+}$, which shows
$m_{\boldsymbol{a}}$ is a constant on $D_{-}^{1}\cap\mathbb{C}_{+}$. This
contradicts $m_{\boldsymbol{a}}\left(  \zeta\right)  -\zeta\rightarrow0$ as
$\zeta\rightarrow\infty$, hence we have $\mathcal{Z}\cap D_{-}^{1}%
\cap\mathbb{C}_{+}=\varnothing$. Since $\varphi_{\boldsymbol{a}}^{\left(
0\right)  }=\overline{\varphi}_{\boldsymbol{a}}^{\left(  0\right)  }$,
$m_{\boldsymbol{a}}=\overline{m}_{\boldsymbol{a}}$ hold, $\mathcal{Z}%
\cap\left(  D_{-}^{1}\backslash\mathbb{R}\right)  =\varnothing$ is valid.

For $x\in D_{-}^{1}\cap\mathbb{R}$ one has from (i) of (\ref{28})%
\[
0\leq\tau_{\boldsymbol{a}}\left(  q_{x}\right)  =1+\varphi_{\boldsymbol{a}%
}^{\left(  0\right)  }\left(  x\right)  \text{ }\ \text{(}q_{\zeta}\left(
z\right)  =\left(  1-\zeta^{-1}z\right)  ^{-1}\text{).}%
\]
Suppose $1+\varphi_{\boldsymbol{a}}^{\left(  0\right)  }\left(  x_{0}\right)
=0$ holds for some $x_{0}\in D_{-}^{1}\cap\mathbb{R}$. Choose an interval
$(a,b)\subset D_{-}^{1}\cap\mathbb{R}$ such that $x_{0}\in(a,b)$ and
$1+\varphi_{\boldsymbol{a}}^{\left(  0\right)  }\left(  x\right)  >0$ for any
$x\in(a,b)\backslash\left\{  x_{0}\right\}  $. (\ref{30}) implies
$m_{\boldsymbol{a}}^{\prime}\left(  x\right)  \geq0$ for $x\in(a,b)\backslash
\left\{  x_{0}\right\}  $. Hence we have%
\[
m_{\boldsymbol{a}}\left(  x_{0}-0\right)  =+\infty\Longrightarrow
x_{0}+\varphi_{\boldsymbol{a}}^{\left(  1\right)  }\left(  x_{0}\right)
>0\Longrightarrow m_{\boldsymbol{a}}\left(  x_{0}+0\right)  =+\infty\text{,}%
\]
which contradicts $m_{\boldsymbol{a}}^{\prime}\left(  x\right)  \geq0$.
Therefore, $\mathcal{Z}\cap D_{-}^{1}=\varnothing$ holds. The property
$\operatorname{Im}m_{\boldsymbol{a}}\left(  \zeta\right)  >0$ on $D_{-}%
^{1}\cap\mathbb{C}_{+}$ follows from $\operatorname{Im}m_{\boldsymbol{a}%
}\left(  \zeta\right)  \geq0$ and $m_{\boldsymbol{a}}\left(  \zeta\right)
-\zeta\rightarrow0$ as $\zeta\rightarrow\infty$, which shows (i).

To show (ii) observe an identity%
\[
\dfrac{\tau_{\boldsymbol{a}}\left(  z^{-1}q_{\zeta}\right)  }{\tau
_{\boldsymbol{a}}\left(  q_{\zeta}\right)  \tau_{\boldsymbol{a}}\left(
z^{-1}\right)  }=\dfrac{m_{\boldsymbol{a}}\left(  \zeta\right)
-m_{\boldsymbol{a}}\left(  0\right)  }{\zeta}%
\]
deduced from Lemma \ref{l9}, which shows $m_{\boldsymbol{a}}^{\prime}\left(
0\right)  =\tau_{\boldsymbol{a}}\left(  z^{-2}\right)  /\tau_{\boldsymbol{a}%
}\left(  z^{-1}\right)  ^{2}$. If $\tau_{\boldsymbol{a}}\left(  z^{-2}\right)
>0$, then $m_{\boldsymbol{a}}$ is not a constant on $D_{-}^{1}$, and similar
arguments as (i) are possible, and we have $1+\varphi_{\boldsymbol{a}%
}^{\left(  0\right)  }\left(  \zeta\right)  \neq0$ on $D_{-}$,
$\operatorname{Im}m_{\boldsymbol{a}}\left(  \zeta\right)  >0$ on $D_{-}%
^{2}\cap\mathbb{C}_{+}$. The inequality $\operatorname{Im}m_{z^{-1}%
\boldsymbol{a}}\left(  \zeta\right)  >\operatorname{Im}\left(  \zeta
+\zeta^{-1}\right)  $ on $D_{-}^{1}\cap\mathbb{C}_{+}$ follows from
(\ref{28-5}).\bigskip
\end{proof}

For $\boldsymbol{a}\in\boldsymbol{A}_{+}^{inv}\left(  C\right)  $ one can show
partially the property $\tau_{\boldsymbol{a}}\left(  g\right)  >0$. For
$\zeta\in D_{-}$ set%
\[
r_{\zeta}\left(  z\right)  =q_{\zeta}\left(  z\right)  q_{\overline{\zeta}%
}\left(  z\right)  \in\Gamma_{\operatorname{real}}\left(  C\right)  \text{.}%
\]

\begin{lemma}
\label{l15}If $\boldsymbol{a}\in\boldsymbol{A}_{+}^{inv}\left(  C\right)  $,
then $\tau_{\boldsymbol{a}}\left(  re^{h}\right)  >0$ for any rational
function $r\in\Gamma_{\operatorname{real}}\left(  C\right)  $ with no poles
nor zeros on $D_{-}^{2}$ and analytic function $h$ on $\overline{D}_{+}$
satisfying $h=\overline{h}$.
\end{lemma}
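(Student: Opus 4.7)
The plan rests on the following key principle that runs through the argument: for any $\boldsymbol{c}\in\boldsymbol{A}_{+}^{inv}(C)$ and any $g_{1}\in\Gamma_{\operatorname{real}}(C)$ with the $z\leftrightarrow z^{-1}$ symmetry $g_{1}(z)=g_{1}(z^{-1})$, Lemma \ref{l8}(iv) forces $\tau_{\boldsymbol{c}}(g_{1})\neq 0$, which combined with $\tau_{\boldsymbol{c}}(g_{1})\geq 0$ from (\ref{31}) yields $\tau_{\boldsymbol{c}}(g_{1})>0$. Moreover, whenever $\tau_{\boldsymbol{c}}(g)>0$ for some $g\in\Gamma_{\operatorname{real}}(C)$, the cocycle identity $\tau_{g\boldsymbol{c}}(h)=\tau_{\boldsymbol{c}}(gh)/\tau_{\boldsymbol{c}}(g)\geq 0$ shows $g\boldsymbol{c}\in\boldsymbol{A}_{+}^{inv}(C)$, so the base symbol may be updated at each stage without leaving the class. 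Since $\tau_{\boldsymbol{a}}(re^{h})\geq 0$ is already known, only strict non-vanishing must be established. I will handle the rational and exponential factors separately, chaining them via the cocycle (Lemma \ref{l8}(iii)).

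\textbf{Rational factor.} The hypotheses put all zeros and poles of $r$ into $D_{-}^{1}$, so $r$ factors as $\epsilon\prod_{j}R_{j}$ with $\epsilon\in\{\pm 1\}$ and each primitive $R_{j}\in\Gamma_{\operatorname{real}}(C)$ equal to $q_{\zeta}^{\pm 1}$ (for real $\zeta\in D_{-}^{1}$) or $(q_{\zeta}q_{\overline{\zeta}})^{\pm 1}$ (for $\zeta\in D_{-}^{1}\cap\mathbb{C}_{+}$). For pole primitives (positive exponent), Lemma \ref{l9}(i) together with Lemma \ref{l14}(i) gives positivity in the real case, and the explicit formula (\ref{30}) from the proof of Lemma \ref{l14} handles the complex pair. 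For a zero primitive $R=q_{\zeta}^{-1}$, I apply Lemma \ref{l8}(iv) with symmetric $g_{1}=q_{\zeta}^{-1}\widetilde{q_{\zeta}^{-1}}$ and $g_{2}=\widetilde{q_{\zeta}}$, obtaining
\[
\tau_{\boldsymbol{a}}(q_{\zeta}^{-1}) \;=\; \tau_{\boldsymbol{a}}(q_{\zeta}^{-1}\widetilde{q_{\zeta}^{-1}})\cdot\tau_{\boldsymbol{a}}(\widetilde{q_{\zeta}}),
\]
where the first factor is positive by the opening principle and the second equals $\tau_{\widetilde{\boldsymbol{a}}}(q_{\zeta})>0$ via Lemma \ref{l8}(ii) and the pole-primitive argument applied to $\widetilde{\boldsymbol{a}}\in\boldsymbol{A}_{+}^{inv}(C)$; an entirely analogous manipulation treats $(q_{\zeta}q_{\overline{\zeta}})^{-1}$. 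Iterated cocycle (with each intermediate $R_{1}\cdots R_{k}\boldsymbol{a}$ staying in $\boldsymbol{A}_{+}^{inv}(C)$ by the opening principle) produces $\tau_{\boldsymbol{a}}(r)>0$. Since $\tau_{\boldsymbol{a}}(\pm 1)=1$, the sign $\epsilon$ is harmless.

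\textbf{Exponential factor and main obstacle.} By cocycle $\tau_{\boldsymbol{a}}(re^{h})=\tau_{\boldsymbol{a}}(r)\cdot\tau_{r\boldsymbol{a}}(e^{h})$ with $r\boldsymbol{a}\in\boldsymbol{A}_{+}^{inv}(C)$, so setting $\boldsymbol{b}:=r\boldsymbol{a}$ it remains to show $\tau_{\boldsymbol{b}}(e^{h})>0$. Splitting $h=h_{s}+h_{a}$ with $h_{s}(z)=\tfrac{1}{2}(h(z)+h(z^{-1}))$ symmetric and $h_{a}(z)=\tfrac{1}{2}(h(z)-h(z^{-1}))$ anti-symmetric, Lemma \ref{l8}(iv) factors
\[
\tau_{\boldsymbol{b}}(e^{h}) \;=\; \tau_{\boldsymbol{b}}(e^{h_{s}})\cdot\tau_{\boldsymbol{b}}(e^{h_{a}}),
\]
and the first factor is positive by the opening principle. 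The hard part is $\tau_{\boldsymbol{b}}(e^{h_{a}})$: since $\widetilde{e^{h_{a}}}=e^{-h_{a}}=(e^{h_{a}})^{-1}$ (equivalently $e^{h_{a}}$ is unimodular on the unit circle, where $h_{a}$ is purely imaginary), the symmetrization trick only recovers the trivial identity $\tau_{\boldsymbol{b}}(1)=1$ and does not yield non-vanishing. To close this, I would deform along $\sigma(t)=\tau_{\boldsymbol{b}}(e^{th_{a}})$ for $t\in[0,1]$, which is real-analytic (Lemma \ref{l10}), satisfies $\sigma(0)=1$ and $\sigma(t)\geq 0$, and show it has no zero by transporting the Herglotz property of $m_{\boldsymbol{b}}$ from Lemma \ref{l14}(i) along the flow using a transformation formula in the spirit of Lemma \ref{l13-1} extended from $q_{\zeta}$ to $e^{th_{a}}$. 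Preserving $\operatorname{Im}m_{e^{th_{a}}\boldsymbol{b}}>0$ throughout the one-parameter family precludes $T(e^{th_{a}}\boldsymbol{b})$ from developing a kernel, giving $\sigma(1)>0$. This Herglotz-preservation under the anti-symmetric exponential flow is the main technical obstacle I expect.
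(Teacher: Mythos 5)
Your treatment of the rational factor is essentially the paper's argument: all poles and zeros of $r$ sit in $D_{-}^{1}$, the pole primitives are handled by Lemma \ref{l14} together with Lemma \ref{l9}, and the zero primitives are absorbed by multiplying with the symmetric combination $q_{\zeta}\widetilde{q}_{\zeta}$ and invoking Lemma \ref{l8} (ii) and (iv); the paper symmetrizes the whole denominator $r_{2}$ at once rather than primitive by primitive, but that is only a difference in bookkeeping. The problem is the exponential factor. Your decomposition $h=h_{s}+h_{a}$ into the $z\leftrightarrow z^{-1}$ symmetric and antisymmetric parts correctly isolates $\tau_{\boldsymbol{b}}\left(  e^{h_{a}}\right)  $ as the remaining issue, but you then do not prove it is positive: the proposed deformation $\sigma\left(  t\right)  =\tau_{\boldsymbol{b}}\left(  e^{th_{a}}\right)  $ with $\sigma\left(  0\right)  =1$, $\sigma\geq0$ does not by itself exclude a zero of $\sigma$, and the "Herglotz transport" you would need rests on an extension of Lemma \ref{l13-1} from $q_{\zeta}$ to exponential symbols that is neither stated nor proved. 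Worse, the $m$-function machinery is only available while $e^{th_{a}}\boldsymbol{b}$ stays in $\boldsymbol{A}^{inv}\left(  C\right)  $, i.e.\ while $\sigma\left(  t\right)  \neq0$, so the argument would be circular exactly at the point where it is needed. This is a genuine gap, and you flag it yourself as unresolved.

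The paper closes this step by a different decomposition of $h$: not by parity under $z\mapsto z^{-1}$, but by the two boundary components, writing $h=h_{1}+h_{2}$ with $h_{j}$ the Cauchy integral of $h$ over $C_{j}$, so that $h_{1}$ is analytic on the simply connected set $\overline{D_{+}\cup D_{-}^{2}}$ and $h_{2}$ on $D_{+}\cup D_{-}^{1}$. Then $h_{1}$ is approximated by a real polynomial $p$, and $e^{p}=\lim_{n}\left(  1-p/n\right)  ^{-n}$ where $r_{1}=\left(  1-p/n\right)  ^{-n}$ is a product of $q_{x}$'s and $r_{\zeta}$'s with poles only in $D_{-}^{1}$; the continuity of the tau function (Lemma \ref{l10}) makes $\tau_{\boldsymbol{a}}\left(  r_{1}^{-1}e^{h_{1}}\right)  $ close to $\tau_{\boldsymbol{a}}\left(  1\right)  =1$, hence positive, and the cocycle identity $\tau_{\boldsymbol{a}}\left(  e^{h_{1}}\right)  =\tau_{\boldsymbol{a}}\left(  r_{1}^{-1}e^{h_{1}}\right)  \tau_{r_{1}^{-1}e^{h_{1}}\boldsymbol{a}}\left(  r_{1}\right)  $ reduces everything to the already-proved rational case for the updated symbol $r_{1}^{-1}e^{h_{1}}\boldsymbol{a}\in\boldsymbol{A}_{+}^{inv}\left(  C\right)  $. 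The factor $e^{h_{2}}$ is then handled by passing to $\widetilde{e^{h_{1}}\boldsymbol{a}}$ and $e^{\widetilde{h}_{2}}$ via Lemma \ref{l8} (ii). You would need either to reproduce this boundary-component reduction or to supply the missing Herglotz-preservation lemma for antisymmetric exponentials; as written, your proof of $\tau_{\boldsymbol{b}}\left(  e^{h_{a}}\right)  >0$ is absent.
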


\begin{proof}
Note that any rational function $r\in\Gamma_{\operatorname{real}}\left(
C\right)  $ with no poles nor zeros on $D_{-}^{2}$ can be expressed by
$r=r_{1}r_{2}^{-1}$%
\[
\left\{
\begin{array}
[c]{l}%
r_{1}=q_{x_{1}}q_{x_{2}}\cdots q_{x_{m}}r_{\zeta_{1}}r_{\zeta_{2}}\cdots
r_{\zeta_{n}}\text{ \ with }x_{j}\in D_{-}^{1}\cap\mathbb{R}\text{, }\zeta
_{j}\in D_{-}^{1}\backslash\mathbb{R}\\
r_{2}=q_{y_{1}}q_{y_{2}}\cdots q_{y_{m^{\prime}}}r_{\eta_{1}}r_{\eta_{2}%
}\cdots r_{\eta_{n^{\prime}}}\text{ \ with }y_{j}\in D_{-}^{1}\cap
\mathbb{R}\text{, }\eta_{j}\in D_{-}^{1}\backslash\mathbb{R}%
\end{array}
\right.  \text{.}%
\]
First we show $\tau_{\boldsymbol{a}}\left(  r_{1}\right)  >0$. For any $x\in
D_{-}^{1}\cap\mathbb{R}$, $\zeta\in D_{-}^{1}\backslash\mathbb{R}$ we have%
\[
\left\{
\begin{array}
[c]{l}%
\tau_{\boldsymbol{a}}\left(  q_{x}\right)  =1+\varphi_{\boldsymbol{a}%
}^{\left(  0\right)  }\left(  x\right)  >0\\
\tau_{\boldsymbol{a}}\left(  r_{\zeta}\right)  =\dfrac{\left\vert
1+\varphi_{\boldsymbol{a}}^{\left(  0\right)  }\left(  \zeta\right)
\right\vert ^{2}\operatorname{Im}m_{\boldsymbol{a}}\left(  \zeta\right)
}{\operatorname{Im}\zeta}>0
\end{array}
\right.
\]
\ from Lemma \ref{l14}. Then, $q_{x}\boldsymbol{a}$, $r_{\zeta}\boldsymbol{a}%
\in\boldsymbol{A}_{+}^{inv}\left(  C\right)  $ hold, and the property
$\tau_{\boldsymbol{a}}\left(  r_{1}\right)  >0$ can be shown inductively. For
general $r=r_{1}r_{2}^{-1}$ Lemma \ref{l8} shows%
\begin{align*}
\tau_{\boldsymbol{a}}\left(  r\right)   &  =\tau_{\boldsymbol{a}}\left(
r_{1}r_{2}^{-1}\right)  =\tau_{\boldsymbol{a}}\left(  r_{1}r_{2}%
^{-1}\widetilde{r}_{2}^{-1}\widetilde{r}_{2}\right)  =\tau_{\boldsymbol{a}%
}\left(  \left(  r_{2}\widetilde{r}_{2}\right)  ^{-1}\right)  \tau
_{\boldsymbol{a}}\left(  r_{1}\widetilde{r}_{2}\right) \\
&  =\tau_{\boldsymbol{a}}\left(  \left(  r_{2}\widetilde{r}_{2}\right)
^{-1}\right)  \tau_{\boldsymbol{a}}\left(  r_{1}\right)  \tau_{r_{1}%
\boldsymbol{a}}\left(  \widetilde{r}_{2}\right)  =\tau_{\boldsymbol{a}}\left(
\left(  r_{2}\widetilde{r}_{2}\right)  ^{-1}\right)  \tau_{\boldsymbol{a}%
}\left(  r_{1}\right)  \tau_{\widetilde{r_{1}\boldsymbol{a}}}\left(
r_{2}\right)  \text{,}%
\end{align*}
where $\tau_{\boldsymbol{a}}\left(  \left(  r_{2}\widetilde{r}_{2}\right)
^{-1}\right)  >0$ due to $\left(  r_{2}\widetilde{r}_{2}\right)  \left(
z\right)  =\left(  r_{2}\widetilde{r}_{2}\right)  \left(  z^{-1}\right)  $.
Then, $\tau_{\boldsymbol{a}}\left(  r\right)  >0$ is valid, since
$\widetilde{r_{1}\boldsymbol{a}}\in\boldsymbol{A}_{+}^{inv}\left(  C\right)  $.

On the other hand, note that for $z\in D_{+}$%
\begin{align*}
h(z)  &  =\dfrac{1}{2\pi i}\int_{C}\dfrac{h(\lambda)}{\lambda-z}%
d\lambda=\dfrac{1}{2\pi i}\int_{C_{1}}\dfrac{h(\lambda)}{\lambda-z}%
d\lambda+\dfrac{1}{2\pi i}\int_{C_{2}}\dfrac{h(\lambda)}{\lambda-z}d\lambda\\
&  \equiv h_{1}(z)+h_{2}(z)\text{,}%
\end{align*}
where $h_{1}$ is analytic on $D_{+}\cup D_{-}^{2}$ and $h_{2}$ is analytic on
$D_{+}\cup D_{-}^{1}$ (if it is necessary, we have only to move $C_{1}$ and
$C_{2}$ in the inside of a neighborhood of $\overline{D}_{+}$). We first
consider $h_{1}$. Since $\overline{D_{+}\cup D_{-}^{2}}$ is a simply connected
domain containing $0$, one sees that $h_{1}$ can be approximated by a real
polynomial $p$. Observe%
\[
e^{p}=\lim_{n\rightarrow\infty}\left(  1-\dfrac{p}{n}\right)  ^{-n}\text{
uniformly on }\overline{D_{+}\cup D_{-}^{2}}\text{,}%
\]
and%
\[
r_{1}\equiv\left(  1-\dfrac{p}{n}\right)  ^{-n}=q_{x_{1}}\cdots q_{x_{m_{1}}%
}r_{\zeta_{1}}\cdots r_{\zeta_{m_{2}}}\text{ for }\exists x_{j}\in D_{-}%
^{1}\cap\mathbb{R}\text{, }\zeta_{j}\in D_{-}^{1}\backslash\mathbb{R}\text{,}%
\]
since zeros of $1-p/n$ are located on $D_{-}^{1}$ for sufficiently large $n$.
Then, the continuity of the tau function implies $\tau_{\boldsymbol{a}}\left(
r_{1}^{-1}e^{h_{1}}\right)  >0$, hence%
\[
\tau_{\boldsymbol{a}}\left(  e^{h_{1}}\right)  =\tau_{\boldsymbol{a}}\left(
r_{1}^{-1}e^{h_{1}}r_{1}\right)  =\tau_{\boldsymbol{a}}\left(  r_{1}%
^{-1}e^{h_{1}}\right)  \tau_{r_{1}^{-1}e^{h_{1}}\boldsymbol{a}}\left(
r_{1}\right)
\]
is valid. Since $r_{1}^{-1}e^{h_{1}}\boldsymbol{a}\in\boldsymbol{A}_{+}%
^{inv}\left(  C\right)  $ due to (\ref{31}), applying the above argument to
$r_{1}^{-1}e^{h_{1}}\boldsymbol{a}$, one has $\tau_{r_{1}^{-1}e^{h_{1}%
}\boldsymbol{a}}\left(  r_{1}\right)  >0$, which shows $\tau_{\boldsymbol{a}%
}\left(  e^{h_{1}}\right)  >0$. Observe%
\[
\tau_{\boldsymbol{a}}\left(  e^{h}\right)  =\tau_{\boldsymbol{a}}\left(
e^{h_{1}}\right)  \tau_{e^{h_{1}}\boldsymbol{a}}\left(  e^{h_{2}}\right)
=\tau_{\boldsymbol{a}}\left(  e^{h_{1}}\right)  \tau_{\widetilde{e^{h_{1}%
}\boldsymbol{a}}}\left(  e^{\widetilde{h}_{2}}\right)  \text{.}%
\]
\ The property $e^{h_{1}}\boldsymbol{a}\in\boldsymbol{A}_{+}^{inv}\left(
C\right)  $ implies $\widetilde{e^{h_{1}}\boldsymbol{a}}\in\boldsymbol{A}%
_{+}^{inv}\left(  C\right)  $, hence $\tau_{\widetilde{e^{h_{1}}%
\boldsymbol{a}}}\left(  e^{\widetilde{h}_{2}}\right)  >0$ holds, which shows
$\tau_{\boldsymbol{a}}\left(  e^{h}\right)  >0$. Now $\tau_{\boldsymbol{a}%
}\left(  re^{h}\right)  >0$ clearly follows from the fact $r\boldsymbol{a}%
\in\boldsymbol{A}_{+}^{inv}\left(  C\right)  $.\bigskip
\end{proof}

$\boldsymbol{A}_{+}^{inv}\left(  C\right)  $ is not sufficient to have the
property $\tau_{\boldsymbol{a}}\left(  g\right)  >0$ for any $g\in
\Gamma_{\operatorname{real}}\left(  C\right)  $, since there is possibility
that $1+\varphi_{\boldsymbol{a}}^{\left(  0\right)  }\left(  z\right)  =0$
identically on $D_{-}^{2}$. We define%
\begin{equation}
\boldsymbol{A}_{++}^{inv}\left(  C\right)  =\left\{  \boldsymbol{a}%
\in\boldsymbol{A}_{+}^{inv}\left(  C\right)  \text{; \ }\tau_{\boldsymbol{a}%
}\left(  z^{n}\right)  >0\text{ \ for any }n\in\mathbb{Z}\right\}  \text{.}
\label{32}%
\end{equation}

\begin{proposition}
\label{p2}Suppose $\boldsymbol{a}\in\boldsymbol{A}_{++}^{inv}\left(  C\right)
$. Then, for any $g\in\Gamma_{\operatorname{real}}\left(  C\right)  $ one has
$\tau_{\boldsymbol{a}}\left(  g\right)  >0$ and $g\boldsymbol{a}%
\in\boldsymbol{A}_{++}^{inv}\left(  C\right)  $.
\end{proposition}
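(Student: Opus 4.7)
The plan is to reduce Proposition \ref{p2} to Lemma \ref{l15} via a divisor-based factorization of $g$ combined with the symmetry $f\mapsto\widetilde{f}$ (inversion $z\mapsto z^{-1}$). First I would write $g=re^h$ as dictated by membership in $\Gamma_{\operatorname{real}}(C)$, with $r$ a real rational function whose zeros and poles lie in $D_-=D_-^1\cup D_-^2$. A straightforward divisor bookkeeping yields a factorization
\[
r=z^{N}\cdot s\cdot r',
\]
where $N\in\mathbb{Z}$, $s\in\Gamma_{\operatorname{real}}(C)$ is a real rational function whose divisor is supported in $D_-^2$ and sums to zero, and $r'\in\Gamma_{\operatorname{real}}(C)$ is a real rational function whose divisor is supported in $D_-^1$ and sums to zero. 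In particular $r'$ has no poles nor zeros on $D_-^2$, so the pair $(r',h)$ is exactly of the type handled by Lemma \ref{l15}, while the factors $z^N$ and $s$ must be dealt with separately.

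The crucial step is to show $\tau_{\boldsymbol{a}}(s)>0$. To this end I would pass to $\widetilde{s}(z)=s(z^{-1})$, whose divisor is the image of $\operatorname{div}(s)$ under inversion and therefore lies in $D_-^1$, so $\widetilde{s}\in\Gamma_{\operatorname{real}}(C)$ is rational with no poles nor zeros on $D_-^2$. Since $\widetilde{\boldsymbol{a}}\in\boldsymbol{A}_+^{inv}(C)$ by the remark following (\ref{31}), Lemma \ref{l15} (with $h\equiv 0$) gives $\tau_{\widetilde{\boldsymbol{a}}}(\widetilde{s})>0$, and Lemma \ref{l8}(ii) then yields $\tau_{\boldsymbol{a}}(s)=\tau_{\widetilde{\boldsymbol{a}}}(\widetilde{s})>0$. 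From the cocycle identity (Lemma \ref{l8}(iii)) I obtain $\tau_{s\boldsymbol{a}}(r_0)=\tau_{\boldsymbol{a}}(sr_0)/\tau_{\boldsymbol{a}}(s)\geq 0$ for every rational $r_0\in\Gamma_{\operatorname{real}}(C)$, because $sr_0$ is still rational in $\Gamma_{\operatorname{real}}(C)$ and $\boldsymbol{a}\in\boldsymbol{A}_+^{inv}(C)$; the same tilde-argument applied to $sz^n$ (whose divisor is still in $D_-^2$) gives $\tau_{s\boldsymbol{a}}(z^n)>0$. Hence $s\boldsymbol{a}\in\boldsymbol{A}_{++}^{inv}(C)$.

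Iterating the cocycle reasoning once more, $z^N s\boldsymbol{a}\in\boldsymbol{A}_+^{inv}(C)$ with $\tau_{s\boldsymbol{a}}(z^N)>0$, and Lemma \ref{l15} applied to $z^N s\boldsymbol{a}$ and the pair $(r',h)$ produces $\tau_{z^N s\boldsymbol{a}}(r'e^h)>0$. Multiplying the three factors gives
\[
\tau_{\boldsymbol{a}}(g)=\tau_{\boldsymbol{a}}(s)\cdot\tau_{s\boldsymbol{a}}(z^N)\cdot\tau_{z^N s\boldsymbol{a}}(r'e^h)>0.
\]
For the second conclusion $g\boldsymbol{a}\in\boldsymbol{A}_{++}^{inv}(C)$, invertibility of $T(g\boldsymbol{a})$ follows from $\tau_{\boldsymbol{a}}(g)>0$ and Lemma \ref{l8}(i), and the conditions $\boldsymbol{a}=\overline{\boldsymbol{a}}$, $g=\overline{g}$ give $g\boldsymbol{a}=\overline{g\boldsymbol{a}}$; for any rational $r_0\in\Gamma_{\operatorname{real}}(C)$ (respectively any $z^n$) the product $gr_0$ (respectively $gz^n$) is still in $\Gamma_{\operatorname{real}}(C)$, so applying the positivity just established to this product yields $\tau_{g\boldsymbol{a}}(r_0)>0\geq 0$ (respectively $\tau_{g\boldsymbol{a}}(z^n)>0$). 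The main obstacle is the factor $s$ whose divisor may contain $0\in D_-^2$: the hypothesis $\tau_{\boldsymbol{a}}(z^n)>0$ for every $n$, built into $\boldsymbol{A}_{++}^{inv}(C)$, is precisely what controls the $z^N$ arising from the divisor decomposition, while inversion transfers the remaining $D_-^2$-divisor to $D_-^1$, where Lemma \ref{l15} takes over.
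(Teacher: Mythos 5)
Your overall strategy is genuinely different from the paper's. The paper never decomposes the divisor of $r$; instead it uses Lemma \ref{l14}(ii) (whose hypotheses $\tau_{\boldsymbol{a}}\left(  z^{-1}\right)  ,\tau_{\boldsymbol{a}}\left(  z^{-2}\right)  >0$ are supplied by $\boldsymbol{A}_{++}^{inv}$) to conclude $1+\varphi_{\boldsymbol{a}}^{\left(  0\right)  }\neq0$ on \emph{all} of $D_{-}$, hence $\tau_{\boldsymbol{a}}\left(  q_{x}\right)  >0$ and $\tau_{\boldsymbol{a}}\left(  r_{\zeta}\right)  >0$ also for $x,\zeta\in D_{-}^{2}$, and then reruns the inductive scheme of Lemma \ref{l15} with building blocks allowed anywhere in $D_{-}$. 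Your idea of transferring the $D_{-}^{2}$-part of the divisor to $D_{-}^{1}$ by the involution $f\mapsto\widetilde{f}$ and Lemma \ref{l8}(ii), so that Lemma \ref{l15} can be reused verbatim, is attractive because it bypasses the Herglotz/harmonic-function analysis of Lemma \ref{l14}(ii) entirely.

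However, there is one concrete gap: the claim that $\tau_{s\boldsymbol{a}}\left(  z^{n}\right)  >0$ follows from ``the same tilde-argument applied to $sz^{n}$, whose divisor is still in $D_{-}^{2}$.'' The divisor of $sz^{n}$ on the Riemann sphere contains the point at infinity with multiplicity $-n$, and under inversion this lands at $0\in D_{-}^{2}$: explicitly $\widetilde{sz^{n}}=\widetilde{s}\,z^{-n}$ has a zero or pole of order $n$ at $0$, so the hypothesis of Lemma \ref{l15} (no poles nor zeros on $D_{-}^{2}$) fails for $\widetilde{sz^{n}}$ whenever $n\neq0$. This is not a technicality --- it is exactly the phenomenon the paper warns about after Lemma \ref{l15} (functions whose tilde acquires a zero at $0$ are where $\boldsymbol{A}_{+}^{inv}$ alone is insufficient), and this step is load-bearing: it is the only place your argument produces $\tau_{s\boldsymbol{a}}\left(  z^{N}\right)  >0$, which you need for the three-factor cocycle product. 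The repair is short and stays within your framework: peel off the power of $z$ \emph{first} via the cocycle property, $\tau_{\boldsymbol{a}}\left(  sz^{n}\right)  =\tau_{\boldsymbol{a}}\left(  z^{n}\right)  \tau_{z^{n}\boldsymbol{a}}\left(  s\right)  $, note that $\tau_{\boldsymbol{a}}\left(  z^{n}\right)  >0$ by the defining property of $\boldsymbol{A}_{++}^{inv}$ and that $z^{n}\boldsymbol{a}\in\boldsymbol{A}_{+}^{inv}\left(  C\right)  $ (again by the cocycle property), and only then apply the tilde trick to the pair $\left(  z^{n}\boldsymbol{a},s\right)  $, for which $\widetilde{s}$ genuinely has no zeros nor poles on $D_{-}^{2}$. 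With that correction the rest of your argument, including the final bootstrapping to $g\boldsymbol{a}\in\boldsymbol{A}_{++}^{inv}\left(  C\right)  $, goes through.
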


\begin{proof}
If $\boldsymbol{a}\in\boldsymbol{A}_{++}^{inv}\left(  C\right)  $, then
$z^{k}\boldsymbol{a}\in\boldsymbol{A}_{++}^{inv}\left(  C\right)  $\ for
$k\in\mathbb{Z}$ is valid. For $x\in D_{-}\cap\mathbb{R}$ Lemma \ref{l14}
implies $\tau_{\boldsymbol{a}}\left(  q_{x}\right)  =1+\varphi_{\boldsymbol{a}%
}^{\left(  0\right)  }\left(  x\right)  >0$, and the cocycle property of tau
functions shows%
\[
\tau_{q_{x}\boldsymbol{a}}\left(  z^{n}\right)  =\dfrac{\tau_{\boldsymbol{a}%
}\left(  q_{x}z^{n}\right)  }{\tau_{\boldsymbol{a}}\left(  q_{x}\right)
}=\dfrac{\tau_{\boldsymbol{a}}\left(  z^{n}\right)  \tau_{z^{n}\boldsymbol{a}%
}\left(  q_{x}\right)  }{\tau_{\boldsymbol{a}}\left(  q_{x}\right)  }>0
\]
due to $z^{n}\boldsymbol{a}\in\boldsymbol{A}_{++}^{inv}\left(  C\right)  $.
Since $q_{x}\boldsymbol{a}\in\boldsymbol{A}_{+}^{inv}\left(  C\right)  $
clearly holds, one has $q_{x}\boldsymbol{a}\in\boldsymbol{A}_{++}^{inv}\left(
C\right)  $. Similarly one can show $r_{\zeta}\boldsymbol{a}\in\boldsymbol{A}%
_{++}^{inv}\left(  C\right)  $ for $\zeta\in D_{-}\backslash\mathbb{R}$. Then,
similarly to the proof of Lemma \ref{l15} we have $\tau_{\boldsymbol{a}%
}\left(  r\right)  >0$ for any $r\in\Gamma_{\operatorname{real}}\left(
C\right)  $, which implies $r\boldsymbol{a}\in\boldsymbol{A}_{++}^{inv}\left(
C\right)  $. Then, applying Lemma \ref{l15} to $r\boldsymbol{a}$, we have
$\tau_{r\boldsymbol{a}}\left(  e^{h}\right)  >0$, and hence $\tau
_{\boldsymbol{a}}\left(  re^{h}\right)  >0$, which shows $re^{h}%
\boldsymbol{a}\in\boldsymbol{A}_{++}^{inv}\left(  C\right)  $.\bigskip
\end{proof}

The next issue is to find a sufficient condition for $\boldsymbol{a}%
\in\boldsymbol{A}^{inv}\left(  C\right)  $ to be an element of $\boldsymbol{A}%
_{++}^{inv}\left(  C\right)  $, which will be clarified in the next section.

\section{$m$-function and Weyl function}

In this section we relate $m$-functions to Weyl functions (see Appendix for
the definition)

First we present a lemma which connects the existence of a positive solution
to a Jacobi equation with the estimate of the spectrum of the Jacobi operator.
For $a_{n}>0$, $b_{n}\in\mathbb{R}$ let $q=\left\{  a_{n},b_{n}\right\}
_{n\in\mathbb{Z}}$ and%
\[
\left(  H_{q}u\right)  _{n}=a_{n+1}u_{n+1}+a_{n}u_{n-1}+b_{n}u_{n}\text{.}%
\]
Set%
\[
\ell_{0}\left(  \mathbb{Z}\right)  =\left\{  f\in\ell^{2}\left(
\mathbb{Z}\right)  \text{; }f\text{ has a finite support}\right\}  .
\]

\begin{lemma}
\label{l16}Suppose there exists $\lambda_{0}<\lambda_{1}$ and two solutions
$u_{n}$, $v_{n}$ to%
\[
\left\{
\begin{array}
[c]{ll}%
H_{q}u=\lambda_{1}u\text{, \ }u_{n}>0 & \text{for any }n\in\mathbb{Z}\\
H_{q}v=\lambda_{0}v\text{, \ }\left(  -1\right)  ^{n}v_{n}>0 & \text{for any
}n\in\mathbb{Z}%
\end{array}
\right.  \text{.}%
\]
Then it holds that%
\begin{equation}
\lambda_{0}\left(  f,f\right)  \leq\left(  H_{q}f,f\right)  \leq\lambda
_{1}\left(  f,f\right)  \text{ \ for any }f\in\ell_{0}\left(  \mathbb{Z}%
\right)  \text{.} \label{33}%
\end{equation}
Consequently $H_{q}$ can be uniquely extended to $\ell^{2}\left(
\mathbb{Z}\right)  $ as a bounded self-adjoint operator, and the boundaries
$\pm\infty$ are of limit point type.
\end{lemma}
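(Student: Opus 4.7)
The plan is to reduce the inequality to a sum-of-squares identity via the classical ground-state transformation (sometimes called the Jacobi analogue of the Allegretto--Piepenbrink argument). The existence of a strictly positive (resp.\ strictly sign-alternating) eigensolution lets one factor $H_q-\lambda_1$ (resp.\ $\lambda_0-H_q$) as a non-negative quadratic form in divided differences.

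For the upper bound, I would exploit $u>0$ to perform the substitution $f_n=u_n g_n$ (well defined because $u_n>0$ everywhere and $f$ has finite support). Using $H_qu=\lambda_1 u$, one rewrites
\[
 \bigl((H_q-\lambda_1)f\bigr)_n=a_{n+1}u_{n+1}(g_{n+1}-g_n)-a_n u_{n-1}(g_n-g_{n-1}).
\]
Taking the inner product with $f_n=u_n g_n$, pairing the two pieces after an index shift $n\mapsto n+1$ in the second sum, the cross terms telescope into a perfect square:
\[
\bigl\langle(H_q-\lambda_1)f,f\bigr\rangle=-\sum_{n\in\mathbb{Z}}a_{n+1}u_n u_{n+1}\,(g_{n+1}-g_n)^2\;\le\;0.
\]
All sums are finite because $f\in\ell_0(\mathbb{Z})$. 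This gives the right-hand inequality of \eqref{33}.

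For the lower bound, I would write $v_n=(-1)^n w_n$ with $w_n>0$, and set $f_n=w_n h_n$. The eigenvalue relation $H_q v=\lambda_0 v$ becomes $b_n w_n - a_{n+1}w_{n+1}-a_n w_{n-1}=\lambda_0 w_n$, so that after substitution
\[
\bigl((H_q-\lambda_0)f\bigr)_n=a_{n+1}w_{n+1}(h_{n+1}+h_n)+a_n w_{n-1}(h_{n-1}+h_n).
\]
Pairing with $f_n=w_n h_n$ and again shifting the index in one of the sums produces
\[
\bigl\langle(H_q-\lambda_0)f,f\bigr\rangle=\sum_{n\in\mathbb{Z}}a_{n+1}w_n w_{n+1}\,(h_{n+1}+h_n)^2\;\ge\;0,
\]
which yields the left-hand inequality.

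Finally, \eqref{33} shows that $H_q$ is bounded on the dense subspace $\ell_0(\mathbb{Z})$ by $\max(|\lambda_0|,|\lambda_1|)$, and it is symmetric there; hence it extends uniquely by continuity to a bounded self-adjoint operator on $\ell^2(\mathbb{Z})$, with spectrum in $[\lambda_0,\lambda_1]$. Essential self-adjointness on $\ell_0(\mathbb{Z})$ is equivalent to both endpoints $\pm\infty$ being of limit point type, completing the proof. The only step requiring any real care is organizing the two telescoping identities so that the boundary contributions indeed vanish for $f\in\ell_0(\mathbb{Z})$; once the substitution is made this is essentially bookkeeping, and I expect no serious obstacle.
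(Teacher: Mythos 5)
Your proof is correct and is essentially the same argument as the paper's: both use the positive (resp.\ sign-alternating) eigensolution to exhibit $\lambda_1-H_q$ (resp.\ $H_q-\lambda_0$) as a non-negative quadratic form. The paper packages the computation as an explicit factorization $\lambda_1-H_q=A^{\ast}A$ with $A$ a first-order difference operator built from the ratios $u_{n+1}/u_n$, which upon the substitution $f_n=u_ng_n$ gives exactly your telescoped sum $\sum_n a_{n+1}u_nu_{n+1}(g_{n+1}-g_n)^2$, so the two presentations coincide.
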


\begin{proof}
Define%
\[
\phi_{n-1}=\dfrac{u_{n}}{u_{n-1}}>0\text{ and }\rho_{o}\left(  n\right)
=-\sqrt{\dfrac{a_{n+1}}{\phi_{n}}}<0\text{, \ }\rho_{e}\left(  n\right)
=\sqrt{a_{n+1}\phi_{n}}>0\text{,}%
\]
and operators $A$, $A^{\ast}$ on $\ell_{0}^{2}\left(  \mathbb{Z}\right)  $ by%
\[
\left(  Af\right)  _{n}=\rho_{o}\left(  n\right)  f_{n+1}+\rho_{e}\left(
n\right)  f_{n}\text{, \ \ }\left(  A^{\ast}f\right)  _{n}=\rho_{o}\left(
n-1\right)  f_{n-1}+\rho_{e}\left(  n\right)  f_{n}\text{.}%
\]
Note that%
\[
\left(  Af,g\right)  =\left(  f,A^{\ast}g\right)  \text{ \ for any }f\text{,
}g\in\ell_{0}^{2}\left(  \mathbb{Z}\right)
\]
is valid. Then it holds that%
\[
\left\{
\begin{array}
[c]{l}%
\rho_{e}\left(  n\right)  \rho_{o}\left(  n\right)  =-a_{n+1}\\
\rho_{o}\left(  n-1\right)  ^{2}+\rho_{e}\left(  n\right)  ^{2}=\dfrac{a_{n}%
}{\phi_{n-1}}+a_{n+1}\phi_{n}=\lambda_{1}-b_{n}%
\end{array}
\right.  \text{,}%
\]
and%
\begin{align*}
&  \left(  A^{\ast}Af\right)  _{n}\\
&  =\rho_{o}\left(  n-1\right)  \left(  \rho_{o}\left(  n-1\right)  f_{n}%
+\rho_{e}\left(  n-1\right)  f_{n-1}\right)  +\rho_{e}\left(  n\right)
\left(  \rho_{o}\left(  n\right)  f_{n+1}+\rho_{e}\left(  n\right)
f_{n}\right) \\
&  =\rho_{e}\left(  n\right)  \rho_{o}\left(  n\right)  f_{n+1}+\left(
\rho_{o}\left(  n-1\right)  ^{2}+\rho_{e}\left(  n\right)  ^{2}\right)
f_{n}+\rho_{o}\left(  n-1\right)  \rho_{e}\left(  n-1\right)  f_{n-1}\\
&  =-a_{n}f_{n+1}-a_{n-1}f_{n-1}+\left(  \lambda_{1}-b_{n}\right)
f_{n}=\left(  \left(  \lambda_{1}-H_{q}\right)  f\right)  _{n}\text{,}%
\end{align*}
hence $\lambda_{1}-H_{q}=A^{\ast}A$ is valid on $\ell_{0}^{2}\left(
\mathbb{Z}\right)  $, which shows for any $f\in\ell_{0}^{2}\left(
\mathbb{Z}\right)  $%
\[
\left(  \left(  \lambda_{1}-H_{q}\right)  f,f\right)  =\left(  A^{\ast
}Af,f\right)  =\left(  Af,Af\right)  \geq0\text{.}%
\]

To show the converse inequality set%
\[
\phi_{n-1}=-\dfrac{v_{n}}{v_{n-1}}>0\text{, }\rho_{o}\left(  n\right)
=-\sqrt{\dfrac{a_{n+1}}{\phi_{n}}}<0\text{, \ }\rho_{e}\left(  n\right)
=\sqrt{a_{n+1}\phi_{n}}>0\text{,}%
\]
and operators $A$, $A^{\ast}$ on $\ell_{0}^{2}\left(  \mathbb{Z}\right)  $ by%
\[
\left(  Af\right)  _{n}=\rho_{o}\left(  n\right)  f_{n+1}-\rho_{e}\left(
n\right)  f_{n}\text{, \ \ }\left(  A^{\ast}f\right)  _{n}=\rho_{o}\left(
n-1\right)  f_{n-1}-\rho_{e}\left(  n\right)  f_{n}\text{.}%
\]
Then, $A^{\ast}A=H_{q}-\lambda_{0}$ holds, hence the converse inequality is
valid. The rest of the statement is clear.\bigskip
\end{proof}

Let $\lambda_{\pm}$ be%
\begin{equation}
\left\{
\begin{array}
[c]{l}%
\lambda_{+}=\inf\left\{  x+x^{-1}\text{; \ }x\in D_{-}\cap\mathbb{R}%
_{+}\right\}  >0\text{,}\\
\lambda_{-}=\sup\left\{  x+x^{-1}\text{; \ }x\in D_{-}\cap\mathbb{R}%
_{-}\right\}  <0\text{.}%
\end{array}
\right.  \label{33-1}%
\end{equation}
Now we have

\begin{proposition}
\label{p3}Suppose $\boldsymbol{a}\in\boldsymbol{A}_{++}^{inv}\left(  C\right)
$. Then, the $q=\left\{  a_{n},b_{n}\right\}  _{n\in\mathbb{Z}}$ associated
with $\boldsymbol{a}$ by (\ref{21}) provides a bounded Jacobi operator $H_{q}$
satisfying \textrm{sp}$H_{q}\subset\left[  \lambda_{-},\lambda_{+}\right]  $.
Moreover, its $m$-function $m_{\boldsymbol{a}}$ is given by the Weyl functions
$m_{\pm}$ of $H_{q}$ as%
\[
m_{\boldsymbol{a}}\left(  z\right)  =\left\{
\begin{array}
[c]{ll}%
z+z^{-1}+a_{1}^{2}m_{+}\left(  z+z^{-1}\right)  & \text{if }z\in D_{-}^{1}\\
-a_{0}^{2}m_{-}\left(  z+z^{-1}\right)  +b_{0} & \text{if }z\in D_{-}^{2}%
\end{array}
\right.  \text{.}%
\]

\end{proposition}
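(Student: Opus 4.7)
The proof has two independent pieces: establishing the spectral bound $\operatorname{sp}H_q\subset[\lambda_-,\lambda_+]$, and identifying $m_{\boldsymbol{a}}$ with the Weyl-function expression. My plan is to get the bound first by applying Lemma~\ref{l16} to explicit positive/sign-alternating solutions produced by the family $f_n(z)=z^{-n}(1+\varphi_{z^n\boldsymbol{a}}^{(0)}(z))=\boldsymbol{a}T(z^n\boldsymbol{a})^{-1}1$; Lemma~\ref{l4} already tells us these satisfy the Jacobi-type recurrence $L_{\boldsymbol{a}}f=(z+z^{-1})f$. For $x\in D_-\cap\mathbb{R}$, the cocycle identity of Lemma~\ref{l8}(iii) combined with Lemma~\ref{l9}(i) gives
\[
1+\varphi_{z^n\boldsymbol{a}}^{(0)}(x)=\tau_{z^n\boldsymbol{a}}(q_x)=\frac{\tau_{\boldsymbol{a}}(z^nq_x)}{\tau_{\boldsymbol{a}}(z^n)},
\]
which is a ratio of positive numbers by Proposition~\ref{p2} since $z^nq_x\in\Gamma_{\operatorname{real}}(C)$ and $\boldsymbol{a}\in\boldsymbol{A}_{++}^{inv}(C)$. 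The factor $z^{-n}$ then contributes either the sign $+$ (for $x>0$) or $(-1)^n$ (for $x<0$), so $f_n(x)>0$ for $x\in D_-\cap\mathbb{R}_+$ and $(-1)^nf_n(x)>0$ for $x\in D_-\cap\mathbb{R}_-$. The same positivity argument applied at $\zeta=0$ shows $d_n=1+\varphi_{z^n\boldsymbol{a}}^{(0)}(0)>0$, so the diagonal conjugation $H_q=\Lambda_{\boldsymbol{a}}^{-1}L_{\boldsymbol{a}}\Lambda_{\boldsymbol{a}}$ from Section~5 preserves signs, and $u_n:=d_n^{-1/2}f_n(x)$ gives the solutions required by Lemma~\ref{l16}.

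To finish this half, for each $\varepsilon>0$ I would pick $x_+\in D_-\cap\mathbb{R}_+$ with $x_++x_+^{-1}=\lambda_++\varepsilon$ and $x_-\in D_-\cap\mathbb{R}_-$ with $x_-+x_-^{-1}=\lambda_--\varepsilon$ (possible by the definition (\ref{33-1}) of $\lambda_\pm$), apply Lemma~\ref{l16} to conclude $\operatorname{sp}H_q\subset[\lambda_--\varepsilon,\lambda_++\varepsilon]$, and let $\varepsilon\to 0$. The self-adjointness and boundedness of $H_q$ are simultaneous consequences of Lemma~\ref{l16}.

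For the Weyl-function identification, I would first rewrite $m_{\boldsymbol{a}}$ using identity (\ref{17}) at $n=0$, which gives $f_{-1}=z+\varphi_{\boldsymbol{a}}^{(1)}+\varphi_{\widetilde{z^{-1}\boldsymbol{a}}}^{(-1)}(0)f_0$. Combining this with the definition (\ref{27}) of $m_{\boldsymbol{a}}$ and the relation $b_0=\varphi_{\widetilde{\boldsymbol{a}}}^{(-1)}(0)-\varphi_{\widetilde{z^{-1}\boldsymbol{a}}}^{(-1)}(0)$ yields the clean expression
\[
m_{\boldsymbol{a}}(z)=\frac{f_{-1}(z)}{f_0(z)}+b_0=a_0\,\frac{u_{-1}(z)}{u_0(z)}+b_0,
\]
where $u_n=d_n^{-1/2}f_n$ solves $H_qu=(z+z^{-1})u$ and $a_0=(d_{-1}/d_0)^{1/2}$. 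For $z\in D_-^1$, the spectral bound just proved puts $\lambda=z+z^{-1}$ in the resolvent set of $H_q$, and the leading behavior $f_n(z)\sim z^{-n}$ selects $(u_n)_{n\ge 0}$ as the unique $\ell^2$ solution at $+\infty$, i.e.\ the Weyl solution. The Appendix's definition of $m_+$ as the corresponding half-line Weyl quotient, combined with the Jacobi equation at $n=0$ which links $u_{-1}/u_0$ to $u_1/u_0$, then turns $a_0u_{-1}/u_0+b_0$ into $z+z^{-1}+a_1^2m_+(z+z^{-1})$. The analogous argument on $D_-^2$ gives $-a_0^2m_-(z+z^{-1})+b_0$, and both identities extend by analytic continuation.

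The main obstacle is the normalization bookkeeping: one must verify that the two linearly independent solutions of $H_qu=\lambda u$ are correctly separated by the asymptotics of $f_n$, i.e.\ that $u_n\in\ell^2$ at the appropriate end, rather than being proportional to the exponentially growing solution. This requires a uniform-in-$n$ control of $\varphi_{z^n\boldsymbol{a}}^{(0)}(z)$ so that the decay $|z^{-n}|$ dominates, and a careful match of the constants $a_0$, $a_1$ and the shift $b_0$ against whatever Dirichlet normalization is used in the Appendix for $m_\pm$. A secondary technical point is to justify analytically extending the Weyl identification from the subregion where $\lambda$ is clearly outside the spectrum to all of $D_-^1$ and $D_-^2$, which is immediate once the spectral inclusion and the meromorphy of both sides are in hand.
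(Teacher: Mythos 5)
Your first half (positivity of $g_n(x)=d_n^{-1/2}z^{-n}(1+\varphi_{z^n\boldsymbol{a}}^{(0)}(x))|_{z=x}$ for $x\in D_-\cap\mathbb{R}_+$, sign alternation for $x\in D_-\cap\mathbb{R}_-$, both via $\tau_{\boldsymbol{a}}(z^nq_x)/\tau_{\boldsymbol{a}}(z^n)>0$, then Lemma \ref{l16}) is exactly the paper's argument, and the algebraic skeleton of your second half — $m_{\boldsymbol{a}}=f_{-1}/f_0+b_0=a_0u_{-1}/u_0+b_0$, then converting $u_{-1}/u_0$ into $m_\pm$ via the Jacobi equation at $n=0$ — is also consistent with what the paper does. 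But there is a genuine gap at the step you yourself flag as "the main obstacle": you assert that the leading behavior $f_n(z)\sim z^{-n}$ selects the $\ell^2(\mathbb{Z}_+)$ Weyl solution for $z\in D_-^1$, and propose to justify this by a "uniform-in-$n$ control of $\varphi_{z^n\boldsymbol{a}}^{(0)}(z)$ so that the decay $|z^{-n}|$ dominates." No such uniform bound is available from anything established so far (the norms $\|T(z^n\boldsymbol{a})^{-1}\|$ are not controlled uniformly in $n$), and without it the argument fails: a generic solution of $H_qu=\lambda u$ with $\lambda$ in the resolvent set grows exponentially, so being the decaying solution is precisely the nontrivial claim. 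Also note that $\lambda=z+z^{-1}$ for $z\in D_-^1\cap\mathbb{C}_+$ need not lie outside $[\lambda_-,\lambda_+]$ (the image of $D_-^1$ under $\phi$ covers a full complex neighborhood of $\mathbb{C}\setminus[\lambda_-,\lambda_+]$ only off the real axis), so one cannot even invoke "unique $\ell^2$ solution at $+\infty$" without first knowing $\operatorname{Im}\lambda\neq0$ or the limit-point dichotomy; the relevant statement is the limit-point property at $\pm\infty$ together with an actual proof of square-summability.

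The paper closes this gap by a completely different mechanism: it writes the consecutive ratio as $g_{k+1}(\zeta)/g_k(\zeta)=c_kn_{z^k\boldsymbol{a}}(\zeta)$ (formula (\ref{34})), feeds this into the three-term recurrence to get a Riccati-type recursion for $\operatorname{Im}\bigl(c_kn_{z^k\boldsymbol{a}}(\zeta)\bigr)$, and uses the sign information $\operatorname{Im}c_kn_{z^k\boldsymbol{a}}(\zeta)<0$ on $D_-^1\cap\mathbb{C}_+$ (a consequence of Lemma \ref{l14}(ii), i.e.\ of the Herglotz property of the $m$-functions of all the shifted symbols $z^k\boldsymbol{a}$) to produce the telescoping identity
\begin{equation*}
\sum_{0\leq n\leq N}\dfrac{\left\vert g_{n+1}\left(  \zeta\right)\right\vert ^{2}}{\left\vert g_{0}\left(  \zeta\right)  \right\vert ^{2}}=B\left(  1-\prod_{1\leq k\leq N+1}\left(  1+A_{k}\right)^{-1}\right)  \leq B\text{,}
\end{equation*}
which gives $g_n(\zeta)\in\ell^2(\mathbb{Z}_+)$ for $\zeta\in D_-^1\cap\mathbb{C}_+$ with no asymptotic control on $\varphi_{z^n\boldsymbol{a}}^{(0)}$ at all. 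This is the one essential idea your proposal is missing; everything else (the normalization bookkeeping with $a_0,a_1,b_0$, and the extension by analyticity) you have essentially right and matches the paper.
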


\begin{proof}
Lemma \ref{l11} says%
\[
g_{n}\left(  \zeta\right)  =\dfrac{\boldsymbol{a}T\left(  z^{n}\boldsymbol{a}%
\right)  ^{-1}1\left(  \zeta\right)  }{\sqrt{1+\varphi_{z^{n}\boldsymbol{a}%
}^{\left(  0\right)  }(0)}}=\zeta^{-n}\dfrac{1+\varphi_{z^{n}\boldsymbol{a}%
}^{\left(  0\right)  }\left(  \zeta\right)  }{\sqrt{1+\varphi_{z^{n}%
\boldsymbol{a}}^{\left(  0\right)  }(0)}}%
\]
satisfies $H_{q}g=\left(  \zeta+\zeta^{-1}\right)  g$ for $\zeta\in D_{-}$.
Since $g_{n}\left(  \zeta\right)  >0$ for any $n\in\mathbb{Z}$ if $\zeta\in
D_{-}\cap\mathbb{R}_{+}$ and $\left(  -1\right)  ^{n}g_{n}\left(
\zeta\right)  >0$ for any $n\in\mathbb{Z}$ if $\zeta\in D_{-}\cap
\mathbb{R}_{-}$ hold, from Lemma \ref{l16} one has the property \textrm{sp}%
$H_{q}\subset\left[  \lambda_{-},\lambda_{+}\right]  $. The boundedness of
$H_{q}$ implies that the boundaries $\pm\infty$ are of limit point type.

We show $g_{n}\left(  \zeta\right)  \in\ell^{2}\left(  \mathbb{Z}_{+}\right)
$ for $\zeta\in D_{-}^{1}\cap\mathbb{C}_{+}$. Observe%
\[
\dfrac{g_{k+1}\left(  \zeta\right)  }{g_{k}\left(  \zeta\right)  }%
=\dfrac{\sqrt{\tau_{z^{k}\boldsymbol{a}}(z^{-1})}}{\sqrt{\tau_{z^{k+1}%
\boldsymbol{a}}(z^{-1})}}\dfrac{\tau_{z^{k+1}\boldsymbol{a}}(q_{\zeta})}%
{\zeta\tau_{z^{k}\boldsymbol{a}}(q_{\zeta})}=\dfrac{\sqrt{\tau_{z^{k}%
\boldsymbol{a}}(z^{-1})}}{\sqrt{\tau_{z^{k+1}\boldsymbol{a}}(z^{-1})}}%
\dfrac{\tau_{z^{k+1}\boldsymbol{a}}(q_{\zeta})}{\zeta\tau_{z^{kn}%
\boldsymbol{a}}(q_{\zeta})}\text{.}%
\]
Since%
\[
\tau_{z^{k+1}\boldsymbol{a}}(q_{\zeta})=\tau_{z^{-1}\widetilde{z^{k}%
\boldsymbol{a}}}(\widetilde{q}_{\zeta})=\dfrac{\tau_{\widetilde{z^{k}%
\boldsymbol{a}}}(z^{-1}\widetilde{q}_{\zeta})}{\tau_{\widetilde{z^{k}%
\boldsymbol{a}}}(z^{-1})}=\dfrac{\tau_{\widetilde{z^{k}\boldsymbol{a}}%
}(q_{\zeta^{-1}})}{\tau_{z^{k}\boldsymbol{a}}(z)}\text{ \ (}z^{-1}%
\widetilde{q}_{\zeta}=-\zeta q_{\zeta^{-1}}\text{)}%
\]
holds, one has%
\begin{equation}
\dfrac{g_{k+1}\left(  \zeta\right)  }{g_{k}\left(  \zeta\right)  }%
=\dfrac{\sqrt{\tau_{z^{k}\boldsymbol{a}}(z^{-1})}}{\sqrt{\tau_{z^{k+1}%
\boldsymbol{a}}(z^{-1})}}\dfrac{1}{\tau_{z^{k}\boldsymbol{a}}(z)}\dfrac
{\tau_{\widetilde{z^{k}\boldsymbol{a}}}(q_{\zeta^{-1}})}{\zeta\tau
_{z^{k}\boldsymbol{a}}(q_{\zeta})}=c_{k}n_{z^{k}\boldsymbol{a}}\left(
\zeta\right)  \label{34}%
\end{equation}
with $c_{k}=\sqrt{\tau_{\boldsymbol{a}}(z^{k-1})}/\sqrt{\tau_{\boldsymbol{a}%
}(z^{k+1})}$. Since $g_{k}\left(  \zeta\right)  $ satisfies (\ref{22}),
(\ref{34}) implies%
\begin{align*}
c_{k}n_{z^{k}\boldsymbol{a}}\left(  \zeta\right)   &  =a_{k+1}^{-1}%
\dfrac{\left(  \zeta+\zeta^{-1}\right)  g_{k}\left(  \zeta\right)
-a_{k}g_{k-1}\left(  \zeta\right)  -b_{k}g_{k}\left(  \zeta\right)  }%
{g_{k}\left(  \zeta\right)  }\\
&  =a_{k+1}^{-1}\left(  \zeta+\zeta^{-1}-b_{k}\right)  -a_{k}a_{k+1}%
^{-1}\left(  c_{k-1}n_{z^{k-1}\boldsymbol{a}}\left(  \zeta\right)  \right)
^{-1}\text{.}%
\end{align*}
Taking the imaginary parts, one has%
\[
\operatorname{Im}c_{k}n_{z^{k}\boldsymbol{a}}\left(  \zeta\right)
=a_{k+1}^{-1}\operatorname{Im}\left(  \zeta+\zeta^{-1}\right)  +a_{k}%
a_{k+1}^{-1}\dfrac{\operatorname{Im}c_{k-1}n_{z^{k-1}\boldsymbol{a}}\left(
\zeta\right)  }{\left\vert c_{k-1}n_{z^{k-1}\boldsymbol{a}}\left(
\zeta\right)  \right\vert ^{2}}\text{,}%
\]
hence%
\[
1-\dfrac{a_{k+1}^{-1}\operatorname{Im}\left(  \zeta+\zeta^{-1}\right)
}{\operatorname{Im}c_{k}n_{z^{k}\boldsymbol{a}}\left(  \zeta\right)  }%
=a_{k}a_{k+1}^{-1}\dfrac{\operatorname{Im}c_{k-1}n_{z^{k-1}\boldsymbol{a}%
}\left(  \zeta\right)  }{\operatorname{Im}c_{k}n_{z^{k}\boldsymbol{a}}\left(
\zeta\right)  }\left\vert c_{k-1}n_{z^{k-1}\boldsymbol{a}}\left(
\zeta\right)  \right\vert ^{-2}%
\]
follows. Note here $\operatorname{Im}c_{k}n_{z^{k}\boldsymbol{a}}\left(
\zeta\right)  <0$ on $D_{-}^{1}\cap\mathbb{C}_{+}$ due to (ii) of Lemma
\ref{l14}. Since one has%
\[
\dfrac{\left\vert g_{n+1}\left(  \zeta\right)  \right\vert ^{2}}{\left\vert
g_{0}\left(  \zeta\right)  \right\vert ^{2}}=\dfrac{a_{1}\operatorname{Im}%
c_{0}n_{\boldsymbol{a}}}{a_{n+2}\operatorname{Im}c_{n+1}n_{z^{n+1}%
\boldsymbol{a}}}\prod\nolimits_{1\leq k\leq n+1}\left(  1-\dfrac
{\operatorname{Im}\left(  \zeta+\zeta^{-1}\right)  }{a_{k+1}\operatorname{Im}%
c_{k}n_{z^{k}\boldsymbol{a}}}\right)  ^{-1}\text{,}%
\]
setting%
\[
A_{k}=\operatorname{Im}\left(  \zeta+\zeta^{-1}\right)  \left(  -a_{k+1}%
\operatorname{Im}c_{k}n_{z^{k}\boldsymbol{a}}\right)  ^{-1}\text{,
\ }B=\left(  -a_{1}\operatorname{Im}c_{0}n_{\boldsymbol{a}}\right)
/\operatorname{Im}\left(  \zeta+\zeta^{-1}\right)  \text{,}%
\]
we have%
\begin{align*}
\sum\nolimits_{0\leq n\leq N}\dfrac{\left\vert g_{n+1}\left(  \zeta\right)
\right\vert ^{2}}{\left\vert g_{0}\left(  \zeta\right)  \right\vert ^{2}}  &
=B\sum\nolimits_{0\leq n\leq N}A_{n+1}\prod\nolimits_{1\leq k\leq n+1}\left(
1+A_{k}\right)  ^{-1}\\
&  =B\left(  1-\prod\nolimits_{1\leq k\leq N+1}\left(  1+A_{k}\right)
^{-1}\right)  \leq B\text{,}%
\end{align*}
which implies $g_{n}\left(  \zeta\right)  \in\ell^{2}\left(  \mathbb{Z}%
_{+}\right)  $ for $\zeta\in D_{-}^{1}\cap\mathbb{C}_{+}$. Similarly we have
$g_{n}\left(  \zeta\right)  \in\ell^{2}\left(  \mathbb{Z}_{-}\right)  $ for
$\zeta\in D_{-}^{2}\cap\mathbb{C}_{+}$. Then, the definition of Weyl functions
yields%
\[
m_{+}\left(  \zeta+\zeta^{-1}\right)  =-\dfrac{g_{1}\left(  \zeta\right)
}{a_{1}g_{0}\left(  \zeta\right)  }=-\dfrac{c_{0}}{a_{1}}n_{\boldsymbol{a}%
}\left(  \zeta\right)  =-a_{1}^{-2}\left(  \zeta+\zeta^{-1}-m_{\boldsymbol{a}%
}\left(  \zeta\right)  \right)  \text{.}%
\]
For $m_{-}$ one has%
\[
m_{-}\left(  \zeta+\zeta^{-1}\right)  =-\dfrac{g_{-1}\left(  \zeta\right)
}{a_{0}g_{0}\left(  \zeta\right)  }=\dfrac{-1}{a_{0}c_{-1}n_{z^{-1}%
\boldsymbol{a}}\left(  \zeta\right)  }=\dfrac{-1}{a_{0}\sqrt{\tau
_{\boldsymbol{a}}(z^{-2})}}\dfrac{\zeta\tau_{\boldsymbol{a}}\left(
z^{-1}q_{\zeta}\right)  }{\tau_{\boldsymbol{a}}\left(  q_{\zeta}\right)
}\text{.}%
\]
Since Lemma \ref{l9} implies%
\[
\zeta\tau_{\boldsymbol{a}}\left(  z^{-1}q_{\zeta}\right)  =\tau
_{\boldsymbol{a}}\left(  z^{-1}\right)  \tau_{\boldsymbol{a}}\left(  q_{\zeta
}\right)  \left(  m_{\boldsymbol{a}}\left(  \zeta\right)  -m_{\boldsymbol{a}%
}\left(  0\right)  \right)  \text{,}%
\]
one has%
\[
m_{\boldsymbol{a}}\left(  z\right)  =-a_{0}^{2}m_{-}\left(  z+z^{-1}\right)
+m_{\boldsymbol{a}}\left(  0\right)  \text{.}%
\]
To identify $m_{\boldsymbol{a}}\left(  0\right)  $ with $b_{0}$ we start from
the definition (\ref{20}) of $b_{0}$:%
\[
b_{0}=\varphi_{\widetilde{\boldsymbol{a}}}^{\left(  -1\right)  }\left(
0\right)  -\varphi_{\widetilde{z^{-1}\boldsymbol{a}}}^{\left(  -1\right)
}\left(  0\right)  \text{.}%
\]
To change the symbol $z^{-1}\boldsymbol{a}$ to $\boldsymbol{a}$ we use tau
functions. We have%
\begin{align*}
\varphi_{\widetilde{z^{-1}\boldsymbol{a}}}^{\left(  -1\right)  }\left(
\zeta\right)   &  =\zeta^{-1}\varphi_{z^{-1}\boldsymbol{a}}^{\left(  0\right)
}\left(  \zeta^{-1}\right)  =\zeta^{-1}\left(  \tau_{z^{-1}\boldsymbol{a}%
}\left(  q_{\zeta^{-1}}\right)  -1\right)  =\dfrac{\tau_{\boldsymbol{a}%
}\left(  z^{-1}q_{\zeta^{-1}}\right)  }{\zeta\tau_{\boldsymbol{a}}\left(
z^{-1}\right)  }-\zeta^{-1}\\
&  =\left(  1+\varphi_{\boldsymbol{a}}^{\left(  0\right)  }\left(  \zeta
^{-1}\right)  \right)  \left(  m_{\boldsymbol{a}}\left(  \zeta^{-1}\right)
-m_{\boldsymbol{a}}\left(  0\right)  \right)  -\zeta^{-1}\\
&  =-m_{\boldsymbol{a}}\left(  0\right)  +\varphi_{\widetilde{\boldsymbol{a}}%
}^{\left(  -1\right)  }\left(  0\right)  +O\left(  \zeta\right)  \text{ \ \ as
\ }\zeta\rightarrow0\text{,}%
\end{align*}
which completes the proof.\bigskip
\end{proof}

This proposition implies that for $\boldsymbol{a}\in\boldsymbol{A}_{++}%
^{inv}\left(  C\right)  $ the associated $m$-function $m_{\boldsymbol{a}}$ is
automatically extended analytically to $\mathbb{C}\backslash\Sigma
_{\lambda_{0}}$ with $\lambda_{0}=\max\left\{  \lambda_{+},-\lambda
_{-}\right\}  $.

\section{Proof of Theorem by identifying $\boldsymbol{A}_{++}^{inv}$ with
$Q_{\lambda_{0}}$}

Although we have constructed a Toda flow on $\boldsymbol{A}_{++}^{inv}$ by
Propositions \ref{p1}, \ref{p2}, we have not been able to specify
$\boldsymbol{A}_{++}^{inv}\left(  C\right)  $ in a concrete terminology. In
this section we identify $\boldsymbol{A}_{++}^{inv}\left(  C\right)  $ with
$Q_{\lambda_{0}}$ ($\lambda_{0}>2$), which provides automatically a proof of Theorem.

Proposition \ref{p3} shows $\boldsymbol{A}_{++}^{inv}\left(  C\right)  \subset
Q_{\lambda_{0}}$ in a certain sense. Our next task here is to verify the
converse statement. For $q=\left\{  a_{n},b_{n}\right\}  _{n\in\mathbb{Z}}\in
Q_{\lambda_{0}}$ let $m_{\pm}$ be the Weyl functions for the Jacobi operator
$H_{q}$ and set%
\[
m\left(  z\right)  =\left\{
\begin{array}
[c]{ll}%
z+z^{-1}+a_{1}^{2}m_{+}\left(  z+z^{-1}\right)  & \text{if }z\in
\Sigma_{\lambda_{0}}\text{ and }\left\vert z\right\vert >1\\
-a_{0}^{2}m_{-}\left(  z+z^{-1}\right)  +b_{0} & \text{if }z\in\Sigma
_{\lambda_{0}}\text{ and\ }\left\vert z\right\vert <1
\end{array}
\right.
\]
with%
\[
\Sigma_{\lambda_{0}}=\left\{  \left\vert z\right\vert =1\right\}  \cup\left[
-\ell,-\ell^{-1}\right]  \cup\left[  \ell^{-1},\ell\right]  \text{ }\left(
\ell=\left(  \lambda_{0}+\sqrt{\lambda_{0}^{2}-4}\right)  /2\right)  .
\]
Then, $m$ is an element of%
\begin{equation}
M_{\lambda_{0}}=\left\{
\begin{array}
[c]{l}%
m\text{; \ }m\text{ is analytic on }\mathbb{C}\backslash\Sigma_{\lambda_{0}%
}\text{ satisfying\ }m=\overline{m}\text{ and}\\
\text{(i) \ }m(z)=z+O\left(  z^{-1}\right)  \text{ \ as }z\rightarrow\infty\\
\text{(ii) \ }\operatorname{Im}m\left(  z\right)  >0\text{ \ on }%
\mathbb{C}_{+}\backslash\Sigma_{\lambda_{0}}\\
\text{(iii)\ }m\left(  z\right)  -m\left(  z^{-1}\right)  \neq0\text{ \ on
}\mathbb{C}_{+}\backslash\Sigma_{\lambda_{0}}\\
\text{(iv) }m\left(  z\right)  \text{ is not a rational function of }z+z^{-1}%
\end{array}
\right\}  \text{,} \label{35}%
\end{equation}
which obeys from Lemma \ref{la1} in Appendix.

$m\in M_{\lambda_{0}}$ has the following expression.

\begin{lemma}
\label{l17}For $m\in M_{\lambda_{0}}$ there exist $a_{0}^{2}>0$, $a_{1}^{2}%
>0$, $b_{0}\in\mathbb{R}$ and probability measures $\sigma_{\pm}$ on $\left[
-\lambda_{0},\lambda_{0}\right]  $ such that%
\[
m\left(  z\right)  =\left\{
\begin{array}
[c]{ll}%
z+z^{-1}+a_{1}^{2}%
{\displaystyle\int_{-\lambda_{0}}^{\lambda_{0}}}
\dfrac{\sigma_{+}\left(  d\lambda\right)  }{\lambda-\left(  z+z^{-1}\right)  }
& \text{if \ }\left\vert z\right\vert >1\\
-a_{0}^{2}%
{\displaystyle\int_{-\lambda_{0}}^{\lambda_{0}}}
\dfrac{\sigma_{-}\left(  d\lambda\right)  }{\lambda-\left(  z+z^{-1}\right)
}+b_{0} & \text{if \ }\left\vert z\right\vert <1
\end{array}
\right.  \text{.}%
\]

\end{lemma}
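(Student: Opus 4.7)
The plan is to transport $m$ to the $\zeta$-plane via $\zeta=z+z^{-1}$ on each of the two components of $\mathbb{C}\backslash\Sigma_{\lambda_{0}}$ separately and apply the classical Herglotz representation. For the outer component, the map $z\mapsto\zeta$ is a biholomorphism from $\{|z|>1\}$ onto $\mathbb{C}\backslash[-2,2]$ that sends $\Sigma_{\lambda_{0}}\cap\{|z|>1\}=(1,\ell]\cup[-\ell,-1)$ onto $(2,\lambda_{0}]\cup[-\lambda_{0},-2)$. Letting $z(\zeta)$ denote the root of $z+z^{-1}=\zeta$ with $|z(\zeta)|>1$, the function $F_{+}(\zeta):=m(z(\zeta))$ is analytic on $\mathbb{C}\backslash[-\lambda_{0},\lambda_{0}]$, real on $\mathbb{R}\backslash[-\lambda_{0},\lambda_{0}]$ by $m=\overline{m}$, and satisfies $\operatorname{Im}F_{+}>0$ on $\mathbb{C}_{+}$ because the factor $(1-|z|^{-2})>0$ makes $\operatorname{Im}\zeta$ share the sign of $\operatorname{Im}z$ on $\{|z|>1\}$, at which point (ii) applies. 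Together with $z(\zeta)=\zeta-\zeta^{-1}+O(\zeta^{-3})$, condition (i) yields $F_{+}(\zeta)=\zeta+O(\zeta^{-1})$ at infinity, so the Herglotz representation provides $\alpha\in\mathbb{R}$, $\beta\geq 0$, and a finite nonnegative measure $\mu$ on $[-\lambda_{0},\lambda_{0}]$ with
\[
F_{+}(\zeta)=\alpha+\beta\zeta+\int_{-\lambda_{0}}^{\lambda_{0}}\dfrac{d\mu(\lambda)}{\lambda-\zeta},
\]
and matching the asymptotic forces $\beta=1$, $\alpha=0$. Taking $a_{1}^{2}:=\mu([-\lambda_{0},\lambda_{0}])$ and $\sigma_{+}:=\mu/a_{1}^{2}$ gives the first line of the formula.

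For the inner component set $w(\zeta):=z(\zeta)^{-1}$, so $|w(\zeta)|<1$ and $w(\zeta)=\zeta^{-1}+O(\zeta^{-3})$ at infinity, and let $F_{-}(\zeta):=m(w(\zeta))$. Analyticity on $\mathbb{C}\backslash[-\lambda_{0},\lambda_{0}]$ and reality off the spectrum follow as before, but now $\operatorname{Im}w$ and $\operatorname{Im}\zeta$ have opposite signs, so (ii) gives $\operatorname{Im}F_{-}<0$ on $\mathbb{C}_{+}$, i.e.\ $b_{0}-F_{-}$ is Herglotz, where $b_{0}:=m(0)$ (note $0\notin\Sigma_{\lambda_{0}}$, so $m$ is analytic at the origin). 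The expansion $F_{-}(\zeta)=b_{0}+m^{\prime}(0)\zeta^{-1}+O(\zeta^{-2})$, combined with the Herglotz representation of $b_{0}-F_{-}$, produces a finite nonnegative measure $\nu$ on $[-\lambda_{0},\lambda_{0}]$ of total mass $m^{\prime}(0)$ such that $F_{-}(\zeta)=b_{0}-\int d\nu(\lambda)/(\lambda-\zeta)$; setting $a_{0}^{2}:=m^{\prime}(0)$ and $\sigma_{-}:=\nu/a_{0}^{2}$ gives the second line.

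The main subtlety is the strict positivity $a_{0}^{2},a_{1}^{2}>0$, i.e., that neither $\mu$ nor $\nu$ is the zero measure. Vanishing of $a_{1}^{2}$ would force $F_{+}\equiv\zeta$, so $m$ would reduce to $z+z^{-1}$ on the outer component, a rational function of $z+z^{-1}$; similarly $a_{0}^{2}=0$ would make $m$ constant on the inner component, again rational in $\zeta$. Either degeneration contradicts condition (iv), so both positivities hold, $\sigma_{\pm}$ have total mass one, and the proof is complete.
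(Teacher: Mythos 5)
Your proof is correct and follows essentially the same route as the paper: transport $m$ to the $\zeta=z+z^{-1}$ plane on each component, apply the Herglotz representation with the measure forced onto $[-\lambda_0,\lambda_0]$ by reality, and pin down the linear and constant terms from the asymptotics at $\infty$ (outer component) and at $z=0$ (inner component). The only cosmetic difference is in ruling out $a_0^2=0$: you invoke condition (iv) (a constant is rational in $z+z^{-1}$), whereas the paper uses the strict positivity in condition (ii); both are valid, and the paper's own use of (iv) elsewhere confirms it is meant component-wise.
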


\begin{proof}
$\phi\left(  z\right)  =z+z^{-1}$ yields a conformal map:%
\[
\phi:\left\{
\begin{array}
[c]{c}%
\mathbb{C}_{+}\cap\left\{  \left\vert z\right\vert >1\right\}  \rightarrow
\mathbb{C}_{+}\\
\mathbb{C}_{+}\cap\left\{  \left\vert z\right\vert <1\right\}  \rightarrow
\mathbb{C}_{-}%
\end{array}
\right.  \text{.}%
\]
Analytic functions $n_{\pm}\left(  z\right)  =\pm m\left(  \phi^{-1}\left(
z\right)  \right)  $ on $\mathbb{C}_{\pm}$ satisfy $\pm\operatorname{Im}%
n_{\pm}\left(  z\right)  >0$, hence they have Herglotz representations, namely
there exist $\alpha_{\pm}\in\mathbb{R}$, $\beta_{\pm}\geq0$ and measures
$\sigma_{\pm}$ on $\mathbb{R}$ such that%
\[
n_{\pm}\left(  z\right)  =\alpha_{\pm}+\beta_{\pm}z+%
{\displaystyle\int_{-\infty}^{\infty}}
\left(  \dfrac{1}{\lambda-z}-\dfrac{\lambda}{\lambda^{2}+1}\right)
\sigma_{\pm}\left(  d\lambda\right)  \text{,}%
\]
hence%
\[
m\left(  z\right)  =\left\{
\begin{array}
[c]{c}%
\alpha_{+}+\beta_{+}\phi\left(  z\right)  +%
{\displaystyle\int_{-\infty}^{\infty}}
\left(  \dfrac{1}{\lambda-\phi\left(  z\right)  }-\dfrac{\lambda}{\lambda
^{2}+1}\right)  \sigma_{+}\left(  d\lambda\right)  \text{ \ if }\left\vert
z\right\vert >1\\
-\alpha_{-}-\beta_{-}\phi\left(  z\right)  -%
{\displaystyle\int_{-\infty}^{\infty}}
\left(  \dfrac{1}{\lambda-\phi\left(  z\right)  }-\dfrac{\lambda}{\lambda
^{2}+1}\right)  \sigma_{-}\left(  d\lambda\right)  \text{ \ if }\left\vert
z\right\vert <1
\end{array}
\right.  \text{.}%
\]
The property $m=\overline{m}$ of (\ref{35}) imply the measures $\sigma_{\pm}$
should be supported on $\left[  -\lambda_{0},\lambda_{0}\right]  $. Moreover,
the property (i) shows $\beta_{+}=1$ and%
\[
m\left(  z\right)  =\phi\left(  z\right)  +%
{\displaystyle\int_{-\lambda_{0}}^{\lambda_{0}}}
\dfrac{1}{\lambda-\phi\left(  z\right)  }\sigma_{+}\left(  d\lambda\right)
\text{ \ if }\left\vert z\right\vert >1\text{.}%
\]
The property (iv) implies $m\left(  z\right)  =\phi\left(  z\right)  $ not
identically, hence setting $a_{1}^{2}\equiv\sigma_{+}\left(  \left[
-\lambda_{0},\lambda_{0}\right]  \right)  >0$ and normalizing $\sigma_{+}$,
one has the first expression of the lemma. Since we know $m\left(  0\right)  $
is finite, $m(z)$ on $\left\{  \left\vert z\right\vert <1\right\}  $ takes a
form:%
\[
m\left(  z\right)  =-%
{\displaystyle\int_{-\lambda_{0}}^{\lambda_{0}}}
\dfrac{1}{\lambda-\phi\left(  z\right)  }\sigma_{-}\left(  d\lambda\right)
+b_{0}%
\]
with $b_{0}\in\mathbb{R}$. The strict positivity of $\operatorname{Im}m\left(
z\right)  >0$ on $\mathbb{C}_{+}\cap\left\{  \left\vert z\right\vert
<1\right\}  $ implies non-vanishing of $\sigma_{-}$. Then, setting $a_{0}%
^{2}=\sigma_{-}\left(  \left[  -\lambda_{0},\lambda_{0}\right]  \right)  >0$,
one has the second expression of the lemma.\bigskip
\end{proof}

For $m\in M_{\lambda_{0}}$ define a symbol%
\begin{equation}
\boldsymbol{m}\left(  z\right)  =\left(  \dfrac{zm(z)-1}{z^{2}-1},z^{2}%
\dfrac{z-m\left(  z\right)  }{z^{2}-1}\right)  \text{.} \label{37}%
\end{equation}
We fix any bounded domain $D_{+}$ containing $\Sigma_{\lambda_{0}}$ and
satisfying $D_{+}\ni z\rightarrow z^{-1}$, $\overline{z}\in D_{+}$ and having
smooth boundaries. Then, this $\boldsymbol{m}$ satisfies the conditions (i)
and (ii) of $\boldsymbol{M}\left(  C\right)  $ (see (\ref{18-1})). The
condition (iii) of $\boldsymbol{M}\left(  C\right)  $ is certified by
computing%
\[
m_{1}\left(  z\right)  \widetilde{m}_{1}\left(  z\right)  -m_{2}\left(
z\right)  \widetilde{m}_{2}\left(  z\right)  =\dfrac{m\left(  z\right)
-m\left(  z^{-1}\right)  }{z-z^{-1}}\neq0\text{,}%
\]
hence $\boldsymbol{m}\in\boldsymbol{M}\left(  C\right)  $ and $\boldsymbol{m}%
\in\boldsymbol{A}^{inv}\left(  C\right)  $. Moreover, Lemma \ref{l6} yields%
\[
\left\{
\begin{array}
[c]{l}%
1+\varphi_{\boldsymbol{m}}^{\left(  0\right)  }\left(  z\right)  =1\\
z^{-1}+\varphi_{\boldsymbol{m}}^{\left(  -1\right)  }\left(  z\right)
=\phi\left(  z\right)  -m(z)
\end{array}
\right.  \Longrightarrow m_{\boldsymbol{m}}\left(  z\right)  =m\left(
z\right)  \text{.}%
\]
Recall $D_{-}^{1}=D_{-}\cap\left\{  \left\vert z\right\vert >1\right\}  $,
$D_{-}^{2}=D_{-}\cap\left\{  \left\vert z\right\vert <1\right\}  $.

\begin{lemma}
\label{l18}If $\boldsymbol{a}\in\boldsymbol{A}^{inv}\left(  C\right)  $
satisfies $\boldsymbol{a}=\overline{\boldsymbol{a}}$ and%
\[
1+\varphi_{\boldsymbol{a}}^{\left(  0\right)  }\left(  0\right)
>0,\ \ \ m_{\boldsymbol{a}}\in M_{\lambda_{0}}\text{,}%
\]
then it holds that\newline(i) \ $\tau_{\boldsymbol{a}}\left(  z\right)
\tau_{\boldsymbol{a}}\left(  z^{-1}\right)  =\lim_{\zeta\rightarrow\infty
}\zeta\left(  \phi\left(  \zeta\right)  -m_{\boldsymbol{a}}\left(
\zeta\right)  \right)  >0$,\newline(ii) $\ \tau_{\boldsymbol{a}}\left(
q_{x}\right)  >0$ on $D_{-}\cap\mathbb{R}$ and $\tau_{\boldsymbol{a}}\left(
r_{\zeta}\right)  >0$ on $D_{-}$,\newline(iii) $m_{\widetilde{\boldsymbol{a}}%
}$, $m_{q_{x}\boldsymbol{a}}$, $m_{r_{\zeta}\boldsymbol{a}}\in M_{\lambda_{0}%
}$ for any $x\in D_{-}\cap\mathbb{R}$ and $\zeta\in D_{-}\backslash\mathbb{R}$.
\end{lemma}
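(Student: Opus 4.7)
The plan is to exploit the explicit identifications (\ref{28}), (\ref{28-1}) linking $n_{\boldsymbol{a}}$ with $m_{\boldsymbol{a}}$, the tau-function formulae of Lemmas \ref{l9} and \ref{l11}, and the integral representation of $m\in M_{\lambda_0}$ from Lemma \ref{l17}. Analyticity of $m_{\boldsymbol{a}}$ on $\mathbb{C}\setminus\Sigma_{\lambda_0}$ (hence on $D_-$) will force non-vanishing of $1+\varphi_{\boldsymbol{a}}^{(0)}$ on $D_-$, and the integral representation will supply the required signs.

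For (i), I expand $n_{\boldsymbol{a}}(\zeta)=(\zeta^{-1}+\varphi_{\boldsymbol{a}}^{(-1)}(\zeta))/(1+\varphi_{\boldsymbol{a}}^{(0)}(\zeta))$ at infinity using Lemma \ref{l2}(i) to obtain $\lim_{\zeta\to\infty}\zeta\,n_{\boldsymbol{a}}(\zeta)=1+\varphi_{\widetilde{\boldsymbol{a}}}^{(0)}(0)=\tau_{\boldsymbol{a}}(z)$. Combining with (\ref{28}) gives $\tau_{\boldsymbol{a}}(z)\tau_{\boldsymbol{a}}(z^{-1})=\lim_{\zeta\to\infty}\zeta(\phi(\zeta)-m_{\boldsymbol{a}}(\zeta))$; positivity is then read off the $|z|>1$ branch of Lemma \ref{l17}, the limit being the total mass $a_1^2=\sigma_+([-\lambda_0,\lambda_0])>0$.

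For (ii), I first establish that $1+\varphi_{\boldsymbol{a}}^{(0)}(\zeta)\neq 0$ on all of $D_-$: if it vanished at some $\zeta_0$, Lemma \ref{l3} would force $\zeta_0^{-1}+\varphi_{\boldsymbol{a}}^{(-1)}(\zeta_0)\neq 0$, so $n_{\boldsymbol{a}}$ and, by (\ref{28}), $m_{\boldsymbol{a}}$ would have a pole at $\zeta_0$, contradicting analyticity of $m_{\boldsymbol{a}}$ on $D_-\subset\mathbb{C}\setminus\Sigma_{\lambda_0}$. Lemma \ref{l9} together with $\boldsymbol{a}=\overline{\boldsymbol{a}}$ then yields
\[
\tau_{\boldsymbol{a}}(r_\zeta)=|1+\varphi_{\boldsymbol{a}}^{(0)}(\zeta)|^2\,\frac{\operatorname{Im}m_{\boldsymbol{a}}(\zeta)}{\operatorname{Im}\zeta},\quad \zeta\in D_-\setminus\mathbb{R},
\]
and, in the confluent limit, $\tau_{\boldsymbol{a}}(r_x)=(1+\varphi_{\boldsymbol{a}}^{(0)}(x))^2\,m_{\boldsymbol{a}}'(x)$ for $x\in D_-\cap\mathbb{R}$. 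Positivity of $\operatorname{Im}m_{\boldsymbol{a}}/\operatorname{Im}\zeta$ is condition (ii) of $M_{\lambda_0}$; positivity of $m_{\boldsymbol{a}}'(x)$ follows from differentiating Lemma \ref{l17}, with the sign flip of $\phi'(x)=1-x^{-2}$ between $D_-^1$ and $D_-^2$ absorbed by the opposite prefactors $+a_1^2$ and $-a_0^2$ (a short Taylor expansion at $x=0$ gives $m_{\boldsymbol{a}}'(0)=a_0^2>0$). Finally $\tau_{\boldsymbol{a}}(q_x)=1+\varphi_{\boldsymbol{a}}^{(0)}(x)$ is real, continuous, nowhere vanishing on $D_-\cap\mathbb{R}$, equals $1+\varphi_{\boldsymbol{a}}^{(0)}(0)>0$ at $x=0$ and tends to $1$ along the unbounded real components, so positivity follows componentwise.

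For (iii), the identity $n_{\widetilde{\boldsymbol{a}}}(\zeta^{-1})=1/n_{\boldsymbol{a}}(\zeta)$ arising from the two presentations in (\ref{28-1}) translates, via (i), into
\[
(\phi(\eta)-m_{\boldsymbol{a}}(\eta^{-1}))(\phi(\eta)-m_{\widetilde{\boldsymbol{a}}}(\eta))=\tau_{\boldsymbol{a}}(z)\tau_{\boldsymbol{a}}(z^{-1}),
\]
giving $m_{\widetilde{\boldsymbol{a}}}$ in closed form from $m_{\boldsymbol{a}}$. Substituting Lemma \ref{l17} for $m_{\boldsymbol{a}}(\eta^{-1})$ as a function of $\phi(\eta)$, one verifies the asymptotic (i) and reality directly, reduces condition (iii) for $m_{\widetilde{\boldsymbol{a}}}$ to the same condition for $m_{\boldsymbol{a}}$ via
\[
m_{\widetilde{\boldsymbol{a}}}(z)-m_{\widetilde{\boldsymbol{a}}}(z^{-1})=\frac{a_1^2\,(m_{\boldsymbol{a}}(z)-m_{\boldsymbol{a}}(z^{-1}))}{(\phi(z)-m_{\boldsymbol{a}}(z))(\phi(z)-m_{\boldsymbol{a}}(z^{-1}))},
\]
and preserves non-rationality. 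For $m_{q_x\boldsymbol{a}}$ I invoke Lemma \ref{l13-1} directly, whose hypotheses are exactly those supplied by (ii) and the standing assumption; for $m_{r_\zeta\boldsymbol{a}}$ I iterate Lemma \ref{l13-1} along $r_\zeta=q_\zeta q_{\overline{\zeta}}$, applying (ii) a second time to the symbol $q_\zeta\boldsymbol{a}$ to secure the hypotheses at $\overline{\zeta}$. The main obstacle is condition (ii) of $M_{\lambda_0}$ for the transformed $m$-functions on the inner region $\mathbb{C}_+\cap\{|z|<1\}$: direct sign analysis on the closed-form expressions only delivers $\operatorname{Im}m\geq 0$, with equality precisely when $\sigma_\pm$ degenerates into a single point mass. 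Ruling this out requires the strict Cauchy--Schwarz inequality $|\!\int(\lambda-w)^{-1}d\sigma|^2\leq\int|\lambda-w|^{-2}d\sigma$ for probability measures, whose strictness is guaranteed by the non-rationality condition (iv) of $M_{\lambda_0}$; this is the sole point where the full definition of $M_{\lambda_0}$ is used.
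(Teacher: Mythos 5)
Your proposal is correct and follows essentially the same route as the paper: part (i) via the expansion of $n_{\boldsymbol{a}}$ at infinity combined with (\ref{28}), (\ref{28-1}) and Lemma \ref{l17}; part (ii) via non-vanishing of $1+\varphi_{\boldsymbol{a}}^{(0)}$ on $D_-$ forced by analyticity of $m_{\boldsymbol{a}}$ (your detour through Lemma \ref{l3} and $n_{\boldsymbol{a}}$ is an equivalent packaging of the paper's remark that numerator and denominator cannot vanish simultaneously) together with the Lemma \ref{l9} formulas; and part (iii) via the same closed-form expressions for $m_{\widetilde{\boldsymbol{a}}}$ and Lemma \ref{l13-1}, with the delicate strict positivity of $\operatorname{Im}$ on the crossed region handled by exactly the Herglotz/Cauchy--Schwarz argument (strictness from condition (iv) of $M_{\lambda_0}$) that the paper isolates as Lemma \ref{la2} in the Appendix.
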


\begin{proof}
Note%
\[
\tau_{\boldsymbol{a}}\left(  z\right)  =\tau_{\widetilde{\boldsymbol{a}}%
}\left(  z^{-1}\right)  =1+\varphi_{\widetilde{\boldsymbol{a}}}^{\left(
0\right)  }\left(  0\right)  =\lim_{\varepsilon\rightarrow0}\left(
1+\varepsilon^{-1}\varphi_{\boldsymbol{a}}^{\left(  -1\right)  }\left(
\varepsilon^{-1}\right)  \right)  \text{.}%
\]
(\ref{28}), (\ref{28-1}) imply%
\[
\tau_{\boldsymbol{a}}\left(  z^{-1}\right)  \left(  1+\zeta\varphi
_{\boldsymbol{a}}^{\left(  -1\right)  }\left(  \zeta\right)  \right)
=\zeta\left(  \phi\left(  \zeta\right)  -m_{\boldsymbol{a}}\left(
\zeta\right)  \right)  \left(  1+\varphi_{\boldsymbol{a}}^{\left(  0\right)
}\left(  \zeta\right)  \right)  \text{,}%
\]
hence%
\begin{equation}
\tau_{\boldsymbol{a}}\left(  z^{-1}\right)  \lim_{\varepsilon\rightarrow
0}\left(  1+\varepsilon^{-1}\varphi_{\boldsymbol{a}}^{\left(  -1\right)
}\left(  \varepsilon^{-1}\right)  \right)  =\lim_{\varepsilon\rightarrow
0}\varepsilon^{-1}\left(  \phi\left(  \varepsilon^{-1}\right)
-m_{\boldsymbol{a}}\left(  \varepsilon^{-1}\right)  \right)  \text{.}
\label{36}%
\end{equation}
And the expression of Lemma \ref{l17} shows (i).

If $1+\varphi_{\boldsymbol{a}}^{\left(  0\right)  }\left(  0\right)  \neq0$,
$1+\varphi_{\boldsymbol{a}}^{\left(  0\right)  }$ and $z+\varphi
_{\boldsymbol{a}}^{\left(  1\right)  }$ cannot vanish simultaneously on
$D_{-}$, which together with the analyticity of $m_{\boldsymbol{a}}$ on
$\mathbb{C}\backslash\Sigma_{\lambda_{0}}$ implies $1+\varphi_{\boldsymbol{a}%
}^{\left(  0\right)  }\left(  \zeta\right)  \neq0$ on $D_{-}$. On $\mathbb{R}$
we have%
\[
1+\varphi_{\boldsymbol{a}}^{\left(  0\right)  }\left(  x\right)
\rightarrow1\text{ \ as }x\rightarrow\infty\text{ and }1+\varphi
_{\boldsymbol{a}}^{\left(  0\right)  }\left(  0\right)  >0\text{,}%
\]
hence $\tau_{\boldsymbol{a}}\left(  q_{x}\right)  =1+\varphi_{\boldsymbol{a}%
}^{\left(  0\right)  }\left(  x\right)  >0$ on $D_{-}\cap\mathbb{R}$. The
property $\tau_{\boldsymbol{a}}\left(  r_{\zeta}\right)  >0$ on $D_{-}%
\backslash\mathbb{R}$ follows from (ii) of Lemma \ref{l9} and $1+\varphi
_{\boldsymbol{a}}^{\left(  0\right)  }\left(  \zeta\right)  \neq0$ on $D_{-}$.

(\ref{28}), (\ref{28-1}) imply%
\[
m_{\widetilde{\boldsymbol{a}}}\left(  \zeta\right)  =\phi\left(  \zeta\right)
-\dfrac{\tau_{\boldsymbol{a}}\left(  z\right)  }{n_{\boldsymbol{a}}\left(
\zeta^{-1}\right)  }=\phi\left(  \zeta\right)  -\dfrac{\tau_{\boldsymbol{a}%
}\left(  z\right)  \tau_{\boldsymbol{a}}\left(  z^{-1}\right)  }{\phi\left(
\zeta\right)  -m_{\boldsymbol{a}}\left(  \zeta^{-1}\right)  }\text{.}%
\]
$\operatorname{Im}m_{\widetilde{\boldsymbol{a}}}\left(  \zeta\right)
>\operatorname{Im}\phi\left(  \zeta\right)  >0$ on $D_{-}^{1}$ obeys from
$\tau_{\boldsymbol{a}}\left(  z\right)  \tau_{\boldsymbol{a}}\left(
z^{-1}\right)  >0$ and (\ref{35}). To see $\operatorname{Im}m_{\widetilde
{\boldsymbol{a}}}\left(  \zeta\right)  >0$ on $D_{-}^{2}$ set $f\left(
\zeta\right)  =\tau_{\boldsymbol{a}}\left(  z\right)  \tau_{\boldsymbol{a}%
}\left(  z^{-1}\right)  \left(  \phi\left(  \zeta\right)  -m_{\boldsymbol{a}%
}\left(  \zeta^{-1}\right)  \right)  ^{-1}$. Since $\phi$ maps $\mathbb{C}%
_{+}\cap\left\{  \left\vert z\right\vert <1\right\}  $ onto $\mathbb{C}_{-}$
conformaly, $f\left(  \phi^{-1}\left(  z\right)  \right)  $ satisfies
$\operatorname{Im}f\left(  \phi^{-1}\left(  z\right)  \right)  <0$ on
$\mathbb{C}_{-}$ and%
\[
\lim_{z\rightarrow\infty}z^{-1}f\left(  \phi^{-1}\left(  z\right)  \right)
=-1\text{ \ due to (\ref{36}).}%
\]
Then, $f\left(  \phi^{-1}\left(  z\right)  \right)  $ has an representation%
\[
f\left(  \phi^{-1}\left(  z\right)  \right)  =\alpha+z+\int_{-\infty}^{\infty
}\left(  \dfrac{1}{\lambda-z}-\dfrac{\lambda}{\lambda^{2}+1}\right)
\sigma\left(  d\lambda\right)  \text{ \ on }\mathbb{C}_{-}%
\]
with $\alpha\in\mathbb{R}$ and a non-negative measure $\sigma$, hence we have
$\operatorname{Im}f\left(  \zeta\right)  \leq\operatorname{Im}\phi\left(
\zeta\right)  $ if $\zeta\in D_{-}^{2}$. If we have an equality here for some
$\zeta_{0}$, then $f\left(  \zeta\right)  =\alpha+\phi\left(  \zeta\right)  $
identically should hold, which means $m_{\boldsymbol{a}}$ is a rational
function of $\phi$ on $D_{-}^{1}$. This contradicts (iv) of (\ref{35}), hence
one has $m_{\widetilde{\boldsymbol{a}}}\in M_{\lambda_{0}}$.

To verify $m_{q_{x}\boldsymbol{a}}\in M_{\lambda_{0}}$ we use (\ref{28-4}):%
\[
\left\{
\begin{array}
[c]{l}%
m_{q_{x}\boldsymbol{a}}\left(  z\right)  =\left(  m_{\boldsymbol{a}}\left(
0\right)  -m_{\boldsymbol{a}}\left(  x\right)  \right)  \left(  1-\dfrac
{\phi\left(  z\right)  -\phi\left(  x\right)  }{m_{\boldsymbol{a}}\left(
z\right)  -m_{\boldsymbol{a}}\left(  x\right)  }\right)  +\phi\left(  z\right)
\\
m_{q_{0}\boldsymbol{a}}\left(  z\right)  =m_{z^{-1}\boldsymbol{a}}\left(
z\right)  =\dfrac{m_{\boldsymbol{a}}^{\prime}\left(  0\right)  }%
{m_{\boldsymbol{a}}\left(  x\right)  -m_{\boldsymbol{a}}\left(  z\right)
}+\phi\left(  z\right)
\end{array}
\right.  \text{.}%
\]
The property $m_{q_{x}\boldsymbol{a}}=\overline{m}_{q_{x}\boldsymbol{a}}$ is
easy to see, and $\operatorname{Im}m_{\boldsymbol{a}}\left(  z\right)  >0$ on
$\mathbb{C}_{+}\backslash\Sigma_{\lambda_{0}}$ implies $m_{\boldsymbol{a}%
}\left(  z\right)  -m_{\boldsymbol{a}}\left(  x\right)  \neq0$, hence
$m_{q_{x}\boldsymbol{a}}$ is analytic on $\mathbb{C}_{\pm}\backslash
\Sigma_{\lambda_{0}}$. Note $m_{\boldsymbol{a}}^{\prime}\left(  x\right)  >0$
on $\mathbb{R}\backslash\Sigma_{\lambda_{0}}$ due to Lemma \ref{l17} (or Hopf
lemma), which yields $m_{\boldsymbol{a}}\left(  z\right)  -m_{\boldsymbol{a}%
}\left(  x\right)  \neq0$ for any $z\in\mathbb{R}\backslash\Sigma_{\lambda
_{0}}$. Then, we have the analyticity of $m_{\boldsymbol{a}}$ on
$\mathbb{C}\backslash\Sigma_{\lambda_{0}}$. The properties (i), (iii) and (iv)
of (\ref{35}) can be easily verified. The property (ii) follows from Lemma
\ref{la2} in Appendix, hence we have $m_{q_{x}\boldsymbol{a}}\in
M_{\lambda_{0}}$.

In the proof of $m_{r_{\zeta}\boldsymbol{a}}\in M_{\lambda_{0}}$ the
properties (i), (iii) and (iv) are easily verified and the property (ii)
follows from Lemma \ref{la2}. Only the analyticity remains to be proved. The
analyticity of $m_{r_{\zeta}\boldsymbol{a}}$ on $\mathbb{C}_{+}\backslash
\Sigma_{\lambda_{0}}$ follows from $\operatorname{Im}m_{r_{\zeta
}\boldsymbol{a}}\left(  z\right)  >0$ by (ii). On $\mathbb{R}$ we use%
\[
1+\varphi_{r_{\zeta}\boldsymbol{a}}^{\left(  0\right)  }\left(  x\right)
=\dfrac{\tau_{\boldsymbol{a}}\left(  r_{\zeta}q_{x}\right)  }{\tau
_{\boldsymbol{a}}\left(  r_{\zeta}\right)  }=\dfrac{\tau_{\boldsymbol{a}%
}\left(  q_{x}\right)  \tau_{q_{x}\boldsymbol{a}}\left(  r_{\zeta}\right)
}{\tau_{\boldsymbol{a}}\left(  r_{\zeta}\right)  }=\dfrac{\tau_{\boldsymbol{a}%
}\left(  q_{x}\right)  \left\vert \tau_{q_{x}\boldsymbol{a}}\left(  q_{\zeta
}\right)  \right\vert ^{2}\operatorname{Im}m_{q_{x}\boldsymbol{a}}\left(
\zeta\right)  }{\operatorname{Im}m_{\boldsymbol{a}}\left(  \zeta\right)
}\text{,}%
\]
which is positive, since $\tau_{\boldsymbol{a}}\left(  q_{x}\right)
\left\vert \tau_{q_{x}\boldsymbol{a}}\left(  q_{\zeta}\right)  \right\vert
^{2}>0$ and $m_{\boldsymbol{a}}$, $m_{q_{x}\boldsymbol{a}}\in M_{\lambda_{0}}%
$. Therefore, we see the analyticity of $m_{r_{\zeta}\boldsymbol{a}}$ on
$\mathbb{C}\backslash\Sigma_{\lambda_{0}}$, which completes the proof.\bigskip
\end{proof}

As in Introduction for $\boldsymbol{a}\in\boldsymbol{A}_{++}^{inv}\left(
C\right)  $ we define%
\[
a_{n}\left(  \boldsymbol{a}\right)  =\sqrt{\dfrac{\tau_{\boldsymbol{a}}\left(
z^{n}\right)  \tau_{\boldsymbol{a}}\left(  z^{n-2}\right)  }{\tau
_{\boldsymbol{a}}\left(  z^{n-1}\right)  ^{2}}}\text{, \ }b_{n}\left(
\boldsymbol{a}\right)  =\left.  \partial_{\varepsilon}\log\dfrac
{\tau_{\boldsymbol{a}}\left(  z^{n}e^{\varepsilon z}\right)  }{\tau
_{\boldsymbol{a}}\left(  z^{n-1}e^{\varepsilon z}\right)  }\right\vert
_{\varepsilon=0}\text{.}%
\]

\begin{lemma}
\label{l19}For $\boldsymbol{a}\in\boldsymbol{A}_{++}^{inv}\left(  C\right)  $,
$\boldsymbol{a}^{\prime}\in\boldsymbol{A}_{++}^{inv}\left(  C^{\prime}\right)
$ assume $m_{\boldsymbol{a}}=m_{\boldsymbol{a}^{\prime}}$ on $D_{-}\cap
D_{-}^{\prime}$. Then, one has $m_{g\boldsymbol{a}}=m_{g\boldsymbol{a}%
^{\prime}}$ for any $g\in\Gamma_{\operatorname{real}}\left(  C\right)
\cap\Gamma_{\operatorname{real}}\left(  C^{\prime}\right)  $ and $a_{n}\left(
\boldsymbol{a}\right)  =a_{n}\left(  \boldsymbol{a}^{\prime}\right)  $,
$b_{n}\left(  \boldsymbol{a}\right)  =b_{n}\left(  \boldsymbol{a}^{\prime
}\right)  $ for any $n\in\mathbb{Z}$.
\end{lemma}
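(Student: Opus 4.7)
The plan is to propagate the equality of $m$-functions under the $\Gamma_{\operatorname{real}}$-action and then extract the equality of Jacobi coefficients from the $m$-function alone. I will carry this out in two phases.

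First I would handle the coefficients. By Proposition \ref{p3}, the asymptotic expansions of $m_{\boldsymbol{a}}$ at $\infty$ and $0$ recover $a_0, a_1, b_0$ (for instance $a_1^2 = \lim_{z\to\infty} z(\phi(z) - m_{\boldsymbol{a}}(z))$), and the decomposition given there identifies the Weyl functions $m_\pm$ of $H_{q(\boldsymbol{a})}$ as explicit expressions in $m_{\boldsymbol{a}}$ on $D_-^1, D_-^2$. Standard inverse spectral theory for Jacobi operators recovers all of $\{a_n,b_n\}_{n\in\mathbb{Z}}$ from $m_\pm$. Since both $m_{\boldsymbol{a}}$ and $m_{\boldsymbol{a}'}$ extend analytically to $\mathbb{C}\setminus\Sigma_{\lambda_0}$ (again by Proposition \ref{p3}) and agree on the nonempty open set $D_-\cap D_-'$, they agree on all of $\mathbb{C}\setminus\Sigma_{\lambda_0}$; hence the Weyl functions, and therefore the coefficients, coincide.

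Next I would propagate the $m$-function equality under $g$. Lemma \ref{l13-1} together with its special case (\ref{28-5}) expresses $m_{q_\zeta\boldsymbol{a}}$ as a closed-form expression in $m_{\boldsymbol{a}}$ (and $\zeta$) for every $\zeta\in D_-\cap D_-'$, including $\zeta=0$. Consequently $m_{\boldsymbol{a}}=m_{\boldsymbol{a}'}$ implies $m_{q_\zeta\boldsymbol{a}}=m_{q_\zeta\boldsymbol{a}'}$, and iterating this along a factorization of a rational $r\in\Gamma_{\operatorname{real}}(C)\cap\Gamma_{\operatorname{real}}(C')$ into elementary factors $q_{\zeta_j}^{\pm 1}$, combined with the cocycle identity of Lemma \ref{l8}(iii), yields $m_{r\boldsymbol{a}}=m_{r\boldsymbol{a}'}$ for all such $r$. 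For the exponential part, write $g=re^h$ with $r$ rational and $h$ analytic on a neighborhood $U$ of $\overline{D}_+\cup\overline{D}_+'$, decompose $h=h_1+h_2$ as in the proof of Lemma \ref{l15}, and approximate $e^{h_1},e^{h_2}$ uniformly on $U$ by rational elements $g_n\in\Gamma_{\operatorname{real}}$ with poles confined to $D_-^1$ or $D_-^2$ respectively. The tau-function expression (\ref{28-1}) together with Lemma \ref{l9} represents $m_{g_n\boldsymbol{a}}$ as a rational combination of tau functions of $\boldsymbol{a}$, and Lemma \ref{l10} gives convergence of each such tau function to its exponential limit. Passing to the limit produces $m_{e^h\boldsymbol{a}}=m_{e^h\boldsymbol{a}'}$, and cocycling once more with $r$ completes the identity $m_{g\boldsymbol{a}}=m_{g\boldsymbol{a}'}$.

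The main obstacle is controlling the limiting step: Lemma \ref{l10} yields continuity of tau functions, but $m_{g\boldsymbol{a}}$ is defined as a ratio, so one must verify that the approximants $g_n\boldsymbol{a}$ remain in $\boldsymbol{A}_{++}^{inv}$ and that the denominator tau functions stay bounded away from zero on compact subsets of $D_-\cap D_-'$. Both are guaranteed by Proposition \ref{p2} once the $g_n$ are selected inside $\Gamma_{\operatorname{real}}$ with all poles in $D_-$, which is precisely the construction used in Lemma \ref{l15}. Everything else is bookkeeping with formulas already derived in the paper.
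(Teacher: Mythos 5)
Your proposal is correct and follows essentially the same route as the paper: the key step is the closed-form expression of $m_{q_{\zeta}\boldsymbol{a}}$ in terms of $m_{\boldsymbol{a}}$ from Lemma \ref{l13-1}, which immediately transfers the equality $m_{\boldsymbol{a}}=m_{\boldsymbol{a}^{\prime}}$ to $m_{q_{\zeta}\boldsymbol{a}}=m_{q_{\zeta}\boldsymbol{a}^{\prime}}$, and the coefficients are then recovered from the uniqueness of the correspondence $q\Longleftrightarrow\left\{ m_{\pm},a_{0},a_{1},b_{0}\right\}$. You spell out the iteration over rational factors and the approximation of the exponential part in more detail than the paper's (very terse) proof, but the substance is identical.
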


\begin{proof}
The identity $m_{g\boldsymbol{a}}=m_{g\boldsymbol{a}^{\prime}}$ follows from
\begin{align*}
m_{q_{\zeta}\boldsymbol{a}}  &  =\left(  m_{\boldsymbol{a}}\left(  0\right)
-m_{\boldsymbol{a}}\left(  \zeta\right)  \right)  \left(  1-\dfrac{\phi
-\phi\left(  \zeta\right)  }{m_{\boldsymbol{a}}-m_{\boldsymbol{a}}\left(
\zeta\right)  }\right)  +\phi\\
&  =\left(  m_{\boldsymbol{a}^{\prime}}\left(  0\right)  -m_{\boldsymbol{a}%
^{\prime}}\left(  \zeta\right)  \right)  \left(  1-\dfrac{\phi-\phi\left(
\zeta\right)  }{m_{\boldsymbol{a}^{\prime}}-m_{\boldsymbol{a}^{\prime}}\left(
\zeta\right)  }\right)  +\phi=m_{q_{\zeta}\boldsymbol{a}^{\prime}}%
\end{align*}
on $D_{-}\cap D_{-}^{\prime}$. The identities for $a_{n}$, $b_{n}$ are
obtained by the uniqueness of the correspondence%
\[
q=\left\{  a_{n},b_{n}\right\}  _{n\in\mathbb{Z}}\Longleftrightarrow\left\{
m_{\pm},a_{0},a_{1},b_{0}\right\}  \text{.}%
\]
\medskip
\end{proof}

\noindent\textbf{Proof of Theorem}\newline Lemma \ref{l18} provides a proof of
Theorem as follows. We show $\boldsymbol{m}\in\boldsymbol{A}_{++}^{inv}\left(
C\right)  $. Let $r$ be a rational function such that%
\[
r_{1}=q_{x_{1}}\cdots q_{x_{m_{1}}}r_{\zeta_{1}}\cdots r_{\zeta_{n_{1}}}\text{
for }\exists x_{j}\in D_{-}\cap\mathbb{R}\text{, }\zeta_{j}\in D_{-}%
\backslash\mathbb{R}\text{.}%
\]
Applying Lemma \ref{l18} to $r_{1}$ inductively, we see $\tau_{\boldsymbol{m}%
}\left(  r_{1}\right)  >0$. General rational function $r\in\Gamma
_{\operatorname{real}}$ can be described by $r=r_{1}r_{2}^{-1}$ with%
\[
r_{2}=q_{y_{1}}\cdots q_{y_{m_{2}}}r_{\eta_{1}}\cdots r_{\eta_{n_{2}}}\text{
for }\exists y_{j}\in D_{-}\cap\mathbb{R}\text{, }\eta_{j}\in D_{-}%
\backslash\mathbb{R}\text{.}%
\]
Then, one has%
\[
\tau_{\boldsymbol{m}}\left(  r_{1}r_{2}^{-1}\right)  =\tau_{\boldsymbol{m}%
}\left(  r_{1}r_{2}^{-1}\widetilde{r}_{2}^{-1}\widetilde{r}_{2}\right)
=\tau_{\boldsymbol{m}}\left(  r_{2}^{-1}\widetilde{r}_{2}^{-1}\right)
\tau_{\boldsymbol{m}}\left(  r_{1}\widetilde{r}_{2}\right)  =\tau
_{\boldsymbol{m}}\left(  r_{2}^{-1}\widetilde{r}_{2}^{-1}\right)
\tau_{\widetilde{r_{1}\boldsymbol{m}}}\left(  r_{2}\right)  \text{.}%
\]
Since we know $m_{\widetilde{r_{1}\boldsymbol{m}}}\in M_{\lambda_{0}}$, we see
$\tau_{\widetilde{r_{1}\boldsymbol{m}}}\left(  r_{2}\right)  >0$, which shows
$\tau_{\boldsymbol{m}}\left(  r\right)  >0$. The positivity $\tau
_{\boldsymbol{m}}\left(  z^{n}\right)  $ is included in the statement
$\tau_{\boldsymbol{m}}\left(  r\right)  >0,$ since $\tau_{\boldsymbol{a}%
}\left(  z^{-1}\right)  =\tau_{\boldsymbol{a}}\left(  q_{0}\right)  $,
$\tau_{\boldsymbol{a}}\left(  z\right)  =\tau_{\widetilde{\boldsymbol{a}}%
}\left(  q_{0}\right)  $ for any $\boldsymbol{a}\in\boldsymbol{A}^{inv}\left(
C\right)  $, which proves $\boldsymbol{m}\in\boldsymbol{A}_{++}^{inv}\left(
C\right)  $.

For $q=\left\{  a_{n},b_{n}\right\}  _{n\in\mathbb{Z}}\in Q_{\lambda_{0}}$
define $\boldsymbol{m}\in M_{\lambda_{0}}$ by (\ref{37}) and for $g\in
\Gamma_{\operatorname{real}}$ set%
\[
\mathrm{Toda}\left(  g\right)  q=\left\{  a_{n}\left(  g\boldsymbol{m}\right)
,b_{n}\left(  g\boldsymbol{m}\right)  \right\}  _{n\in\mathbb{Z}}\text{.}%
\]
Then, Proposition \ref{p3} implies $\mathrm{Toda}\left(  g\right)  q\in
Q_{\lambda_{0}^{\prime}}$\ with $\lambda_{0}^{\prime}=\max\left\{  \pm
\lambda_{\pm}\right\}  $. Since $g\boldsymbol{m}\in\boldsymbol{A}_{++}%
^{inv}\left(  C\right)  $ for arbitrary $C$ containing $\Sigma_{\lambda_{0}}$
and $a_{n}\left(  g\boldsymbol{m}\right)  ,b_{n}\left(  g\boldsymbol{m}%
\right)  $ do not depend on $C$ due to Lemma \ref{l19}, one has $\mathrm{Toda}%
\left(  g\right)  q\in Q_{\lambda_{0}}$. The flow property $\mathrm{Toda}%
\left(  g_{1}g_{2}\right)  q=\mathrm{Toda}\left(  g_{1}\right)  \mathrm{Toda}%
\left(  g_{2}\right)  q$\ is deduced by the following fact:
\begin{gather*}
\text{For }\boldsymbol{a}\in\boldsymbol{A}_{++}^{inv}\left(  C\right)  \text{,
}g\in\Gamma_{\operatorname{real}}\left(  C\right)  \Longrightarrow
m_{g\boldsymbol{a}}=m_{g\boldsymbol{m}_{\boldsymbol{a}}}\text{ with}\\
\boldsymbol{m}_{\boldsymbol{a}}\left(  z\right)  =\left(  \dfrac
{zm_{\boldsymbol{a}}(z)-1}{z^{2}-1},z^{2}\dfrac{z-m_{\boldsymbol{a}}\left(
z\right)  }{z^{2}-1}\right)  \in\boldsymbol{A}_{++}^{inv}\left(  C\right)
\text{.}%
\end{gather*}
This is a conclusion of Lemma \ref{l19}, since 2 symbols $\boldsymbol{a}$,
$\boldsymbol{m}_{\boldsymbol{a}}\in\boldsymbol{A}_{++}^{inv}\left(  C\right)
$ have the same $m$-function $m_{\boldsymbol{a}}$. Then, setting
$\boldsymbol{a}=\boldsymbol{m}_{g_{2}\boldsymbol{m}}$, we have $m_{g_{1}%
\boldsymbol{a}}=m_{g_{1}\boldsymbol{m}_{\boldsymbol{a}}}$, which leads us to
$\mathrm{Toda}\left(  g_{1}g_{2}\right)  q=\mathrm{Toda}\left(  g_{1}\right)
\mathrm{Toda}\left(  g_{2}\right)  q$. This completes the proof of Theorem.

\section{Appendix}

\subsection{Weyl functions of Jacobi operators}

In this section we provide necessary information on spectral theory for Jacobi
operators. For $q\equiv\left\{  a_{n},b_{n}\right\}  _{n\in\mathbb{Z}}$ with
$a_{n}>0$, $b_{n}\in\mathbb{R}$ the associated Jacobi operator is defined by%
\[
\left(  H_{q}u\right)  _{n}=a_{n+1}u_{n+1}+a_{n}u_{n-1}+b_{n}u_{n}%
\]
on $\ell_{0}\left(  \mathbb{Z}\right)  =\left\{  f\in\ell^{2}\left(
\mathbb{Z}\right)  \text{; }f\text{ has a finite support}\right\}  $ as a
symmetric operator in $\ell^{2}\left(  \mathbb{Z}\right)  $. It is possible to
define a similar operator on $\ell^{2}\left(  \mathbb{Z}_{\geq1}\right)  $ by%
\[
\left(  H_{q}^{+}u\right)  _{n}=\left\{
\begin{array}
[c]{ll}%
a_{n+1}u_{n+1}+a_{n}u_{n-1}+b_{n}u_{n} & \text{for }n\geq2\\
a_{2}u_{2}+b_{1}u_{1} & \text{for }n\geq1
\end{array}
\right.  \text{.}%
\]
There are 2 cases concerning the self-adjoint extension of $H_{q}^{+}$, one is
the unique case and the other is the non-unique case. The boundary $+\infty$
is called limit point type if the extension is unique and limit circle type if
the extension is not unique. It is known (see \cite{t}) that the boundary
$+\infty$ is limit point type if and only if for any $z\in\mathbb{C}%
\backslash\mathbb{R}$ (or equivalently for some $z\in\mathbb{C}\backslash
\mathbb{R}$) the space $\left\{  u\in\ell^{2}\left(  \mathbb{Z}_{+}\right)
\text{; }H_{q}^{+}u=zu\right\}  $ is one dimensional, and in this case the
Weyl function $m_{+}$ for $H_{q}^{+}$ is defined by%
\[
m_{+}\left(  z\right)  =-\dfrac{g_{1}^{+}\left(  z\right)  }{a_{1}g_{0}%
^{+}\left(  z\right)  }\text{ for }g_{n}^{+}\left(  z\right)  \in\left\{
u\in\ell^{2}\left(  \mathbb{Z}_{+}\right)  \text{; }H_{q}^{+}u=zu\right\}
\backslash\left\{  0\right\}  \text{.}%
\]
The Weyl function $m_{-}$ for $H_{q}^{-}$ can be defined similarly by%
\[
m_{-}\left(  z\right)  =-\dfrac{g_{-1}^{-}\left(  z\right)  }{a_{0}g_{0}%
^{-}\left(  z\right)  }\text{ for }g_{n}^{-}\left(  z\right)  \in\left\{
u\in\ell^{2}\left(  \mathbb{Z}_{-}\right)  \text{; }H_{q}^{-}u=zu\right\}
\backslash\left\{  0\right\}  \text{,}%
\]
if the boundary $-\infty$ is limit point type. The Weyl functions $m_{\pm}$
are known to be analytic on $\mathbb{C}\backslash\mathbb{R}$ and satisfy
$\operatorname{Im}m_{\pm}\left(  z\right)  /\operatorname{Im}z>0$ on
$\mathbb{C}\backslash\mathbb{R}$. It is known that if coefficients
$q\equiv\left\{  a_{n},b_{n}\right\}  _{n\in\mathbb{Z}}$ are bounded, which is
equivalent to the boundedness of $H_{q}$, then the boundaries $\pm\infty$ are
limit point type. In this paper we treat only bounded operators $H_{q}$. In
this case the Green functions for $H_{q}^{\pm}$, $H_{q}$ are given by%
\begin{equation}
\left(  H_{q}^{+}-z\right)  ^{-1}\left(  j,k\right)  =\dfrac{s_{j}^{+}\left(
z\right)  g_{k}^{+}\left(  z\right)  }{-a_{1}g_{0}^{+}\left(  z\right)
}\text{ \ if \ }1\leq j\leq k \label{38-1}%
\end{equation}
with $s_{j}^{+}$ the solutions to $H_{q}s^{+}=zs^{+}$ satisfying $s_{0}^{+}%
=0$, $s_{1}^{+}=1$. Similarly one can describe $\left(  H_{q}^{-}-z\right)
^{-1}\left(  j,k\right)  $ in terms of $s_{j}^{-}$, $g_{j}^{-}$, which yields%
\begin{equation}
m_{+}\left(  z\right)  =\left(  H_{q}^{+}-z\right)  ^{-1}\left(  1,1\right)
\text{, \ \ }m_{-}\left(  z\right)  =\left(  H_{q}^{-}-z\right)  ^{-1}\left(
-1,-1\right)  \text{.} \label{38}%
\end{equation}
The Green function for $H_{q}$ is given by%
\[
\left(  H_{q}-z\right)  ^{-1}\left(  j,k\right)  =\dfrac{g_{j}^{-}\left(
z\right)  g_{k}^{+}\left(  z\right)  }{a_{1}\left(  g_{0}^{-}\left(  z\right)
g_{1}^{+}\left(  z\right)  -g_{1}^{-}\left(  z\right)  g_{0}^{+}\left(
z\right)  \right)  }\text{,}%
\]
and one has%
\[
\left\{
\begin{array}
[c]{l}%
\left(  H_{q}-z\right)  ^{-1}\left(  0,0\right)  =\dfrac{-1}{a_{1}^{2}%
m_{+}\left(  z\right)  +a_{0}^{2}m_{-}\left(  z\right)  +z-b_{0}}\\
\left(  H_{q}-z\right)  ^{-1}\left(  1,1\right)  =\dfrac{m_{+}\left(
z\right)  \left(  z-b_{0}+a_{0}^{2}m_{-}\left(  z\right)  \right)  }{a_{1}%
^{2}m_{+}\left(  z\right)  +a_{0}^{2}m_{-}\left(  z\right)  +z-b_{0}}%
\end{array}
\right.  \text{.}%
\]
Generally the spectrum \textrm{sp}$A$ of a self-adjoint operator coincides
with the singular set of the Green operator of $A$. For $H_{q}$ one has%
\[
\mathrm{sp}H_{q}=\text{the union of the singular sets of }\left(
H_{q}-z\right)  ^{-1}\left(  0,0\right)  \text{, }\left(  H_{q}-z\right)
^{-1}\left(  1,1\right)  \text{.}%
\]
If \textrm{sp}$H_{q}\subset\left[  -\lambda_{0},\lambda_{0}\right]  $ holds,
then \textrm{sp}$H_{q}^{\pm}\subset\left[  -\lambda_{0},\lambda_{0}\right]  $
are also valid, since the boundary $0$ is absorbing. Hence, in this case
$m_{\pm}\left(  z\right)  $ are analytic on $\mathbb{C}\backslash\left[
-\lambda_{0},\lambda_{0}\right]  $. Moreover, it should hold%
\[
a_{1}^{2}m_{+}\left(  z\right)  +a_{0}^{2}m_{-}\left(  z\right)  +z-b_{0}%
\neq0\text{ \ on }\mathbb{C}\backslash\left[  -\lambda_{0},\lambda_{0}\right]
\text{.}%
\]

On the other hand, in order that $a_{n}>0$ for any $n\in\mathbb{Z}$ holds
\textrm{sp}$H_{q}^{\pm}$ should be infinite set. This can be shown as follows:
Let $s_{n}=s_{n}(z)$ be a unique solution to
\[
a_{n+1}s_{n+1}+a_{n}s_{n-1}+b_{n}s_{n}=zs_{n}\text{ \ with\ }s_{0}=0\text{,
}s_{1}=1\text{.}%
\]
This is possible to define, since, due to $a_{n}\neq0$ one has%
\[
s_{1}(z)=1\text{, }s_{2}(z)=a_{2}^{-1}\left(  z-b_{1}\right)  \text{, }%
s_{3}(z)=a_{3}^{-1}\left(  a_{2}^{-1}\left(  z-b_{2}\right)  \left(
z-b_{1}\right)  -a_{2}\right)  \text{,}\cdots\text{.}%
\]
The Herglotz function $m_{+}(z)$ has a representation%
\[
m_{+}(z)=\int_{-\lambda_{0}}^{\lambda_{0}}\dfrac{\sigma_{+}\left(
d\lambda\right)  }{\lambda-z}\text{,}%
\]
and (\ref{38-1}) yields%
\[
\int_{-\lambda_{0}}^{\lambda_{0}}s_{m}\left(  \lambda\right)  s_{n}\left(
\lambda\right)  \sigma_{+}\left(  d\lambda\right)  =\left(  \delta_{m}%
,\delta_{n}\right)  \text{,}%
\]
namely $\left\{  s_{n}\left(  z\right)  \right\}  _{n\geq1}$ forms orthogonal
polynomials with respect to $\sigma_{+}$. Suppose \textrm{supp} $\sigma_{+}$
is a finite set $\left\{  \lambda_{1},\lambda_{2},\cdots,\lambda_{n}\right\}
\subset\left[  -\lambda_{0},\lambda_{0}\right]  $. Then, there exists a
polynomial $p(z)$ of degree $n+1$ such that%
\[
\left(  p,p\right)  =\int_{-\lambda_{0}}^{\lambda_{0}}\left\vert p\left(
\lambda\right)  \right\vert ^{2}\sigma_{+}\left(  d\lambda\right)  =0\text{.}%
\]
However, the polynomials $\left\{  s_{n}(z)\right\}  _{n\geq1}$ are linearly
independent (due to \textrm{deg }$s_{n}=n-1$), and hence%
\[
p(z)=c_{1}s_{1}(z)+c_{2}s_{2}(z)+\cdots+c_{n}s_{n}(z)+c_{n+1}s_{n+1}(z)
\]
holds with $c_{j}\in\mathbb{C}$. Then, the orthogonality of $\left\{
s_{n}(z)\right\}  _{n\geq1}$ implies%
\[
c_{j}=\left(  p,s_{j}\right)  =0\text{ \ for }\forall j=1\text{, }%
2\text{,}\cdots\text{, }n+1\text{,}%
\]
which is a contradiction. Consequently \textrm{supp} $\sigma_{+}$ should be infinite.

Summing up the above argument, we have

\begin{lemma}
\label{la1}Suppose a Jacobi operator $H_{q}$ satisfies \textrm{sp}%
$H_{q}\subset\left[  -\lambda_{0},\lambda_{0}\right]  $. Then, the Weyl
functions $m_{\pm}$ must satisfy\newline(i) $\ m_{\pm}\left(  z\right)  $ are
analytic on $\mathbb{C}\backslash\Sigma_{\lambda_{0}}$\ and%
\[
a_{1}^{2}m_{+}\left(  z\right)  +a_{0}^{2}m_{-}\left(  z\right)  +z-b_{0}%
\neq0\text{ \ on }\mathbb{C}\backslash\left[  -\lambda_{0},\lambda_{0}\right]
\text{.}%
\]
(ii) $m_{\pm}\left(  z\right)  $ are not rational functions.
\end{lemma}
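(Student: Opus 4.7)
The plan is to deduce (i) from a quadratic-form inclusion plus the Green's-function identity already recorded in the appendix, and to deduce (ii) by completing the orthogonal-polynomial argument sketched in the text.

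For the analyticity in (i), first establish $\mathrm{sp}\,H_q^\pm \subset [-\lambda_0, \lambda_0]$. Given $f \in \ell_0(\mathbb{Z}_{\geq 1})$, extend it by zero to $\tilde f \in \ell_0(\mathbb{Z})$; then $\|\tilde f\| = \|f\|$ and $(H_q^+ f, f) = (H_q \tilde f, \tilde f)$, so the quadratic form of $H_q^+$ is bounded by $\lambda_0\|f\|^2$ in absolute value, forcing $\mathrm{sp}\,H_q^+ \subset [-\lambda_0, \lambda_0]$ by self-adjointness; the same argument handles $H_q^-$. Since $m_\pm(z) = (H_q^\pm - z)^{-1}(\pm 1, \pm 1)$ by (\ref{38}), both $m_\pm$ are analytic on $\mathbb{C}\setminus[-\lambda_0, \lambda_0]$. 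The non-vanishing assertion then follows from the appendix identity
\[
(H_q - z)^{-1}(0,0) = \frac{-1}{a_1^2 m_+(z) + a_0^2 m_-(z) + z - b_0}:
\]
for $z\in \mathbb{C}\setminus[-\lambda_0,\lambda_0] \subset \mathbb{C}\setminus\mathrm{sp}\,H_q$, the left-hand side is the $(0,0)$-matrix element of a bounded operator and therefore finite, so the denominator on the right cannot vanish.

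For (ii), I complete the orthogonal-polynomial sketch begun in the excerpt. The solutions $s_n(z)$ of $H_q s = z s$ with $s_0 = 0$, $s_1 = 1$ are polynomials with $\deg s_n = n-1$, and by (\ref{38-1}) together with the Herglotz representation $m_+(z) = \int (\lambda-z)^{-1}\sigma_+(d\lambda)$ they satisfy $\int s_m(\lambda) s_n(\lambda)\,\sigma_+(d\lambda) = \delta_{mn}$. Assume for contradiction that $m_+$ is rational; then $\mathrm{supp}\,\sigma_+ = \{\lambda_1, \ldots, \lambda_N\}$ is a finite set. The polynomial $p(z) = \prod_{j=1}^N(z - \lambda_j)$ has degree $N$ and vanishes on $\mathrm{supp}\,\sigma_+$, so $\|p\|_{L^2(\sigma_+)} = 0$. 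Writing $p = \sum_{k=1}^{N+1} c_k s_k$ (possible because $\deg p = N$ and $s_1,\ldots,s_{N+1}$ span polynomials of degree $\leq N$) and applying the orthogonality relations yields $c_k = (p, s_k)_{L^2(\sigma_+)} = 0$ for every $k$, contradicting the nontriviality of $p$ as a polynomial. Thus $\sigma_+$ has infinite support and $m_+$ is not rational; the same argument applies to $m_-$.

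The main delicate point is not any single step but ensuring that the Green's-function identity from the appendix is genuinely valid on all of $\mathbb{C}\setminus[-\lambda_0, \lambda_0]$, including real points off the spectrum where $m_\pm$ are real-valued and could in principle produce a cancellation; this is automatic once the spectral inclusion $\mathrm{sp}\,H_q^\pm\subset[-\lambda_0,\lambda_0]$ makes both sides of the identity analytic on that connected open set, so the non-vanishing needs only to be checked somewhere (e.g.\ for large $|z|$, where the denominator behaves as $z - b_0 + O(z^{-1})$) and then propagates by connectedness and the absence of poles.
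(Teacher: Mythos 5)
Your proof is correct and follows essentially the same route as the paper, whose own argument is the discussion preceding the lemma: spectral inclusion $\mathrm{sp}\,H_q^{\pm}\subset[-\lambda_0,\lambda_0]$ combined with $m_\pm(z)=(H_q^\pm-z)^{-1}(\pm1,\pm1)$ for analyticity, the Green's-function identity for $(H_q-z)^{-1}(0,0)$ for the non-vanishing, and the orthogonal-polynomial/finite-support contradiction for (ii). You merely make explicit two points the paper leaves terse — the quadratic-form restriction argument behind ``the boundary $0$ is absorbing,'' and the analytic continuation of the resolvent identity to real $z$ off $[-\lambda_0,\lambda_0]$ (note the correct mechanism is that the identity $f(z)\cdot(H_q-z)^{-1}(0,0)=-1$ extends to the whole connected set, which forces $f\neq0$ everywhere there; non-vanishing at a single point would not by itself propagate).
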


\subsection{Transformation of Herglotz functions}

Set $\phi\left(  z\right)  =z+z^{-1}$ and in view of Lemma \ref{l13-1} define%
\[
d_{\zeta}m\left(  z\right)  =\phi\left(  z\right)  -\left(  m\left(
\zeta\right)  -m\left(  0\right)  \right)  \left(  1-\dfrac{\phi\left(
z\right)  -\phi\left(  \zeta\right)  }{m\left(  z\right)  -m\left(
\zeta\right)  }\right)  \text{.}%
\]

\begin{lemma}
\label{la2}Let $m\in M_{\lambda_{0}}$. Then, one has for $x\in\mathbb{R}%
\backslash\Sigma_{\lambda_{0}}$ and $\zeta\in\mathbb{C}_{+}\backslash
\Sigma_{\lambda_{0}}$
\[
\operatorname{Im}d_{x}m\left(  z\right)  >0\text{, \ \ }\operatorname{Im}%
d_{\overline{\zeta}}d_{\zeta}m\left(  z\right)  >0\text{ \ on \ }%
\mathbb{C}_{+}\backslash\Sigma_{\lambda_{0}}\text{.}%
\]

\end{lemma}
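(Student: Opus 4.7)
The plan is to apply the minimum principle for harmonic functions to $\operatorname{Im} d_\zeta m$ on the domain $\Omega := \mathbb{C}_+ \setminus \Sigma_{\lambda_0}$, which splits into the two components $\Omega_1 = \Omega \cap \{|z|>1\}$ and $\Omega_2 = \Omega \cap \{|z|<1\}$. Introducing $F := m - \phi$ and using the identity $m(z) - m(\zeta) = (\phi(z)-\phi(\zeta)) + (F(z)-F(\zeta))$, one first rewrites
$$d_\zeta m(z) = \phi(z) + (m(0)-m(\zeta))\,\frac{F(z)-F(\zeta)}{m(z)-m(\zeta)}.$$
By Lemma \ref{l17}, on each component of $\Omega$ the function $F$ admits a Herglotz-type representation with spectral measure supported in $[-\lambda_0,\lambda_0]$, and in particular $\operatorname{Im} F > 0$ and $\operatorname{Im} m > 0$ on $\Omega$.

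For the first claim, fix $x\in\mathbb{R}\setminus\Sigma_{\lambda_0}$. The coefficient $m(0)-m(x)$ is real, and the analyticity of $d_x m$ on $\Omega$ follows from $\operatorname{Im} m(z) > 0$, which prevents $m(z)$ from hitting the real value $m(x)$. The analysis of the harmonic function $u := \operatorname{Im} d_x m$ on each $\Omega_j$ then proceeds as follows: on $\mathbb{R}\setminus\Sigma_{\lambda_0}$ both $m$ and $\phi$ are real (using $m=\bar m$), so $u=0$; on the real intervals of $\Sigma_{\lambda_0}\cap\mathbb{R}$ approached from $\mathbb{C}_+$ and on the upper unit semicircle, the boundary values of $F$ coming from the Herglotz representations of $n_\pm$ have non-negative imaginary part, and a sign check using the real prefactor $m(0)-m(x)$ gives $u\geq 0$; in $\Omega_1$, the asymptotic $d_x m(z) = \phi(z) + O(z^{-1})$ as $z\to\infty$ gives $u(z)\sim\operatorname{Im} z \to +\infty$, while in the bounded component $\Omega_2$ the non-constancy of $\phi$ rules out $u\equiv 0$. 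The strong minimum principle then yields $u > 0$ throughout $\Omega$.

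For the second claim, I would iterate: $d_\zeta m$ fails to be real on $\mathbb{R}$ when $\zeta$ is complex, but the conjugation identity $\overline{d_\zeta m(z)} = d_{\bar\zeta}m(z)$ on $z\in\mathbb{R}$ (a direct consequence of $m = \bar m$ and the definition of $d_\zeta$) ensures that $d_{\bar\zeta}d_\zeta m$ is real on $\mathbb{R}\setminus\Sigma_{\lambda_0}$. Equivalently, Lemma \ref{l13-1} together with the cocycle property of Lemma \ref{l8} interprets $d_{\bar\zeta}d_\zeta$ as the action of the real positive rational function $r_\zeta = q_\zeta q_{\bar\zeta}$ on the underlying symbol, so the composite falls back into the real-coefficient setting. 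One then reruns the boundary, asymptotic, and minimum-principle argument on the composite to obtain $\operatorname{Im} d_{\bar\zeta}d_\zeta m > 0$ on $\Omega$.

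The main obstacle is the boundary-value analysis on the curved and spectral parts of $\partial\Omega$ combined with the case split by the positions of $z$ and $\zeta$ (each inside or outside the unit disk), since the Herglotz representation of $F$ differs across the unit circle and the sign of $m(0)-m(x)$ is not a priori fixed; in each of the four position cases one has to verify the non-negativity of the relevant boundary values by carefully tracking signs in the explicit Stieltjes expressions for $F$. A secondary technical difficulty is the extension of $d_x m$ across the interior point $z=0$, where $\phi$ has a pole but the full expression must remain finite through cancellations in the formula, and across the corner points $\pm 1,\pm\ell,\pm\ell^{-1}$ where the components of $\partial\Sigma_{\lambda_0}$ meet the real axis.
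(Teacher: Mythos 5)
Your proposal takes a genuinely different route from the paper (a boundary minimum-principle argument on the two components of $\mathbb{C}_{+}\backslash\Sigma_{\lambda_{0}}$, versus the paper's direct interior computation), but as written it has a real gap precisely at the step you yourself flag as ``the main obstacle'': the verification that $\operatorname{Im}d_{x}m\geq0$ on the spectral part of the boundary. On $\Sigma_{\lambda_{0}}$ the function $m$ need not extend continuously at all --- the measures $\sigma_{\pm}$ of Lemma \ref{l17} can have point masses or singular continuous parts, so near such boundary points $m$ may blow up or oscillate, and the quantity $\operatorname{Im}d_{x}m=\operatorname{Im}\phi+\left(  m(x)-m(0)\right)  \operatorname{Im}\bigl(\frac{m(z)-m(x)}{\phi(z)-\phi(x)}\bigr)^{-1}$ involves a \emph{reciprocal} of $m$-differences whose boundary liminf is not controlled merely by knowing that $\operatorname{Im}F\geq0$ at the boundary. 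The paper sidesteps this entirely: it never takes boundary values, but instead substitutes the Stieltjes representations of Lemma \ref{l17} into the interior and reads off the sign of $\operatorname{Im}\frac{m(z)-m(x)}{\phi(z)-\phi(x)}=a_{1}^{2}\operatorname{Im}\phi(z)\int\frac{\sigma_{+}(d\lambda)}{\left\vert \lambda-\phi(z)\right\vert ^{2}\left(  \lambda-\phi(x)\right)  }$ (and, for the iterated transform, the sign of the product in (\ref{41}) via Cauchy--Schwarz). If you want to keep the minimum-principle framing you would still have to carry out essentially this computation to control the liminf at every point of $\Sigma_{\lambda_{0}}$, so nothing is saved.

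A second, independent problem concerns the iterated transform $d_{\overline{\zeta}}d_{\zeta}m$. Your fallback of interpreting $d_{\overline{\zeta}}d_{\zeta}$ as the action of $r_{\zeta}=q_{\zeta}q_{\overline{\zeta}}$ on a symbol and concluding that ``the composite falls back into the real-coefficient setting'' is circular in the logic of the paper: Lemma \ref{la2} is an input to the proof of Lemma \ref{l18}(iii), which is exactly what establishes that $m_{r_{\zeta}\boldsymbol{a}}$ lies in $M_{\lambda_{0}}$. The appendix lemma must be proved purely function-theoretically from the representation of Lemma \ref{l17}. (Reality of $d_{\overline{\zeta}}d_{\zeta}m$ on $\mathbb{R}\backslash\Sigma_{\lambda_{0}}$ can indeed be checked directly from $m=\overline{m}$ and the conjugation identity $\overline{d_{\zeta}m}=d_{\overline{\zeta}}m$, but that alone does not give you the boundary inequality on $\Sigma_{\lambda_{0}}$ for the composite, whose intermediate function $d_{\zeta}m$ has complex coefficients and no a priori positivity.) To close the argument you should follow the paper's scheme: derive the identity (\ref{41}) expressing $\operatorname{Im}\left(  d_{\overline{\zeta}}d_{\zeta}m(z)-\phi(z)\right)  $ as a product of the two factors $\left\vert m(0)-m(\zeta)\right\vert ^{2}+m^{\prime}(0)Y$ and $Y\operatorname{Im}m(z)-\left\vert X\right\vert ^{2}\operatorname{Im}\phi(z)$, and determine their signs in each of the position cases by Cauchy--Schwarz applied to the Stieltjes integrals.
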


\begin{proof}
Note%
\begin{equation}
d_{\overline{\zeta}}d_{\zeta}m\left(  z\right)  =\left(  d_{\zeta}m\left(
0\right)  -d_{\zeta}m\left(  \overline{\zeta}\right)  \right)  \left(
1-\dfrac{\phi\left(  z\right)  -\phi\left(  \overline{\zeta}\right)
}{d_{\zeta}m\left(  z\right)  -d_{\zeta}m\left(  \overline{\zeta}\right)
}\right)  +\phi\left(  z\right)  \text{.} \label{39}%
\end{equation}
Observe%
\begin{equation}
d_{\zeta}m\left(  z\right)  =\left(  m\left(  0\right)  -m\left(
\zeta\right)  \right)  \left(  1+\dfrac{\phi\left(  \zeta\right)  }{m\left(
z\right)  -m\left(  \zeta\right)  }\right)  +z\phi\left(  z\right)
\dfrac{z^{-1}\left(  m\left(  z\right)  -m\left(  0\right)  \right)
}{m\left(  z\right)  -m\left(  \zeta\right)  }\text{,} \label{42}%
\end{equation}
and%
\[
\left(  d_{\zeta}m\right)  \left(  0\right)  =m\left(  0\right)  -m\left(
\zeta\right)  +\phi\left(  \zeta\right)  +\dfrac{m^{\prime}\left(  0\right)
}{m\left(  0\right)  -m\left(  \zeta\right)  }\text{.}%
\]
Hence%
\begin{equation}
d_{\zeta}m\left(  0\right)  -d_{\zeta}m\left(  \overline{\zeta}\right)
=\dfrac{\phi\left(  \overline{\zeta}\right)  -\phi\left(  \zeta\right)
}{m\left(  \overline{\zeta}\right)  -m\left(  \zeta\right)  }\left(  m\left(
0\right)  -m\left(  \overline{\zeta}\right)  \right)  +\dfrac{m^{\prime
}\left(  0\right)  }{m\left(  0\right)  -m\left(  \zeta\right)  } \label{40}%
\end{equation}
holds. For simplicity of notations set%
\[
X=\dfrac{m\left(  z\right)  -m\left(  \overline{\zeta}\right)  }{\phi\left(
z\right)  -\phi\left(  \overline{\zeta}\right)  }\text{, \ }Y=\dfrac{m\left(
\overline{\zeta}\right)  -m\left(  \zeta\right)  }{\phi\left(  \overline
{\zeta}\right)  -\phi\left(  \zeta\right)  }\text{.}%
\]
Then, one has%
\[
\dfrac{\phi\left(  z\right)  -\phi\left(  \zeta\right)  }{m\left(  z\right)
-m\left(  \zeta\right)  }=\dfrac{\phi\left(  z\right)  -\phi\left(
\zeta\right)  }{\left(  \phi\left(  z\right)  -\phi\left(  \overline{\zeta
}\right)  \right)  X+m\left(  \overline{\zeta}\right)  -m\left(  \zeta\right)
}\text{,}%
\]
and%
\begin{align*}
&  d_{\zeta}m\left(  z\right)  -d_{\zeta}m\left(  \overline{\zeta}\right) \\
&  =\left(  m\left(  0\right)  -m\left(  \zeta\right)  \right)  \left(
1-\dfrac{\phi\left(  z\right)  -\phi\left(  \zeta\right)  }{m\left(  z\right)
-m\left(  \zeta\right)  }\right)  +\phi\left(  z\right) \\
&  \text{ \ \ \ \ \ \ \ \ \ \ }-\left(  m\left(  0\right)  -m\left(
\zeta\right)  \right)  \left(  1-\dfrac{\phi\left(  \overline{\zeta}\right)
-\phi\left(  \zeta\right)  }{m\left(  \overline{\zeta}\right)  -m\left(
\zeta\right)  }\right)  -\phi\left(  \overline{\zeta}\right) \\
&  =\left(  \dfrac{\left(  m\left(  0\right)  -m\left(  \zeta\right)  \right)
\left(  X-Y\right)  }{Y\left(  \left(  \phi\left(  z\right)  -\phi\left(
\overline{\zeta}\right)  \right)  X+\left(  \phi\left(  \overline{\zeta
}\right)  -\phi\left(  \zeta\right)  \right)  Y\right)  }+1\right)  \left(
\phi\left(  z\right)  -\phi\left(  \overline{\zeta}\right)  \right)  \text{.}%
\end{align*}
From (\ref{39}), (\ref{40})%
\begin{align*}
&  d_{\overline{\zeta}}d_{\zeta}m\left(  z\right)  -\phi\left(  z\right) \\
&  =\left(  Y^{-1}\left(  m\left(  0\right)  -m\left(  \overline{\zeta
}\right)  \right)  +\dfrac{m^{\prime}\left(  0\right)  }{m\left(  0\right)
-m\left(  \zeta\right)  }\right)  \left(  1-\dfrac{\phi\left(  z\right)
-\phi\left(  \overline{\zeta}\right)  }{d_{\zeta}m\left(  z\right)  -d_{\zeta
}m\left(  \overline{\zeta}\right)  }\right) \\
&  =\dfrac{Y^{-1}\left\vert m\left(  0\right)  -m\left(  \zeta\right)
\right\vert ^{2}+m^{\prime}\left(  0\right)  }{m\left(  0\right)  -m\left(
\zeta\right)  +Y\dfrac{\left(  \phi\left(  z\right)  -\phi\left(
\overline{\zeta}\right)  \right)  X+\left(  \phi\left(  \overline{\zeta
}\right)  -\phi\left(  \zeta\right)  \right)  Y}{X-Y}}%
\end{align*}
follows. Now we compute the imaginary part of%
\begin{align*}
A  &  \equiv-m\left(  \zeta\right)  +Y\dfrac{\left(  \phi\left(  z\right)
-\phi\left(  \overline{\zeta}\right)  \right)  X+\left(  \phi\left(
\overline{\zeta}\right)  -\phi\left(  \zeta\right)  \right)  Y}{X-Y}\\
&  =-m\left(  \zeta\right)  -Y\overline{\phi\left(  \zeta\right)  }%
+Y\dfrac{\phi\left(  z\right)  X-\phi\left(  \zeta\right)  Y}{X-Y}\text{.}%
\end{align*}
Note that $Y\in\mathbb{R}$ and $\operatorname{Im}m\left(  \zeta\right)
=Y\operatorname{Im}\phi\left(  \zeta\right)  $. Then,%
\begin{align*}
\operatorname{Im}A  &  =Y\operatorname{Im}\dfrac{\phi\left(  z\right)
X-\phi\left(  \zeta\right)  Y}{X-Y}=Y\dfrac{\operatorname{Im}\left(
\phi\left(  z\right)  X-\phi\left(  \zeta\right)  Y\right)  \left(
\overline{X}-Y\right)  }{\left\vert X-Y\right\vert ^{2}}\\
&  =Y\dfrac{\left\vert X\right\vert ^{2}\operatorname{Im}\phi\left(  z\right)
-Y\operatorname{Im}m\left(  z\right)  }{\left\vert X-Y\right\vert ^{2}}%
\end{align*}
holds. Therefore, we have%
\begin{align}
\operatorname{Im}\left(  d_{\overline{\zeta}}d_{\zeta}m\left(  z\right)
-\phi\left(  z\right)  \right)   &  =\operatorname{Im}\dfrac{Y^{-1}\left\vert
m\left(  0\right)  -m\left(  \zeta\right)  \right\vert ^{2}+m^{\prime}\left(
0\right)  }{m\left(  0\right)  +A}\nonumber\\
&  =\dfrac{\left\vert m\left(  0\right)  -m\left(  \zeta\right)  \right\vert
^{2}+m^{\prime}\left(  0\right)  Y}{\left\vert m\left(  0\right)
+A\right\vert ^{2}}\dfrac{Y\operatorname{Im}m\left(  z\right)  -\left\vert
X\right\vert ^{2}\operatorname{Im}\phi\left(  z\right)  }{\left\vert
X-Y\right\vert ^{2}}\text{.} \label{41}%
\end{align}
Without loss of generality one can assume $\operatorname{Im}\zeta>0$. Hence
$\zeta$, $z$ $\in\mathbb{C}_{+}\backslash\left\{  \left\vert z\right\vert
=1\right\}  $, and this is divided by 2 connected domains:%
\[
\mathbb{C}_{+}\backslash\left\{  \left\vert z\right\vert =1\right\}  =\left(
\mathbb{C}_{+}\cap\left\{  \left\vert z\right\vert >1\right\}  \right)
\cup\left(  \mathbb{C}_{+}\cap\left\{  \left\vert z\right\vert <1\right\}
\right)  \text{.}%
\]
The situation is different depending on the domain where $\zeta$, $z$ belong.
If they are elements of the same domain, they can be expressed by a single
$m_{+}$ or $m_{-}$, but if they belong to distinct domains, they have
expressions by distinct $m_{\pm}$.\medskip\newline1) $\zeta$, $z\in
\mathbb{C}_{+}\cap\left\{  \left\vert z\right\vert >1\right\}  $ or $\zeta$,
$z\in\mathbb{C}_{+}\cap\left\{  \left\vert z\right\vert <1\right\}  $.

To fix our idea we consider the case $\zeta$, $z\in\mathbb{C}_{+}\cap\left\{
\left\vert z\right\vert >1\right\}  $. The other case can be treated
similarly. Setting $\phi\left(  z\right)  =w$, $\phi\left(  \zeta\right)  =b$,
one has from Lemma \ref{l17}%
\[
X=\dfrac{m\left(  \phi^{-1}\left(  w\right)  \right)  -\overline{m\left(
\phi^{-1}\left(  b\right)  \right)  }}{w-\overline{b}}=1+a_{1}^{2}%
{\displaystyle\int_{-\lambda_{0}}^{\lambda_{0}}}
\dfrac{\sigma_{+}\left(  d\lambda\right)  }{\left(  \lambda-w\right)  \left(
\lambda-\overline{b}\right)  }\text{.}%
\]
Then, Cauchy-Schwarz inequality shows%
\begin{align*}
\left\vert X\right\vert ^{2}  &  =\left\vert 1+a_{1}^{2}%
{\displaystyle\int_{-\lambda_{0}}^{\lambda_{0}}}
\dfrac{\sigma_{+}\left(  d\lambda\right)  }{\left(  \lambda-w\right)  \left(
\lambda-\overline{b}\right)  }\right\vert ^{2}\\
&  \leq\left(  1+%
{\displaystyle\int_{-\lambda_{0}}^{\lambda_{0}}}
\dfrac{a_{1}^{2}\sigma_{+}\left(  d\lambda\right)  }{\left\vert \lambda
-w\right\vert ^{2}}\right)  \left(  1+%
{\displaystyle\int_{-\lambda_{0}}^{\lambda_{0}}}
\dfrac{a_{1}^{2}\sigma_{+}\left(  d\lambda\right)  }{\left\vert \lambda
-b\right\vert ^{2}}\right)  =Y\dfrac{\operatorname{Im}m\left(  z\right)
}{\operatorname{Im}\phi\left(  z\right)  }\text{,}%
\end{align*}
hence, noting $m^{\prime}(0)=a_{0}^{2}%
{\displaystyle\int_{-\lambda_{0}}^{\lambda_{0}}}
\sigma_{-}\left(  d\lambda\right)  >0$, one has for $z\in\mathbb{C}_{+}%
\cap\left\{  \left\vert z\right\vert >1\right\}  $%
\[
\operatorname{Im}d_{\overline{\zeta}}d_{\zeta}m\left(  z\right)
\geq\operatorname{Im}\phi\left(  z\right)  >0\text{.}%
\]
\newline2) $\zeta\in\mathbb{C}_{+}\cap\left\{  \left\vert z\right\vert
<1\right\}  $, $\ z\in\mathbb{C}_{+}\cap\left\{  \left\vert z\right\vert
>1\right\}  $.

In this case $\operatorname{Im}\phi\left(  \zeta\right)  <0,$
$\operatorname{Im}m\left(  \zeta\right)  $, $\operatorname{Im}\phi\left(
z\right)  ,$ $\operatorname{Im}m\left(  z\right)  >0$ hold. Therefore, one see%
\[
\left\vert m\left(  \zeta\right)  -m(0)\right\vert ^{2}=\left\vert
{\displaystyle\int_{-\lambda_{0}}^{\lambda_{0}}}
\dfrac{a_{0}^{2}\sigma_{-}\left(  d\lambda\right)  }{\lambda-\phi\left(
\zeta\right)  }\right\vert ^{2}\leq m^{\prime}(0)%
{\displaystyle\int_{-\lambda_{0}}^{\lambda_{0}}}
\dfrac{a_{0}^{2}\sigma_{-}\left(  d\lambda\right)  }{\left\vert \lambda
-\phi\left(  \zeta\right)  \right\vert ^{2}}=m^{\prime}(0)\dfrac
{\operatorname{Im}m\left(  \zeta\right)  }{-\operatorname{Im}\phi\left(
\zeta\right)  }\text{,}%
\]
which implies%
\[
\left\vert m\left(  0\right)  -m\left(  \zeta\right)  \right\vert
^{2}+m^{\prime}\left(  0\right)  Y\leq0\text{,}%
\]
hence (\ref{41}) shows%
\[
\operatorname{Im}\left(  d_{\overline{\zeta}}d_{\zeta}m\left(  z\right)
-\phi\left(  z\right)  \right)  \geq0\text{.}%
\]
Now%
\[
\operatorname{Im}\left(  d_{\overline{\zeta}}d_{\zeta}m\left(  z\right)
\right)  =\operatorname{Im}\left(  d_{\overline{\zeta}}d_{\zeta}m\left(
z\right)  -\phi\left(  z\right)  \right)  +\operatorname{Im}\phi\left(
z\right)  >0
\]
holds, if $z\in\mathbb{C}_{+}\cap\left\{  \left\vert z\right\vert >1\right\}
$.\medskip\newline3) $\zeta\in\mathbb{C}_{+}\cap\left\{  \left\vert
z\right\vert >1\right\}  $, $\ z\in\mathbb{C}_{+}\cap\left\{  \left\vert
z\right\vert <1\right\}  $.

In this case $\operatorname{Im}\phi\left(  z\right)  <0$, $\operatorname{Im}%
\phi\left(  \zeta\right)  ,$ $\operatorname{Im}m\left(  \zeta\right)  $,
$\operatorname{Im}m\left(  z\right)  >0$ hold. Therefore, $Y>0$ and (\ref{41})
show%
\[
\operatorname{Im}\left(  d_{\overline{\zeta}}d_{\zeta}m\left(  z\right)
-\phi\left(  z\right)  \right)  \geq0\text{,}%
\]
which implies%
\[
-\left(  \left(  d_{\overline{\zeta}}d_{\zeta}m\right)  \left(  \phi
^{-1}\left(  w\right)  \right)  -w\right)  =\alpha+\beta w+%
{\displaystyle\int_{-\infty}^{\infty}}
\left(  \dfrac{1}{\lambda-w}-\dfrac{\lambda}{\lambda^{2}+1}\right)
\sigma\left(  d\lambda\right)
\]
with $w=\phi\left(  z\right)  \in\mathbb{C}_{-}$. Since (\ref{42}) implies
$d_{\zeta}m$ is analytic in a neighborhood of $0$, so does $d_{\overline
{\zeta}}d_{\zeta}m$, which means $d_{\overline{\zeta}}d_{\zeta}m\left(
0\right)  $ exists finitely. Therefore, one has%
\[
\left(  d_{\overline{\zeta}}d_{\zeta}m\right)  \left(  \phi^{-1}\left(
w\right)  \right)  =O\left(  1\right)  \text{ \ as \ }w\rightarrow\infty\text{
in }\mathbb{C}_{-}\text{,}%
\]
and hence $\beta=1$, which shows%
\[
-\left(  d_{\overline{\zeta}}d_{\zeta}m\right)  \left(  z\right)  =\alpha+%
{\displaystyle\int_{-\infty}^{\infty}}
\left(  \dfrac{1}{\lambda-\phi\left(  z\right)  }-\dfrac{\lambda}{\lambda
^{2}+1}\right)  \sigma\left(  d\lambda\right)  \text{.}%
\]
This proves $\operatorname{Im}d_{\overline{\zeta}}d_{\zeta}m\left(  z\right)
>0$ in any case.

The proof of $\operatorname{Im}d_{x}m\left(  z\right)  >0$ proceeds similarly
to the above. Here we present the proof only in 2 cases. Suppose
$x\in\mathbb{R}\cap\left\{  \left\vert z\right\vert >1\right\}  $,
$\ z\in\mathbb{C}_{+}\cap\left\{  \left\vert z\right\vert >1\right\}  $. The
definition of $d_{x}m$ implies%
\begin{equation}
\operatorname{Im}\left(  d_{x}m\left(  z\right)  -\phi\left(  z\right)
\right)  =\left(  m\left(  x\right)  -m\left(  0\right)  \right)
\operatorname{Im}\left(  \dfrac{m\left(  z\right)  -m\left(  x\right)  }%
{\phi\left(  z\right)  -\phi\left(  x\right)  }\right)  ^{-1}\text{,}
\label{43}%
\end{equation}
hence the imaginary part is%
\begin{align*}
\operatorname{Im}\dfrac{m\left(  z\right)  -m\left(  x\right)  }{\phi\left(
z\right)  -\phi\left(  x\right)  }  &  =\operatorname{Im}\left(  1+a_{1}%
^{2}\int_{-\lambda_{0}}^{\lambda_{0}}\dfrac{\sigma_{+}\left(  d\lambda\right)
}{\left(  \lambda-\phi\left(  z\right)  \right)  \left(  \lambda-\phi\left(
x\right)  \right)  }\right) \\
&  =a_{1}^{2}\operatorname{Im}\phi\left(  z\right)  \int_{-\lambda_{0}%
}^{\lambda_{0}}\dfrac{\sigma_{+}\left(  d\lambda\right)  }{\left\vert
\lambda-\phi\left(  z\right)  \right\vert ^{2}\left(  \lambda-\phi\left(
x\right)  \right)  }\text{.}%
\end{align*}
If $x>0$, then $\lambda-\phi\left(  x\right)  <0$ and
\[
m\left(  x\right)  -m\left(  0\right)  =m(x)-m(x^{-1})+m(x^{-1})-m\left(
0\right)  >0
\]
due to $m(x^{-1})-m\left(  0\right)  >0$ ($m^{\prime}(x)>0$), $m(x)-m(x^{-1}%
)>0$. Therefore, we have $\operatorname{Im}\left(  d_{x}m\left(  z\right)
-\phi\left(  z\right)  \right)  >0$, which yields $\operatorname{Im}%
d_{x}m\left(  z\right)  >0$, since $\operatorname{Im}\phi\left(  z\right)
>0$. The case $x<0$ can be treated similarly.

Suppose $x\in\mathbb{R}\cap\left\{  \left\vert z\right\vert <1\right\}  $,
$\ z\in\mathbb{C}_{+}\cap\left\{  \left\vert z\right\vert >1\right\}  $. Then,
in the expression (\ref{43}) we decompose%
\begin{align*}
\dfrac{m\left(  z\right)  -m\left(  x\right)  }{\phi\left(  z\right)
-\phi\left(  x\right)  }  &  =\dfrac{m\left(  z\right)  -m\left(
x^{-1}\right)  }{\phi\left(  z\right)  -\phi\left(  x^{-1}\right)  }%
+\dfrac{m\left(  x^{-1}\right)  -m\left(  x\right)  }{\phi\left(  z\right)
-\phi\left(  x\right)  }\\
&  =1+a_{1}^{2}\int_{-\lambda_{0}}^{\lambda_{0}}\dfrac{\sigma_{+}\left(
d\lambda\right)  }{\left(  \lambda-\phi\left(  z\right)  \right)  \left(
\lambda-\phi\left(  x^{-1}\right)  \right)  }+\dfrac{m\left(  x^{-1}\right)
-m\left(  x\right)  }{\phi\left(  z\right)  -\phi\left(  x\right)  }\text{,}%
\end{align*}
which yields $\operatorname{Im}\dfrac{m\left(  z\right)  -m\left(  x\right)
}{\phi\left(  z\right)  -\phi\left(  x\right)  }<0$ if $x>0$, and
$\operatorname{Im}d_{x}m\left(  z\right)  >0$. The rest of the cases can be
computed similarly and one can show $\operatorname{Im}d_{x}m\left(  z\right)
>0$ in any case.\medskip
\end{proof}

\noindent\textbf{Acknowledgement} \ \textit{The authors would like to
appreciate Professor Yiqian Wang for his helpful discussions. This work is
supported by the NSF of China (Grants 12271245).}


\begin{thebibliography}{9}                                                                                                %


\bibitem {k}S. Kotani: \textit{Construction of KdV flow: a unified approach},
Peking Math. Jour. \textbf{6(2)} (2023), 469-558.

\bibitem {or}D. C. Ong - C. Remling: \textit{Generalized Toda flows}, Trans.
Amer. Math. Soc. \textbf{371} (2019), 5069-5081.

\bibitem {r}C. Remling: \textit{Toda maps, cocycles, and canonical systems},
J. Spectr. Theory \textbf{9}(4) (2019), 1327--1365.

\bibitem {sa}M. Sato: \textit{Soliton equations as dynamical systems on
infinite dimensional Grassmann manifold}, Suriken Koukyuroku \textbf{439}
(1981), 30-46 (http://www.kurims.kyoto-u.ac.jp/en/publi-01.html).

\bibitem {sew}G. Segal - G. Wilson: \textit{Loop groups and equation of KdV
type}, Publ. IHES, \textbf{61} (1985), 5-65.

\bibitem {si}B. Simon: \textit{Notes on Infinite Determinant of Hilbert Space
Operators}, Advances in Math. \textbf{24} (1977), 244-273.

\bibitem {t}G. Teschl: \textit{Jacobi operators and completely integrable
nonlinear lattices}, Math. Surveys and Monographs, Amer. Math. Soc.
\textbf{72}(2000).

\bibitem {v}J. Verdera: $L^{2}$\textit{-boundedness of the Cauchy integral and
Menger curvature}, Contemp. Math. \textbf{277} (2002), 139-158.
\end{thebibliography}
\end{document}